\documentclass[a4paper]{amsart}

\usepackage{amsmath,amssymb,amsbsy}
\usepackage{amsthm}
\usepackage{tikz}
\usepackage[all]{xy}
 \usepackage{verbatim}
\usepackage[toc,page]{appendix}
\usepackage{hyperref}
\usepackage{color, framed}
\hypersetup{urlcolor=blue, citecolor=red}

\newtheorem{theorem}{Theorem}[section]
\newtheorem{proposition}[theorem]{Proposition}
\newtheorem{corollary}[theorem]{Corollary}
\newtheorem{lemma}[theorem]{Lemma}
\newtheorem{main lemma}[theorem]{Main Lemma}


\theoremstyle{definition}

\newtheorem{remark}{Remark}

\newcommand{\R}{\mathbb{R}}

\def\eps{\varepsilon}
\def\pa{\partial}

\DeclareMathOperator{\loc}{loc}
\DeclareMathOperator{\interior}{int}

\DeclareMathOperator{\pv}{P.V.}

\title[Overdetermined fractional BVP in exterior sets]{Overdetermined problems for the fractional Laplacian in exterior and annular sets}
\author{Nicola Soave and Enrico Valdinoci}

\address{
\hbox{\parbox{5.7in}{\medskip\noindent
 Nicola Soave\\
Mathematisches Institut, Justus-Liebig-Universit\"at Giessen, \\
Arndtstrasse 2, 35392 Giessen, Germany,\\[2pt]
{\em{E-mail address: }}{\tt nicola.soave@gmail.com, nicola.soave@math.uni-giessen.de.}\\[5pt]
Enrico Valdinoci\\
Weiestrass Institut f\"ur Angewandte Analysis und Stochastik, \\
Mohrenstrasse 39, 10117 Berlin, Germany, \\[2pt] 
and \\[2pt]
Dipartimento di Matematica, Universit\`a degli studi di Milano, \\
Via Saldini 50, 20133 Milan, Italy, \\[2pt]
and \\[2pt]
Istituto di Matematica Applicata e Tecnologie Informatiche,
Consiglio Nazionale delle Ricerche, \\
Via Ferrata 1, 27100 Pavia, Italy. \\[2pt]
{\em{E-mail address: }}{\tt enrico.valdinoci@wias-berlin.de}}}}


\thanks{
The first author is partially supported through the project ERC Advanced Grant 2013 n. 339958 ``Complex Patterns for Strongly Interacting Dynamical Systems - COMPAT". The second author is partially supported through 
the project ERC Starting Grant 2011 n. 277749 ``Elliptic PDEs and Symmetry of Interfaces and Layers for Odd Nonlinearities - EPSILON" and
and the project PRIN Grant 201274FYK7
``Aspetti variazionali e
perturbativi nei problemi differenziali nonlineari''. }


\begin{document}

\begin{abstract}
We consider a fractional elliptic equation
in an unbounded set with both Dirichlet and fractional normal derivative datum
prescribed. We prove that the domain and the solution are necessarily
radially symmetric. 

The extension of the result in bounded non-convex regions is also studied, as well as the radial symmetry of the solution when the set is a priori supposed to be rotationally symmetric.
\end{abstract}

\keywords{Rigidity and classification results, fractional Laplacian, unbounded domains, overdetermined problems}
\subjclass[2010]{35N25, 35R11, 35A02}

\maketitle

\section{Introduction}

In the present paper we consider overdetermined problems for the fractional Laplacian in unbounded exterior sets or bounded annular sets. 
Different cases will be taken into account,
but the results obtained will lead in any case to 
the classification of the solution and of
the domain, that will be shown to possess rotational symmetry.

The notation used in this paper will be the following.
Given an open set whose boundary is of class~$\mathcal{C}^2$,
we denote by~$\nu$ the inner unit normal vector and
for any~$x_0$ on the boundary of such set, we use the notation
\begin{equation}\label{def: s-derivative}
(\pa_{\nu})_s u(x_0) := \lim_{t \to 0^+} \frac{u(x_0 + t \nu(x_0))-u(x_0)}{t^s}.
\end{equation}
Of course, when writing such limit, we always assume that the limit
indeed exists and we call the above quantity
the \emph{inner normal $s$-derivative} of $u$ at~$x_0$.
The parameter~$s$ above ranges in the interval~$(0,1)$
and it corresponds to the fractional Laplacian~$(-\Delta)^s$.
A brief summary of the basic theory of the fractional Laplacian will
be provided in Section~\ref{sec: preliminaries}: for the moment,
we just remark that~$(-\Delta)^s$ reduces to (minus) the classical
Laplacian as~$s\to 1$, and the quantity in~\eqref{def: s-derivative}
becomes in this case the classical Neumann condition along
the boundary.\medskip

With this setting, we are ready to state our results.
For the sake of clarity, we first give some simplified versions
of our results which are ``easy to read'' and deal with ``concrete''
situations. These results will be indeed just particular
cases of more general theorems that will be presented
later in Section~\ref{FFGG}.

More precisely, the results we present are basically of two types.
The first type deals with \emph{exterior sets}.
In this case, the equation is assumed to
hold in~$\R^N \setminus \overline{G}$,
where~$G$
is a non-empty open bounded set
of $\R^N$, not necessarily connected, whose boundary is
of class~$\mathcal{C}^2$, and~$\overline{G}$ denotes the closure of $G$. 
We sometimes split $G$ into its connected components by writing
\[
G= \bigcup_{i=1}^k G_i,
\]
where any $G_i$ is a bounded, open and connected set
of class~$\mathcal{C}^2$, and, to avoid pathological situations, we suppose that $k$ is finite. Notice that $\overline{G_i} \cap \overline{G_j} = \emptyset$ if $i \neq j$.
\medskip

Then, the prototype of our results for exterior sets is the following.

\begin{theorem}\label{thm: example}
Let us assume that there exists $u \in \mathcal{C}^s(\R^N)$ such that 
\[
\begin{cases}
(-\Delta)^s u =0 & \text{in $\R^N \setminus \overline{G}$} \\
u = a>0  & \text{in $\overline{G}$} \\
u(x) \to 0 & \text{as $|x| \to +\infty$} \\
(\pa_{\nu})_s u= const.= \alpha_i \in \R & \text{on $\pa G_i$}.
\end{cases} 
\]
Then $G$ is a ball, and $u$ is radially symmetric and radially decreasing with respect to the centre of $G$.
\end{theorem}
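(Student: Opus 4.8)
The plan is to use the moving planes method, in the fractional setting, as developed by Serrin for the classical Laplacian and adapted to $(-\Delta)^s$ via the Caffarelli–Silvestre extension (or via the direct integro-differential maximum principles). First I would recall that, because $u$ solves $(-\Delta)^s u = 0$ in $\R^N\setminus\overline G$ with $u=a>0$ on $\overline G$ and $u(x)\to 0$ at infinity, the maximum principle gives $0<u<a$ in the exterior domain. I would then fix a direction, say $e_1$, and for $\lambda\in\R$ consider the hyperplane $T_\lambda=\{x_1=\lambda\}$, the reflection $x\mapsto x^\lambda$ across $T_\lambda$, and the set $\Sigma_\lambda$ of points in the exterior domain lying on one side of $T_\lambda$; one compares $u$ with $u_\lambda(x):=u(x^\lambda)$. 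Starting from $\lambda$ very negative (so that $\Sigma_\lambda$ is empty, using that $G$ is bounded) and decreasing the plane, one shows the comparison $u\le u_\lambda$ persists as long as the reflected cap stays in the complement of $\overline G$; the decay at infinity is what makes the process start, replacing the role that boundedness of the domain plays in Serrin's original argument.

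The next step is the crucial one: analyzing what happens at the critical value $\lambda_0$ where the moving plane first touches $G$ or where the reflected normal becomes tangent. Here I would invoke the fractional Hopf lemma / strong maximum principle together with the overdetermined condition $(\partial_\nu)_s u=\alpha_i$ on each $\partial G_i$. If $w:=u_{\lambda_0}-u\ge 0$ does not vanish identically on the reflected cap, the strong maximum principle forces $w>0$ in the interior, and then the fractional Hopf lemma applied at a suitable boundary point of $G$ (where the reflected cap is internally tangent, or where the plane is orthogonal to $\partial G$) contradicts the equality of the $s$-normal derivatives — exactly as in Serrin's two configurations (the tangency at a regular boundary point and the tangency at a point on $T_{\lambda_0}$ where one must use a second-order/corner version of Hopf). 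Therefore $w\equiv 0$, i.e. $u$ is symmetric with respect to $T_{\lambda_0}$, and consequently $G$ is symmetric with respect to $T_{\lambda_0}$ as well (the level set $\{u=a\}$ in the complement is symmetric, hence so is its complement $\overline G$).

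Since the direction $e_1$ was arbitrary, $u$ and $G$ are symmetric with respect to a hyperplane in every direction. A standard argument then shows that all these hyperplanes pass through a common point $x_c$, that $u$ and $G$ are symmetric under every reflection fixing $x_c$, hence rotationally symmetric about $x_c$; in particular $G$ is a ball centered at $x_c$. Monotonicity in the radial variable, i.e. that $u$ is radially decreasing, follows from the moving planes argument itself: for $\lambda$ strictly between $\lambda_0$ and its value at infinity one has $u<u_\lambda$ strictly in the interior of the cap, which translates into strict monotonicity of $u$ along rays from $x_c$.

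The main obstacle I expect is not the symmetrization mechanism but the rigorous justification of the fractional Hopf lemma and strong maximum principle \emph{up to the boundary of $G$} in the two tangency configurations — in particular the corner/second-derivative version needed when the critical plane is orthogonal to $\partial G$ — together with verifying the requisite regularity of $u$ (some $\mathcal C^{s}$ or better, and $\mathcal C^{1,\alpha}$ or $\mathcal C^{2s+\alpha}$ estimates near $\partial G$) that makes the quantity $(\partial_\nu)_s u$ well-defined and makes these boundary-point lemmas applicable. A secondary technical point is controlling the behavior at infinity carefully enough (using the decay $u(x)\to 0$ and bounds on the Kelvin-type transform or on $u$ at infinity) so that the moving plane procedure can be initiated and the caps near infinity cause no trouble; this replaces compactness of the domain in the classical setting.
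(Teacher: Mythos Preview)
Your proposal is essentially the paper's approach: first reduce to $0<u<a$ and $\alpha_i<0$ via the maximum principle (since $f\equiv 0$ trivially satisfies $f(a)\le 0$ and $f\ge 0$, this is the content of the paper's Corollaries~\ref{corol: main 1}--\ref{corol: subharmonicity}), then run the moving planes using the anti-symmetric strong maximum principle and Hopf lemma of Fall--Jarohs (Proposition~\ref{STRONG}), together with the fractional Serrin corner lemma for the orthogonality case. One slip worth flagging: in an exterior domain the set $\Sigma_\lambda$ is \emph{never} empty---it is always an unbounded half-space minus a bounded reflected piece---so the starting step is not vacuous; the paper's Lemma~\ref{lem: moving initial} handles it by taking a hypothetical interior negative minimum of $w_\lambda$ and deriving a contradiction directly from the integral expression of $(-\Delta)^s$ and the sign of $c_\lambda$ (which uses the monotonicity of $f$ near $0$, trivially satisfied here), exactly the mechanism you allude to when you say the decay at infinity replaces boundedness.
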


The second case dealt with in this paper is the one
of \emph{annular sets}. Namely, in this case the equation is supposed
to hold in~$\Omega \setminus \overline{G}$,
where~$\Omega$ is a bounded set that contains~$G$
and such that~$\Omega \setminus \overline{G}$ is of class $\mathcal{C}^2$. 
\medskip

Then, the prototype of our results for annular sets is the following.

\begin{theorem}\label{thm: example 2}
Let us assume that there exists $u \in \mathcal{C}^s(\R^N)$ such that 
\[
\begin{cases}
(-\Delta)^s u =0 & \text{in $\Omega \setminus \overline{G}$} \\
u = a>0  & \text{in $\overline{G}$} \\
u= 0 & \text{in $\R^N \setminus \overline{\Omega}$} \\
(\pa_{\nu})_s u= const.= \alpha_i \in \R & \text{on $\pa G_i$} \\
(\pa_{\nu})_s u= const.= \beta \in \R & \text{on $\pa \Omega$},
\end{cases} 
\]
where $(\pa_\nu)_s u$ denotes the inner (with respect to $\Omega \setminus \overline{G}$) normal $s$-derivative of $u$. Then $G$ and $\Omega$ are concentric balls, and $u$ is radially symmetric and radially decreasing with respect to the centre of $G$.
\end{theorem}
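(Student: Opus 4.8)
\emph{Overview.} Theorem~\ref{thm: example 2} is a particular case of the general results of Section~\ref{FFGG}, which are proved by the method of moving planes; the plan for the present situation is as follows. One applies the moving planes in an arbitrary direction $e\in S^{N-1}$ and shows that $u$ --- hence also $\Omega$ and $G$ --- is symmetric with respect to a hyperplane orthogonal to $e$; letting $e$ vary then forces $G$ and $\Omega$ to be balls with a common centre and $u$ to be radial. Two facts are used repeatedly: by the maximum principle for $(-\Delta)^s$ in $\Omega\setminus\overline G$, where the exterior datum of $u$ ranges in $[0,a]$, one has $0<u<a$ in $\Omega\setminus\overline G$, so that $\{u=a\}=\overline G$ and $\overline{\{u>0\}}=\overline\Omega$; combining this with the Hopf-type lemma for $(-\Delta)^s$ recalled in Section~\ref{sec: preliminaries} yields also the sign information $\alpha_i<0<\beta$ for every $i$.

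\emph{Sweeping the planes.} Fix $e=e_N$ after a rotation and, for $\lambda\in\R$, let $T_\lambda=\{x_N=\lambda\}$, $H_\lambda=\{x_N>\lambda\}$, $x^\lambda=(x_1,\dots,x_{N-1},2\lambda-x_N)$, $u_\lambda(x)=u(x^\lambda)$ and $w_\lambda=u_\lambda-u$; the function $w_\lambda$ is antisymmetric with respect to $T_\lambda$. Set $\lambda_0=\max_{\overline\Omega}x_N$. Since $\overline G$ is a compact subset of the open set $\Omega$, one has $\max_{\overline G}x_N<\lambda_0$, and for $\lambda\in(\max_{\overline G}x_N,\lambda_0)$ the cap $\Sigma_\lambda:=H_\lambda\cap(\Omega\setminus\overline G)=H_\lambda\cap\Omega$ is a thin collar near the top of $\Omega$, whose reflection $\Sigma_\lambda^\ast$ across $T_\lambda$ satisfies $\overline{\Sigma_\lambda^\ast}\subset(\Omega\setminus\overline G)\cup T_\lambda$. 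Hence $(-\Delta)^s w_\lambda=0$ in $\Sigma_\lambda$, while $w_\lambda=u_\lambda\ge0$ on $H_\lambda\setminus\Sigma_\lambda$ because $u\equiv0$ there; the maximum principle for antisymmetric $s$-harmonic functions --- which, by antisymmetry, only needs the sign information on $H_\lambda\setminus\Sigma_\lambda$ --- gives $w_\lambda\ge0$ in $\Sigma_\lambda$. One may therefore set $\lambda^\ast:=\inf\{\mu: w_\lambda\ge0 \text{ in }\Sigma_\lambda \text{ for every }\lambda\in(\mu,\lambda_0)\}$, and the aim is to prove that $w_{\lambda^\ast}\equiv0$ in $\Sigma_{\lambda^\ast}$, i.e.\ that $T_{\lambda^\ast}$ is a symmetry hyperplane for $u$.

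\emph{Closing the argument; the main obstacle.} By continuity $w_{\lambda^\ast}\ge0$ in $\Sigma_{\lambda^\ast}$; if $w_{\lambda^\ast}\not\equiv0$ then the strong maximum principle gives $w_{\lambda^\ast}>0$ in the interior of $\Sigma_{\lambda^\ast}$. A standard continuity argument --- pushing $\lambda^\ast$ slightly further and absorbing the extra small piece with the antisymmetric maximum principle --- then shows that this is impossible \emph{unless} a geometric obstruction occurs at $\lambda^\ast$, namely the reflected configuration becomes internally tangent to $\pa\Omega$ or to $\pa G$ at a point $p\notin T_{\lambda^\ast}$. In the simplest cases --- the reflection of a portion of $\pa\Omega$ touching $\pa\Omega$, or the reflection of a portion of a component $\pa G_i$ touching the \emph{same} $\pa G_i$ --- the boundary value of $u$ at $p$ equals that at $p^{\lambda^\ast}$ (namely $0$, respectively $a$), so $w_{\lambda^\ast}(p)=0$ while $w_{\lambda^\ast}$ is $s$-harmonic and strictly one-signed near $p$ on the relevant side; since near $p$ the two boundary sheets are tangent mirror images, the Hopf lemma forces $(\pa_\nu)_s w_{\lambda^\ast}(p)$ to be at once strictly signed and equal to $\beta-\beta=0$ (respectively $\alpha_i-\alpha_i=0$), a contradiction, whence $w_{\lambda^\ast}\equiv0$. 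The \emph{main obstacle} is to deal with the remaining first tangencies --- the reflection of a portion of $\pa\Omega$ touching $\pa G$ (where $u$ takes \emph{different} boundary values at $p$ and $p^{\lambda^\ast}$, so that the Hopf argument does not even apply), and the reflection of a portion of $\pa G_i$ touching a distinct component $\pa G_j$ (where the Hopf lemma only yields $\alpha_i\neq\alpha_j$, which is not a priori excluded). Showing that these cannot be the first obstruction either, so that $w_{\lambda^\ast}\equiv0$ in all cases, requires a careful discussion of the admissible geometric configurations of the reflected cap at the critical position --- the nonconvex and annular analogue of the analysis of \cite{Sirakov} --- exploiting in an essential way that $0<u<a$ in $\Omega\setminus\overline G$, the signs $\alpha_i<0<\beta$, and comparison with the radial profile.

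\emph{Conclusion.} Granting this, $w_{\lambda^\ast}\equiv0$ in $\Sigma_{\lambda^\ast}$, so $T_{\lambda^\ast}$ is a symmetry hyperplane for $u$, hence for $\overline G=\{u=a\}$ and for $\overline\Omega=\overline{\{u>0\}}$, and therefore it contains the barycenters of both $G$ and $\Omega$. Repeating this for every $e\in S^{N-1}$, the two barycenters must coincide --- two distinct points cannot both lie on a hyperplane orthogonal to the line joining them --- at a single point $x_0$, with respect to which $G$ and $\Omega$ are invariant under every reflection; therefore $G$ and $\Omega$ are concentric balls centred at $x_0$ (in particular $G$ is connected), and $u$ is radially symmetric about $x_0$. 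Finally, the inequality $w_\lambda\ge0$ in $\Sigma_\lambda$ for every $\lambda>\lambda^\ast=x_0\cdot e$ and every direction $e$ shows that $u$ is nonincreasing along each ray issuing from $x_0$, that is, $u$ is radially decreasing.
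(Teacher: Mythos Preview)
Your overall plan---establishing $0<u<a$ in $\Omega\setminus\overline G$ via the maximum principle and then running the moving planes---is exactly the route the paper takes (Corollary~\ref{corol: main 2} reduces the statement to Theorem~\ref{thm: main 2}). However, there are two genuine gaps.

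First, and most seriously, you entirely omit the \emph{orthogonality} case at the critical position: besides internal tangency at a point $p\notin T_{\lambda^\ast}$, the motion of the hyperplane can also stop because $\langle \nu(x_0),e_N\rangle=0$ at some $x_0\in T_{\lambda^\ast}\cap(\partial G\cup\partial\Omega)$. This is the classical ``corner'' situation in Serrin-type problems, and in the fractional setting it requires the nonlocal Serrin corner lemma (Lemma~4.4 in~\cite{FallJarohs}); the Hopf argument you describe does not apply at such points. The paper treats this case explicitly (Case~2 in the conclusion of the proof of Theorem~\ref{thm: main 1}, reused verbatim for Theorem~\ref{thm: main 2}).

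Second, the ``main obstacle'' you flag and then explicitly leave unresolved is largely a phantom. In the paper's formulation the critical position is $\bar\lambda=\max\{\bar\lambda_G,\bar\lambda_\Omega\}$, each defined purely by the geometry of $G$ and of $\Omega$ separately; a first contact of the reflected $\partial\Omega$ with $\partial G$ is therefore \emph{not} a stopping criterion, since the relevant inclusions $(\overline G\cap H_\lambda)^\lambda\subset G$ and $(\overline\Omega\cap H_\lambda)^\lambda\subset\Omega$ are unaffected. As for a reflected $\partial G_i$ touching a distinct $\partial G_j$, this cannot occur at $\bar\lambda$ by \cite[Lemma~2.1]{Sirakov}, which the paper invokes. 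So the cases you worry about are either non-issues or handled by a known structural lemma, whereas the case you omit (orthogonality) is the genuinely delicate one. A related technical point: once $\lambda$ drops below $\max_{\overline G}x_N$ you must take $\Sigma_\lambda=(\Omega\cap H_\lambda)\setminus\overline{G^\lambda}$, removing the \emph{reflection} of $G$ rather than $G$ itself; with your definition $\Sigma_\lambda=H_\lambda\cap(\Omega\setminus\overline G)$ the equation $(-\Delta)^s w_\lambda=0$ fails at points whose reflection lands in $G$, and $w_\lambda$ is not nonnegative on $H_\lambda\setminus\Sigma_\lambda$.
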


We stress that, here and in the following, we do not assume that $\R^N \setminus \overline{G}$ (resp. $\Omega \setminus \overline{G}$) is connected; this is why we do not use the common terminology \emph{exterior domain} (resp. \emph{annular domain}), which indeed refers usually
to an exterior connected set (resp. annular connected set). 

We also stress that, while the Dirichlet boundary datum has to be the same for all the connected components of $G$, the Neumann boundary datum can vary.
\medskip

Overdetermined elliptic problems have a long history, which begins with the seminal paper by J. Serrin \cite{Serrin}. A complete review of the results which have been obtained since then goes beyond the aims of this work. In what follows, we only review the contributions regarding exterior or annular domains in the local case, and the few results which are available about overdetermined problems for the fractional Laplacian. 

Overdetermined problems for the standard Laplacian in exterior domains have been firstly studied by W. Reichel in \cite{Reichel1}, where he assumed that both $G$ and $\R^N \setminus \overline{G}$ are connected. In such a situation, W. Reichel proved that if there exists $u \in \mathcal{C}^2(\R^N \setminus G)$ (i.e. of class $\mathcal{C}^2$ up to the boundary of the exterior domain) such that 
\begin{equation}\label{pb: overdet local}
\begin{cases}
-\Delta u =f(u) & \text{in $\R^N \setminus \overline{G}$} \\
u = a>0  & \text{on $\pa G$} \\
u(x) \to 0 & \text{if $|x| \to +\infty$}\\
\pa_{\nu} u= const.= \alpha \le 0 & \text{on $\pa G$} \\
0 \le u <a & \text{in $\R^N \setminus \overline{G}$},
\end{cases} 
\end{equation}
where $\pa_\nu$ denotes the usual normal derivative, and $f(t)$ is a locally Lipschitz function, non-increasing for non-negative and small values of $t$, then $G$ has to be a ball, and $u$ is radially symmetric and radially decreasing with respect to the centre of $G$. The proof is based upon the moving planes method. 

With a different approach, based upon the moving spheres, A. Aftalion and J. Busca \cite{AftBus} addressed the same problem when $f$ is not necessarily non-increasing for small positive values of its argument. In particular, they could treat the interesting case $f(t) = t^p$ for $N/(N-2) < p \le (N+2)/(N-2)$. 

Afterwards, B. Sirakov \cite{Sirakov} proved that the
result obtained in~\cite{Reichel1}
holds without the assumption $u<a$, and for possibly multi-connected sets $G$. Moreover, he allowed different boundary conditions on the different components of $G$: that is, $u=a>0$ and $\pa_\nu u = \alpha \le 0$ on $\pa G$ can be replaced by $u=a_i>0$ and $\pa_\nu u=\alpha_i \le 0$ on $\pa G_i$, with $a_i$ and $\alpha_i$ depending on $i=1,\dots,k$. His method works also in the setting considered in \cite{AftBus}.


We point out that in \cite{Sirakov} a quasi-linear regular strongly elliptic operator has been considered instead of the Laplacian. Concerning quasi-linear but possibly degenerate operators, we refer to \cite{Reichel2}.

As far as overdetermined problems in annular domains is concerned, we refer the reader to \cite{Aless, Philippin, PayPhil, Reichel3}. In \cite{Aless}, G. Alessandrini proved the local counterpart of Theorem \ref{thm: example 2} for quasi-linear, possibly degenerate, operators. This enhanced the results in \cite{Philippin, PayPhil}. In \cite{Reichel3}, W. Reichel considered inhomogeneous equations for the Laplace operator in domains with one cavity.

Regarding the nonlocal framework, the natural counterpart of the J. Serrin's problem for the fractional $s$-Laplacian ($0<s<1$) has been recently studied by M. M. Fall and S. Jarohs in \cite{FallJarohs}. In such contribution the authors introduced the main tools for dealing with nonlocal overdetermined problems, such as comparison principles for anti-symmetric functions and a nonlocal version of the Serrin's corner lemma. Such results will be used in our work. We refer also to \cite{Dalibard}, where the authors considered a similar problem in dimension $N=2$, and for $s=1/2$. In both the quoted papers the $s$-normal derivative defined in \eqref{def: s-derivative} plays the role of the normal derivative in the local setting. This is motivated by the regularity theory developed in \cite{SeR-O}, see also \cite{Grubb}, where the corresponding ``$s$-Neumann boundary value problem" is studied.
\medskip

As already mentioned, Theorems~\ref{thm: example}
and~\ref{thm: example 2}
are just simplified versions of more general results
that we obtained. Next section will present the results
obtained in full generality.

\subsection{The general setting}\label{FFGG}

Now we present our results in full generality.
For this, first we consider the
overdetermined problem in an exterior set~$\R^N \setminus \overline{G}$, namely:
\begin{equation}\label{pb: overdet}
\begin{cases}
(-\Delta)^s u =f(u) & \text{in $\R^N \setminus \overline{G}$} \\
u = a>0  & \text{in $\overline{G}$} \\
(\pa_{\nu})_s u= const.= \alpha_i \in \R & \text{on $\pa G_i$.} \\
\end{cases}
\end{equation}
Recall that $\R^N \setminus \overline{G}$ will always be assumed to be of class $\mathcal{C}^2$, and note that the $s$-Neumann boundary datum can depend on $i$. Our first main result in this framework is the following.

\begin{theorem}\label{thm: main 1}
Let~$d(x)$ to be the distance of~$x\in\overline\Omega$
to the boundary of~$\Omega$, and let us assume that
\begin{equation}\label{W buona intro}
 \frac{u}{d^s} \in {\mathcal{C}}^{0,1}(\overline\Omega).
\end{equation}
Let $f(t)$ be a locally Lipschitz function, non-increasing for nonnegative and small values of $t$, and let $\alpha_i <0$ for every $i$. If there exists a weak solution $u \in \mathcal{C}^s(\R^N)$ of \eqref{pb: overdet}, satisfying
\[
\text{$0 \le u < a$ in $\R^N \setminus \overline{G}$ and $u(x) \to 0$ as $|x| \to +\infty$},
\]
then $G$ is a ball, and $u$ is radially symmetric and radially decreasing with respect to the centre of $G$. 
\end{theorem}

\begin{remark}
Assumption \eqref{W buona intro} is a regularity assumption on the problem. Having assumed that $f$ is locally Lipschitz and that $u \in \mathcal{C}^s(\R^N)$ is bounded, \eqref{W buona intro} is satisfied for instance if $\Omega$ is of class $C^\infty$, see \cite{Grubb}, or if $\Omega$ is $\mathcal{C}^{2,\alpha}$ for some $\alpha \in (0,1)$ and $s >1/2$, see \cite{BRR}.
\end{remark}

The concept of weak solution and its basic properties
will be recalled in Section~\ref{sec: preliminaries}
for the facility of the reader.

We point out that under additional assumptions on $f$, both the assumptions \eqref{W buona intro} and $\alpha_i < 0$ in Theorem~\ref{thm: main 1}
can be dropped, and the condition $0 \le u < a$ can be relaxed.

\begin{theorem}\label{thm: main 1 prime}
Let $f$ be a locally Lipschitz function, non-increasing in the whole interval $[0,a]$. If there exists a weak solution $u \in \mathcal{C}^s(\R^N)$ of \eqref{pb: overdet} satisfying
\[
\text{$0 \le u \le a$ in $\R^N  \setminus \overline{G}$ and $u(x) \to 0$ as $|x| \to +\infty$},
\]
then $G$ is a ball, and $u$ is radially symmetric and radially decreasing with respect to the centre of $G$. 
\end{theorem}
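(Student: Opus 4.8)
The plan is to prove Theorem~\ref{thm: main 1 prime} by the method of moving planes, in the nonlocal form developed by Fall--Jarohs, treating it as a variant of the proof of Theorem~\ref{thm: main 1} in which the sign condition $\alpha_i<0$ is replaced by the stronger monotonicity of $f$ on all of $[0,a]$. For a unit direction~$e$ and $\lambda\in\R$ let $T_\lambda=\{x\cdot e=\lambda\}$, let $\Sigma_\lambda=\{x\cdot e>\lambda\}$ be the moving half-space, and let $x^\lambda$ denote the reflection of $x$ across $T_\lambda$. Set $w_\lambda(x)=u(x^\lambda)-u(x)$ on $\Sigma_\lambda$, which is antisymmetric with respect to~$T_\lambda$. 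Because $u\to 0$ at infinity and $u\equiv a$ on $\overline G$ with $0\le u\le a$ outside, for $\lambda$ very large $\Sigma_\lambda$ misses $\overline G$ entirely and a sliding/comparison argument (using that $f$ is nonincreasing, so the relevant linearized operator has the right sign, via the nonlocal maximum principle for antisymmetric functions from \cite{FallJarohs}) gives $w_\lambda\ge 0$ in $\Sigma_\lambda$. One then decreases~$\lambda$ and sets
\[
\lambda_0=\inf\{\mu:\ w_\lambda\ge 0 \text{ in }\Sigma_\lambda \text{ for all }\lambda>\mu\}.
\]
The goal is to show $\lambda_0$ is such that $T_{\lambda_0}$ is a hyperplane of symmetry for both $u$ and~$G$; since~$e$ is arbitrary this forces~$G$ to be a ball and~$u$ radial, and radial monotonicity then follows from the strong maximum principle (or Hopf-type lemma) for the limiting configuration.

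The core of the argument is the usual dichotomy at $\lambda_0$. First, one must check $\lambda_0$ is finite and that the reflected cap $G^{\lambda_0}$ (reflection of $G\cap\Sigma_{\lambda_0}$) is contained in the region where comparison is available; here the hypothesis $0\le u\le a$ everywhere is what makes the Dirichlet datum cooperate, because on the reflected copies of~$\overline G$ one has $u(x^\lambda)=a\ge u(x)$ automatically, so $w_\lambda\ge 0$ there for free — this is precisely where the full monotonicity of $f$ substitutes for the sign condition on the $\alpha_i$. Then at $\lambda_0$ either (i) $\partial G$ becomes internally tangent to $T_{\lambda_0}$ at some point, or (ii) $w_{\lambda_0}\equiv 0$ on some cap and hence, by unique continuation / antisymmetry for the nonlocal equation, $w_{\lambda_0}\equiv 0$ in all of $\Sigma_{\lambda_0}$, giving the desired symmetry. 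In case (i) one derives a contradiction using the nonlocal Serrin corner lemma of \cite{FallJarohs} applied to $w_{\lambda_0}$ at the tangency point, exploiting the overdetermined condition $(\pa_\nu)_s u=\alpha_i$: the corner/boundary-point lemma forces $(\pa_\nu)_s w_{\lambda_0}$ to be strictly signed, but the equal $s$-Neumann data on the two tangent pieces of~$\partial G$ force it to vanish, a contradiction. In case (ii) but with $G$ not yet symmetric, one shows that $w_\lambda$ stays $\ge 0$ for slightly smaller $\lambda$ as well (again via the maximum principle plus the fact that $G^\lambda$ has not yet exited), contradicting minimality of $\lambda_0$ unless $T_{\lambda_0}$ is already a symmetry hyperplane of~$G$.

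The strong maximum principle / Hopf-type input needs the linearized inequality $(-\Delta)^s w_\lambda = c_\lambda(x) w_\lambda$ with $c_\lambda$ bounded (which follows from local Lipschitz continuity of $f$ and $0\le u\le a$), combined with antisymmetry; all of this is exactly the toolkit imported from \cite{FallJarohs}, so I would invoke their comparison principle for antisymmetric supersolutions and their nonlocal corner lemma as black boxes. I expect the main obstacle to be the careful bookkeeping in the "start" of the moving planes — ensuring that for $\lambda$ large the cap configuration genuinely allows the comparison principle to start, given that $u$ is only $\mathcal C^s$ up to the boundary and the equation holds only weakly — and, relatedly, justifying that the limiting step $\lambda\to\lambda_0$ preserves the inequality and that the sets $G^\lambda$ behave continuously; the topological subtleties caused by $G$ being possibly disconnected (so that the reflected caps can interact with several components $G_i$ carrying different $\alpha_i$) require that the corner lemma be applied component by component at the first point of contact, which is why the Dirichlet datum being the \emph{same} constant $a$ on all of $G$ is essential while the $\alpha_i$ may differ.
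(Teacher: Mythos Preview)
Your outline mixes up two distinct positions: the geometric critical value $\bar\lambda$ (where internal tangency or orthogonality of the reflected cap first occurs) and the analytic infimum $\lambda_0$ of the set where $w_\lambda\ge 0$. Your dichotomy ``(i) internal tangency or (ii) $w_{\lambda_0}\equiv 0$'' is stated at $\lambda_0$, but internal tangency is a property of $\bar\lambda$; before you can invoke the corner/Hopf lemma at a tangency point you must first prove $\lambda_0=\bar\lambda$. In the proof of Theorem~\ref{thm: main 1} this is done via a continuation step (Lemma~\ref{lem: moving continuation}), whose Case~3 ($\bar x\in\partial G^\mu\cap T_\mu$) relies explicitly on $(\partial_\nu)_s u=\alpha_i<0$. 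You do not explain how to replace this, and your remark that monotonicity of $f$ ``substitutes for the sign condition on the $\alpha_i$'' is placed at the wrong spot (the Dirichlet cooperation on reflected pieces of $\overline{G}$), not at the continuation obstacle.

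The paper's route is genuinely different and avoids this issue. Because $f$ is non-increasing on the whole of $[0,a]$, one has $c_\lambda\ge 0$ everywhere, and the interior-minimum argument of Lemma~\ref{lem: moving initial} works verbatim for \emph{every} $\lambda\ge\bar\lambda$, giving $w_\lambda\ge 0$ in $\Sigma_\lambda$ directly, with no continuation step at all. One then shows $w_{\bar\lambda}\equiv 0$ from the overdetermined condition exactly as in the conclusion of Theorem~\ref{thm: main 1}. But there is a second subtlety you miss: since we only have $u\le a$ (not $u<a$), the argument of Lemma~\ref{lem: condizione per simmetria} does not immediately give symmetry of $G$ from $w_{\bar\lambda}\equiv 0$. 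The paper instead appeals to the abstract criterion of Proposition~\ref{prop: criterion} (via Lemma~\ref{090}) to conclude that $u$ is radially symmetric and non-increasing about some point, so that $\{u<a\}$ is the complement of a ball $B$; a separate geometric argument (using Lemma~\ref{lem: geom remark} and the strict positivity of $w_{e,\lambda}$ for $\lambda>0$) then rules out $G\subsetneq B$. Your plan does not account for this final step.
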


Other results in the same direction are the following.

\begin{corollary}\label{corol: main 1}
Under the assumptions of Theorem \ref{thm: main 1}, let us assume that $f(a) \le 0$; then the request $\alpha_i<0$ in
the statement of
Theorem~\ref{thm: main 1}
is not necessary, and condition $0 \le u <a$ can be replaced by $0 \le u \le a$. 
\end{corollary}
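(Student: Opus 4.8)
The plan is to deduce Corollary~\ref{corol: main 1} from Theorem~\ref{thm: main 1 prime} rather than re-running the moving planes machinery. The point is that the hypotheses of Corollary~\ref{corol: main 1} (namely those of Theorem~\ref{thm: main 1} together with $f(a)\le 0$, and the relaxed bound $0\le u\le a$) should be shown to imply the hypotheses of Theorem~\ref{thm: main 1 prime}, at which stage the conclusion is immediate. The one genuine gap is that Theorem~\ref{thm: main 1} assumes $f$ is non-increasing only \emph{for nonnegative and small values of $t$}, whereas Theorem~\ref{thm: main 1 prime} wants monotonicity on the whole of $[0,a]$; so the strategy is to \emph{modify} $f$ outside a neighbourhood of $t=0$ without changing the problem solved by $u$.

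First I would observe that, since $0\le u\le a$ by assumption and $u\equiv a$ on $\overline G$ with $u\to 0$ at infinity, the nonlinearity $f$ is only ever evaluated at values of $u$ lying in $[0,a]$; hence replacing $f$ by any locally Lipschitz $\tilde f$ with $\tilde f = f$ on $[0,a]$ leaves $u$ a (weak) solution of the same overdetermined problem \eqref{pb: overdet}. More importantly, I want to also tamper with $f$ \emph{inside} $[0,a]$: let $\delta>0$ be such that $f$ is non-increasing on $[0,\delta]$, and define $\tilde f$ on $[0,a]$ to agree with $f$ on $[0,\delta]$ and to be, say, affine and non-increasing on $[\delta,a]$ joining $f(\delta)$ to $f(a)$ — this is possible precisely because $f(a)\le 0\le f(\delta)$ is not what we need; rather we just need $\tilde f$ non-increasing on $[\delta,a]$ and equal to $f$ at the two endpoints is not required either, only agreement with the problem. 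The cleanest choice is: keep $\tilde f=f$ near $0$ and extend it to be constant equal to $f(\delta)$ on $[\delta,a]$ if $f(\delta)\le$ ... — here one must be slightly careful, and the right move is to use $f(a)\le0$ to guarantee one can interpolate monotonically. In any case, after this surgery $\tilde f$ is locally Lipschitz and non-increasing on all of $[0,a]$.

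The subtle point — and the main obstacle — is that $u$ need no longer solve the PDE with $\tilde f$ in the region where $u\in(\delta,a)$, since there $\tilde f(u)\ne f(u)$ in general. This is where the hypotheses $\alpha_i<0$ being \emph{droppable} and the structure of the moving planes proof of Theorem~\ref{thm: main 1} actually matter: the correct route is not a black-box reduction but to revisit the proof of Theorem~\ref{thm: main 1} and check that the only places the sign condition $\alpha_i<0$ was used are (i) to start the moving planes (to get strict inequality near $\partial G$ via the nonlocal Hopf/Serrin corner lemma of \cite{FallJarohs}) and (ii) possibly to run the narrow-domain argument; and that if instead one assumes $f(a)\le 0$ then the comparison principle for anti-symmetric functions applies in the sliced region without the sign of $\alpha_i$, because $f(a)\le 0$ controls the sign of $(-\Delta)^s u = f(u)$ near the plateau. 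Concretely: with $0\le u\le a$ and $f$ non-increasing near $0$, the function $w = u - u_\lambda$ (the difference of $u$ with its reflection across a critical hyperplane) is anti-symmetric and satisfies a linear inequality with a bounded-above zeroth-order coefficient; the weak maximum principle and the strong maximum principle then force $w\equiv 0$, exactly as in Theorem~\ref{thm: main 1 prime}, and the condition $f(a)\le 0$ is what replaces $\alpha_i<0$ in handling the boundary behaviour at $\partial G_i$ where $u$ touches its maximum $a$.

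Thus the proof is short: I would (1) state that under $f(a)\le0$ the extension/modification argument reduces to Theorem~\ref{thm: main 1 prime}, or alternatively (2) point out which steps of the proof of Theorem~\ref{thm: main 1} go through verbatim and which single step (the initiation of moving planes at $\partial G_i$) is now justified by $f(a)\le 0$ in place of $\alpha_i<0$, invoking the nonlocal Serrin corner lemma of \cite{FallJarohs}. The conclusion that $G$ is a ball and $u$ is radial and radially decreasing then follows exactly as before. The only real care needed is the bookkeeping in the modification of $f$ and verifying the comparison principle for anti-symmetric functions still has the required sign structure; everything else is a direct citation of the already-proved theorems.
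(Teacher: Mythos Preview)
Your proposal does not land. The modification-of-$f$ route you sketch first cannot work, and you essentially say so yourself: changing $f$ on $(\delta,a)$ means $u$ no longer solves $(-\Delta)^s u=\tilde f(u)$ there, so Theorem~\ref{thm: main 1 prime} does not apply to $u$. Your fallback plan of ``revisiting the proof of Theorem~\ref{thm: main 1}'' is left as a vague promise; you do not identify where $\alpha_i<0$ and $u<a$ are actually invoked (Case 2 and Case 3 of Lemma~\ref{lem: moving continuation}, and Lemma~\ref{lem: condizione per simmetria}), nor do you explain concretely how $f(a)\le 0$ would substitute in those steps.

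The paper's argument is much simpler and goes in the opposite direction: rather than bypassing the hypotheses of Theorem~\ref{thm: main 1}, it \emph{recovers} them. First, from $0\le u\le a$ and $f(a)\le 0$ one shows $u<a$ strictly in $\R^N\setminus\overline{G}$: if $u(\bar x)=a$ at some interior point then $(-\Delta)^s u(\bar x)>0$ (since $u\le a$ everywhere and $u\to 0$ at infinity forces $u<a$ on a set of positive measure), while the equation and $f(a)\le 0$ force $(-\Delta)^s u(\bar x)\le 0$, a contradiction. Second, the function $a-u$ is a nonnegative supersolution of a linear equation with bounded potential, so the Hopf lemma of \cite{FallJarohs} gives $(\partial_\nu)_s(a-u)>0$ on $\partial G$, i.e.\ $\alpha_i<0$. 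Now all hypotheses of Theorem~\ref{thm: main 1} hold and one applies it as a black box. You missed this direct reduction entirely.
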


\begin{corollary}\label{corol: subharmonicity}
If under the assumptions of Theorem \ref{thm: main 1}, we suppose that $f(t) \le 0$ for $t \ge 0$, then the request $\alpha_i<0$ is not necessary, and condition $0 \le u <a$ can be replaced by $u \ge 0$. 
Analogously, if under the assumptions of Theorem \ref{thm: main 1 prime}, we suppose that $f(t) \le 0$ for $t \ge 0$, then condition $0 \le u \le a$ can be replaced by $u \ge 0$. 
\end{corollary}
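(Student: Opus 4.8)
The plan is to reduce both corollaries to the already-established Theorems~\ref{thm: main 1} and~\ref{thm: main 1 prime} by showing that the extra hypothesis on $f$ automatically forces the missing sign conditions. For the first corollary, assume $f(a)\le 0$. Since $u=a$ on $\overline G$ and $u<a$ (or $\le a$, which is what we want to reach) in the exterior set, the point is to control the behaviour of $(-\Delta)^s u$ near $\partial G$. First I would observe that the weak subsolution/supersolution machinery recalled in Section~\ref{sec: preliminaries}, together with $u\equiv a$ in $\overline G$ and the equation $(-\Delta)^s u=f(u)$ in $\R^N\setminus\overline G$, lets us compare $u$ with the constant $a$: the function $a-u$ is $\ge 0$ somewhere and satisfies, in the weak sense in $\R^N\setminus\overline G$, an inequality of the form $(-\Delta)^s(a-u)=-f(u)$. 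Using that $f$ is non-increasing for small nonnegative $t$ (as in Theorem~\ref{thm: main 1}) together with $f(a)\le 0$, one gets that near $u=a$ the right-hand side has a favourable sign, and the strong maximum principle for the fractional Laplacian (or the nonlocal comparison principle for anti-symmetric functions of Fall–Jarohs, already invoked in the paper) yields $u\le a$ everywhere, i.e. we may legitimately work with $0\le u\le a$.

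Next I would upgrade this to the sign of the normal $s$-derivative. On $\partial G_i$ we have $u=a$ from inside $G$ and $u\le a$ in the exterior set, so the difference quotient in~\eqref{def: s-derivative} has numerator $u(x_0+t\nu)-a\le 0$ for small $t>0$, hence $(\partial_\nu)_s u(x_0)=\alpha_i\le 0$. To get the strict inequality $\alpha_i<0$ required by Theorem~\ref{thm: main 1}, I would argue by contradiction: if $\alpha_i=0$ for some $i$, then $u$ touches its maximum value $a$ along $\partial G_i$ with vanishing normal $s$-derivative, and the Hopf-type boundary lemma for the fractional Laplacian (the nonlocal Serrin corner lemma, again from \cite{FallJarohs} and cited in the introduction as a tool we use) forces $u\equiv a$ in a neighbourhood, contradicting $u(x)\to 0$ at infinity (the exterior component attached to $G_i$ is unbounded, or, if it were bounded, a separate but easy argument applies). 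Thus $\alpha_i<0$ for all $i$, and Theorem~\ref{thm: main 1} applies verbatim, giving that $G$ is a ball and $u$ is radial and radially decreasing. This proves Corollary~\ref{corol: main 1}.

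For Corollary~\ref{corol: subharmonicity}, the hypothesis $f(t)\le 0$ for all $t\ge 0$ is stronger and makes things cleaner. In the setting of Theorem~\ref{thm: main 1} I would first note that $f(t)\le 0$ in particular gives $f(a)\le 0$, so by the previous corollary we already know $\alpha_i<0$ is not needed and $0\le u\le a$ may replace $0\le u<a$; it remains to relax $0\le u\le a$ to merely $u\ge 0$. But $f(u)\le 0$ means $(-\Delta)^s u\le 0$ in $\R^N\setminus\overline G$, i.e. $u$ is $s$-subharmonic there; combined with $u=a$ on $\overline G$ and $u(x)\to 0$ at infinity, the maximum principle for $s$-subharmonic functions (the maximum is attained on the ``boundary data'', here $\overline G$ and the point at infinity) forces $0\le u\le a$ automatically. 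Hence the apparently weaker hypothesis $u\ge 0$ already implies $0\le u\le a$, and we are back in the situation of Theorem~\ref{thm: main 1}. The analogous statement with Theorem~\ref{thm: main 1 prime} in place of Theorem~\ref{thm: main 1} is identical: $f\le 0$ on $[0,a]$ makes $u$ $s$-subharmonic, and the maximum principle turns $u\ge 0$ into $0\le u\le a$, so Theorem~\ref{thm: main 1 prime} applies.

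The main obstacle I expect is the rigorous justification of the boundary behaviour: passing from the interior comparison $u\le a$ to the strict sign $\alpha_i<0$ requires a fractional Hopf/corner lemma that is robust enough to handle the possibly non-convex, multi-component set $G$ and the low global regularity $u\in\mathcal C^s(\R^N)$ (with only the normal $s$-derivative, not a full gradient, under control). One must also be careful that the exterior region adjacent to each $G_i$ is genuinely unbounded so that $u(x)\to 0$ gives a real contradiction with $u\equiv a$; if $G$ has several components, the maximum principle and Hopf lemma must be applied componentwise with attention to the geometry near where components could be close. All of these ingredients are, however, exactly the tools the introduction says are imported from \cite{FallJarohs} and developed in Section~\ref{sec: preliminaries}, so the corollaries should follow without genuinely new analytic input beyond bookkeeping.
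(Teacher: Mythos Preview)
Your approach to Corollary~\ref{corol: subharmonicity} is correct and matches the paper's: you observe that $f\le 0$ makes $u$ $s$-subharmonic in $\R^N\setminus\overline G$, so the maximum principle (which the paper carries out explicitly by evaluating $(-\Delta)^s u$ at an interior maximum point and reaching a contradiction with $u\to 0$ at infinity) upgrades $u\ge 0$ to $0\le u\le a$, after which one invokes Corollary~\ref{corol: main 1} since $f(a)\le 0$.

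One organizational remark: the bulk of your write-up is devoted to re-proving Corollary~\ref{corol: main 1}, which in the paper is a separate, already-established result (proved via the difference-quotient function $c(x)$ and the Hopf lemma of Fall--Jarohs applied to $a-u$, exactly along the lines you sketch). For Corollary~\ref{corol: subharmonicity} itself you only need the short maximum-principle step plus a citation of Corollary~\ref{corol: main 1}; the worries you raise in your last paragraph about connectedness of the exterior region and the robustness of the corner lemma belong to the proof of Corollary~\ref{corol: main 1}, not here.
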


The proof of the corollaries is based upon simple comparison arguments. When $f \ge 0$, in the same way one could show that assumption $u \ge 0$ is not necessary, obtaining in particular Theorem \ref{thm: example}.

It is worth to notice that
the regularity assumption $u \in \mathcal{C}^s(\R^N)$
is natural in our framework, see Theorem \ref{thm: regularity} in the appendix at the end of the paper: what we mean is that each bounded weak solution of the first two equations in \eqref{pb: overdet} is of class $\mathcal{C}^s(\R^N)$. All the previous results (and the forthcoming ones) could have stated for bounded weak solutions, without any regularity assumption. We preferred to assume from the beginning that $u \in \mathcal{C}^s(\R^N)$, since in this way the condition on $(\pa_\nu)_s u$ makes immediately sense, without further observations. The $\mathcal{C}^s(\R^N)$ regularity is optimal, as shown by the simple 
example
\[
\begin{cases}
(-\Delta)^s u=1 & \text{in $B_1$}\\
u=0 & \text{in $\R^N \setminus B_1$},
\end{cases}
\]
which has the explicit solution $u(x)= \gamma_{N,s} (1-|x|^2)^s_+$, where $v_+$ denotes the positive part of $v$, and $\gamma_{N,s}$ is a normalization constant depending on $N$ and on $s$. 
Also, eigenfunctions are not better than ${\mathcal{C}}^s(\R^N)$,
see e.g.~\cite{MR3233760}.

\begin{remark}\label{rem: on regularity}
In the local setting \cite{Reichel1}, see also \cite{Sirakov}, the solution $u$ of \eqref{pb: overdet local} is supposed to be of class $\mathcal{C}^2$ up to the boundary. In this way, the authors could avoid to assume that $\alpha <0$: indeed, in Proposition 1 in \cite{Reichel1}, as well as in Step 4 in the proof of the main results in \cite{Sirakov}, the authors computed the second derivatives of $u$ on the boundary of $G$. In our context, it seems not natural to ask that $u$ has better regularity than $\mathcal{C}^s$, and this is the main reason for which we
need to suppose $\alpha<0$ in Theorem~\ref{thm: main 1}. 
\end{remark}

\begin{remark}
We think that it is worth to point out that there exist solutions of \eqref{pb: overdet} satisfying all the assumptions of the above statements when $G$ is a ball. Such existence results will be stated and proved 
at the end of the paper, in Section~\ref{sec: existence}, for the sake of completeness.
\end{remark}

\begin{remark}
As already pointed out, the fact that $\R^N \setminus \overline{G}$ is not supposed to be connected marks a difference with respect to the local case. The same difference, which arises also in \cite{FallJarohs}, is related to the non-local nature of both the fractional Laplacian and the boundary conditions.
\end{remark}

Now we present our results in the general form for
an annular set $\Omega \setminus \overline{G}$
(recall that in this case $\Omega$ is bounded,
and, by the regularity assumed,
$G$ cannot be internally tangent to $\partial \Omega$). {Recall that $\Omega$ can be multi-connected but, in this case, we assume that $\Omega$ has a finite number of connected components.}
The natural counterpart of the overdetermined
problem \eqref{pb: overdet} for annular sets is given by
\begin{equation}\label{pb: overdet bounded}
\begin{cases}
(-\Delta)^s u =f(u) & \text{in $\Omega \setminus \overline{G}$} \\
u = a>0  & \text{in $\overline{G}$} \\
u = 0 & \text{in $\R^N \setminus \overline{\Omega}$} \\
(\pa_{\nu})_s u= \alpha_i \in \R & \text{on $\pa G$} \\
(\pa_{\nu})_s u= \beta \in \R & \text{on $\pa \Omega$},
\end{cases}
\end{equation}
where the notation $(\pa_{\nu})_s$ is used for the inner normal $s$-derivative in $\pa (\Omega \setminus \overline{G})$. 
In this setting we have:

\begin{theorem}\label{thm: main 2}
Let $f$ be locally Lipschitz. Let us assume that there exists a weak solution $u \in \mathcal{C}^s(\R^N)$ of \eqref{pb: overdet} satisfying
\[
0 < u < a \qquad \text{in $\Omega \setminus \overline{G}$}.
\]
Then $\Omega$ and $G$ are concentric balls, and $u$ is radially symmetric and radially decreasing with respect to their centre. 
\end{theorem}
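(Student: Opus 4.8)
The natural strategy is the method of moving planes, carried out in the nonlocal framework with the comparison and boundary-point principles for antisymmetric functions of~\cite{FallJarohs}. Fix a unit vector $e$ and, for $\lambda\in\R$, set $T_\lambda=\{x\cdot e=\lambda\}$, let $x_\lambda$ denote the reflection of $x$ across $T_\lambda$, put $\Sigma_\lambda=\{x\cdot e>\lambda\}$, $u_\lambda(x)=u(x_\lambda)$ and $w_\lambda=u_\lambda-u$. Then $w_\lambda$ is antisymmetric with respect to $T_\lambda$ and, on the part of $\Sigma_\lambda$ on which both $x$ and $x_\lambda$ lie in $\Omega\setminus\overline G$, it solves $(-\Delta)^s w_\lambda=c_\lambda w_\lambda$ with $c_\lambda=(f(u_\lambda)-f(u))/(u_\lambda-u)$; since $u$ is bounded and $f$ is locally Lipschitz, $\|c_\lambda\|_{L^\infty}$ is bounded uniformly in $\lambda$. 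It is precisely here that the boundedness of $\Omega$ lets one dispense with any monotonicity or sign assumption on $f$ (which was needed in the exterior case): the antisymmetric maximum principle in sets of small measure holds for arbitrary bounded potentials, and only thin caps occur at the beginning of the procedure.

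I would start with $\lambda>\max_{x\in\overline\Omega}x\cdot e$, where $\Sigma_\lambda\subset\{u=0\}$ and hence $w_\lambda\ge 0$ on $\Sigma_\lambda$ trivially (as $u\ge 0$ everywhere), and then decrease $\lambda$, letting $\lambda^\ast$ be the smallest value down to which $w_\lambda\ge 0$ persists in $\Sigma_\lambda$ while the reflected cap of $\Omega\setminus\overline G$ remains inside $\overline{\Omega\setminus\overline G}$ (outside this range $w_\lambda$ would be forced to change sign on part of the active region). The usual continuation step — the small-measure antisymmetric maximum principle to re-establish $w_\lambda\ge 0$ after each small decrease, together with the strong maximum principle for antisymmetric functions to promote $w_\lambda\ge 0$ to $w_\lambda>0$ in the interior of the active region whenever $w_\lambda\not\equiv 0$ — shows that the sweep can only halt for a geometric reason at $\lambda=\lambda^\ast$: either (a) $T_{\lambda^\ast}$ is orthogonal to $\partial G$ or to $\partial\Omega$ at some point, or (b) the reflected cap becomes internally tangent to $\partial G$ or to $\partial\Omega$ at a point away from $T_{\lambda^\ast}$.

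If $w_{\lambda^\ast}\equiv 0$ in $\Sigma_{\lambda^\ast}$ we are done for the direction $e$; so assume $w_{\lambda^\ast}>0$ in the interior of the active region and deduce a contradiction from the overdetermined data. In case (b), at the tangency point $x_0$ the nonlocal Hopf-type lemma of~\cite{FallJarohs} gives $(\pa_\nu)_s w_{\lambda^\ast}(x_0)>0$; but $x_0$ and its reflection lie on the same boundary component (some $\partial G_i$ or $\partial\Omega$), on which $(\pa_\nu)_s u$ is constant, and the reflection carries the inner normal at $x_{0,\lambda^\ast}$ to the inner normal at $x_0$, whence $(\pa_\nu)_s w_{\lambda^\ast}(x_0)=(\pa_\nu)_s u(x_{0,\lambda^\ast})-(\pa_\nu)_s u(x_0)=0$, a contradiction. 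In case (a), $x_0\in T_{\lambda^\ast}$ is a right-angled corner of the active region, and one replaces Hopf by the nonlocal Serrin corner lemma of~\cite{FallJarohs}; since $u_{\lambda^\ast}\equiv u$ and $(\pa_\nu)_s u_{\lambda^\ast}\equiv(\pa_\nu)_s u$ along $T_{\lambda^\ast}$, the constancy of the $s$-Neumann datum on $\partial G_i$ (resp.\ on $\partial\Omega$) again contradicts the strict boundary behaviour of $w_{\lambda^\ast}$ forced by the corner lemma. Hence $w_{\lambda^\ast}\equiv 0$, i.e.\ $u$ is symmetric, and by the monotonicity built into the procedure radially decreasing, in the direction $e$.

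Since $e$ is arbitrary, $u$ is symmetric with respect to a hyperplane orthogonal to every direction; by the classical argument these hyperplanes pass through a common point $x^\ast$, so $u$ is radially symmetric and radially decreasing about $x^\ast$. Then $\overline G=\{u=a\}$, together with $0<u<a$ on $\Omega\setminus\overline G$ and $u=0$ on $\R^N\setminus\overline\Omega$, forces $\overline G$ and $\overline\Omega$ to be closed balls centred at $x^\ast$ — in particular connected — and hence concentric, with $u$ radially decreasing with respect to their common centre. The main obstacle is the analysis at the critical position $\lambda^\ast$: one must check that the sweep really stops only for the geometric reasons (a)–(b), which for an annular set requires controlling the inner and the outer boundary simultaneously and the possibility that the reflected cap exits $\Omega\setminus\overline G$, and then apply the correct boundary-point lemma — Hopf or corner — in the fractional setting for a solution that is a priori only of class $\mathcal C^s$; these lemmas are available from~\cite{FallJarohs}, but combining them with the two overdetermined conditions is the delicate point.
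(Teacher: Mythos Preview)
Your proposal is correct and follows essentially the same approach as the paper: moving planes with the small-measure antisymmetric maximum principle (Proposition~\ref{SMALL}) for both the initial step and the continuation, the strong maximum principle to upgrade $w_\lambda\ge 0$ to $w_\lambda>0$ via an argument like Lemma~\ref{lem: condizione simmetria bdd}, and the Hopf and Serrin corner lemmas of~\cite{FallJarohs} at the critical position. The only noteworthy difference is cosmetic: at the end you read off that $G$ and $\Omega$ are balls from the level sets $\{u=a\}$ and $\{u>0\}$ of the already radial $u$, whereas the paper records the symmetry of $G$ and $\Omega$ directly at each moving-plane step and then argues connectedness separately.
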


Note that in this case no assumption on the monotonicity of $f$, or on the sign of $\alpha_i$ and $\beta$, is needed. Moreover, the result holds for $\Omega \setminus \overline{G}$ of class $\mathcal{C}^2$, without any further assumption such as \eqref{W buona intro}. 

As for the problem in exterior sets, the condition $0 < u <a$ can be relaxed under additional assumptions on $f$.

\begin{theorem}\label{thm: main 2 prime}
Let $f$ be a locally Lipschitz function, non-increasing in $[0,a]$. If there exists a weak solution $u \in \mathcal{C}^s(\R^N)$ of \eqref{pb: overdet} satisfying
\[
\text{$0 \le u \le a$ in $\R^N$},
\]
then both $\Omega$ and $G$ are concentric balls, and $u$ is radially symmetric and radially decreasing with respect to their centre. 
\end{theorem}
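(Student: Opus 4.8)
The plan is to deduce Theorem~\ref{thm: main 2 prime} from Theorem~\ref{thm: main 2} by showing that the weaker hypotheses $0 \le u \le a$ in $\R^N$, together with $f$ non-increasing on $[0,a]$, already force the strict inequality $0 < u < a$ in $\Omega \setminus \overline G$. This is the same mechanism used to pass from Theorem~\ref{thm: main 1 prime} to Theorem~\ref{thm: main 1}, or in Corollaries~\ref{corol: main 1}--\ref{corol: subharmonicity}: one argues by comparison and then invokes a strong maximum principle for the fractional Laplacian. Concretely, set $v := a - u$ in $\Omega\setminus\overline G$; it is nonnegative there, vanishes on $\overline G$ to the outside, and satisfies $(-\Delta)^s v = -f(u) = -f(a-v)$. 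Using that $f$ is non-increasing, $t \mapsto -f(a-t)$ is non-increasing in $t$ as well, so writing $-f(a-v) = c(x) v + (-f(a))$ with $c(x) \le 0$ bounded (since $f$ is locally Lipschitz and $v$ is bounded), we are in a position to apply the strong maximum principle to a linear nonlocal inequality $(-\Delta)^s v - c(x) v \ge -f(a)$ with a sign-condition handled by a translation, or more simply to observe that $v \ge 0$, $v \not\equiv 0$ in the connected component, forces $v > 0$ in $\Omega\setminus\overline G$.

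The two one-sided conclusions should be handled separately. For $u < a$ (equivalently $v > 0$): $v \ge 0$ in $\Omega\setminus\overline G$ and $v \equiv a > 0$ on $G$; if $v$ vanished at some interior point $x_0$ of a connected component $\mathcal O$ of $\Omega\setminus\overline G$, the strong maximum principle for $(-\Delta)^s$ applied to the inequality $(-\Delta)^s v \ge c(x) v$ (which holds because $-f(a-v)+f(a) = \int_0^1 (-f'(a-\tau v))\,d\tau \cdot v \ge 0$ by monotonicity... wait, we want the inequality in the right direction) — more carefully, monotonicity of $f$ gives $f(u) \le f(0)$ hence $(-\Delta)^s v = -f(u) \ge -f(0)$; but to run the strong maximum principle one uses that $v \ge 0$ in all of $\R^N$ and $(-\Delta)^s v (x_0) \le 0$ at an interior zero, together with the nonlocal representation of $(-\Delta)^s v(x_0)$ as a negative multiple of $\int (v(x_0) - v(y))|x_0-y|^{-N-2s}\,dy$, to conclude $v \equiv 0$, contradicting $v \equiv a$ on $G$. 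For $u > 0$: $u \ge 0$ in $\R^N$, $u \equiv 0$ in $\R^N\setminus\overline\Omega$, and $u \equiv a$ on $G$; the same strong maximum principle argument applied to $u$ on a connected component of $\Omega\setminus\overline G$ (using $(-\Delta)^s u = f(u)$ and the fact that one can subtract a constant to make the right-hand side have a favorable sign, or invoke the version of the strong maximum principle valid for $(-\Delta)^s u + C u \ge 0$ with $C$ the Lipschitz bound of $f$ on $[0,a]$) shows $u > 0$ there. Once $0 < u < a$ in $\Omega\setminus\overline G$ is established, all hypotheses of Theorem~\ref{thm: main 2} are met, and the conclusion follows verbatim.

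The main obstacle I anticipate is not conceptual but a matter of invoking the correct form of the nonlocal strong maximum principle and verifying its applicability in the present regularity class $u \in \mathcal{C}^s(\R^N)$, which is exactly the minimal regularity for which $(\pa_\nu)_s u$ makes sense but which does not a priori give pointwise meaning to $(-\Delta)^s u$ at boundary points. The clean way around this is to restrict the strong maximum principle to interior points of $\Omega\setminus\overline G$, where interior regularity theory for the fractional Laplacian (as recalled in Section~\ref{sec: preliminaries}) upgrades $u$ to be smooth enough for the pointwise equation, and to handle the geometry — namely that each connected component of $\Omega\setminus\overline G$ touches both $\partial G$ and $\partial\Omega$, so that $v$ is not identically zero on any component — using that $\Omega$ and $G$ each have finitely many components. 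A secondary, routine point is to make the reduction $-f(a-v) = c(x)v - f(a)$ rigorous with $c(x) \le 0$ via $c(x) = -\int_0^1 f'(a-\tau v(x))\,d\tau$ when $f$ is merely Lipschitz, using Rademacher's theorem, or else to bypass derivatives entirely and argue directly from the inequality $f(u) \le f(0)$ and from $u \le a$ at interior points. I expect this step to occupy only a few lines once the right lemma from Section~\ref{sec: preliminaries} (or from \cite{FallJarohs}) is cited.
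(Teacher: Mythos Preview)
Your reduction to Theorem~\ref{thm: main 2} by first proving $0 < u < a$ in $\Omega \setminus \overline{G}$ does not go through: the monotonicity of $f$ on $[0,a]$ alone is not enough to rule out the equalities. Concretely, suppose $u(x_0) = a$ at an interior point $x_0 \in \Omega \setminus \overline{G}$. Since $u \le a$ in $\R^N$ and $u \not\equiv a$ (as $u = 0$ outside $\Omega$), the pointwise representation gives $(-\Delta)^s u(x_0) > 0$, while the equation gives $(-\Delta)^s u(x_0) = f(a)$. This forces $f(a) > 0$, which is \emph{not} a contradiction unless one assumes $f(a) \le 0$. The same obstruction appears in your own computation $(-\Delta)^s v - c(x) v = -f(a)$ with $-c(x) \ge 0$: the right-hand side has the wrong sign when $f(a) > 0$, so the strong maximum principle does not apply. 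Symmetrically, ruling out an interior zero of $u$ requires $f(0) \ge 0$. Neither sign condition is assumed in Theorem~\ref{thm: main 2 prime}; they are precisely the extra hypotheses of Corollary~\ref{corol: main 2}. (You also swap the boundary values of $v = a - u$: one has $v = 0$ on $\overline{G}$ and $v = a$ on $\R^N \setminus \overline{\Omega}$, not the reverse.)

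The paper does not attempt this reduction. It runs the moving-planes argument directly under the weaker hypothesis $0 \le u \le a$, using the monotonicity of $f$ at a different point: it makes the linearized coefficient $c_\lambda$ in~\eqref{def: c_lambda} nonnegative, so that the \emph{weak} maximum principle for anti-symmetric functions (Proposition~3.1 in~\cite{FallJarohs}) yields $w_\lambda \ge 0$ in $\Sigma_\lambda$ for every $\lambda \ge \bar\lambda$ at once, with no small-measure step and no need for strict inequalities on $u$. One then shows $w_{\bar\lambda} \equiv 0$ as in the conclusion of Theorem~\ref{thm: main 2}, deduces radial symmetry and monotonicity of $u$ via Proposition~\ref{prop: criterion}, and finally identifies $G$ and $\Omega$ with the level sets $\{u = a\}$ and $\{u > 0\}$ by the argument used for Theorem~\ref{thm: main 1 prime}.
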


\begin{corollary}\label{corol: main 2}
Under the assumptions of Theorem \ref{thm: main 2}, let us assume that $f(a) \le 0$; then the condition $0 < u <a$ can be replaced by $0 < u \le a$ in $\R^N$. Analogously, if $f(0) \ge 0$, then the condition $0 < u <a$ can be replaced by $0 \le u < a$ in $\R^N$. 
\end{corollary}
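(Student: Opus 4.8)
The plan is to deduce the corollary from Theorem~\ref{thm: main 2}: it is enough to show that, under the one-sided sign hypothesis on $f$, the relaxed bounds on $u$ already entail the strict inequalities $0<u<a$ throughout $\Omega\setminus\overline{G}$, after which Theorem~\ref{thm: main 2} applies with no extra work. By the interior regularity theory for the fractional Laplacian (see Section~\ref{sec: preliminaries}), a bounded weak solution of $(-\Delta)^s u=f(u)$ is smooth enough inside the open set $\Omega\setminus\overline{G}$ for the equation to hold pointwise there, so that at any $x_0\in\Omega\setminus\overline{G}$ one has
\[
f(u(x_0))=(-\Delta)^s u(x_0)=c_{N,s}\,\pv\!\int_{\R^N}\frac{u(x_0)-u(y)}{|x_0-y|^{N+2s}}\,dy ,
\]
with $c_{N,s}>0$.

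First I would treat the case $f(a)\le 0$, assuming $0<u\le a$. Since $u\equiv a$ on $\overline{G}$, $u\equiv 0$ on $\R^N\setminus\overline{\Omega}$, and $u\le a$ on $\Omega\setminus\overline{G}$, the value $a$ is the global maximum of $u$ over $\R^N$. Suppose by contradiction that $u(x_0)=a$ at some $x_0\in\Omega\setminus\overline{G}$. Then the integrand in the displayed identity is nonnegative, and it is strictly positive for $y\in\R^N\setminus\overline{\Omega}$ --- a set of positive measure on which $u=0<a=u(x_0)$; hence $(-\Delta)^s u(x_0)>0$, while on the other hand it equals $f(a)\le 0$, a contradiction. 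Thus $u<a$ in $\Omega\setminus\overline{G}$, so $0<u<a$ there, and Theorem~\ref{thm: main 2} gives the assertion. The case $f(0)\ge 0$, assuming $0\le u<a$, is entirely symmetric: now $0$ is the global minimum of $u$, and if $u(x_0)=0$ at some $x_0\in\Omega\setminus\overline{G}$, then the integrand in the displayed identity is nonpositive and strictly negative for $y\in\overline{G}$ (where $u=a>0$), so $(-\Delta)^s u(x_0)<0$, contradicting $(-\Delta)^s u(x_0)=f(0)\ge 0$; hence $u>0$ in $\Omega\setminus\overline{G}$, again $0<u<a$, and we conclude by Theorem~\ref{thm: main 2}.

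This is just the strong maximum principle for $(-\Delta)^s$, so I do not expect a genuine obstacle. The only two points worth a line of justification are: (i) that the equation may be read pointwise at the extremal point, which is the interior regularity quoted above (equivalently, one can argue directly at the level of weak solutions, observing that local Lipschitz continuity of $f$ together with $f(a)\le 0$, resp.\ $f(0)\ge 0$, yields an inequality of the form $(-\Delta)^s(a-u)+L(a-u)\ge 0$, resp.\ $(-\Delta)^s u+Lu\ge 0$ in $\Omega\setminus\overline{G}$, and invoking the strong maximum principle for such operators); and (ii) that the nonlocality makes the conclusion robust --- the maximum (resp.\ minimum) principle cannot degenerate into an equality, because of the contribution of the region $\R^N\setminus\overline{\Omega}$ (resp.\ $\overline{G}$), where $u$ already differs from the extremal value. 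In particular no monotonicity of $f$ enters, only the single sign condition.
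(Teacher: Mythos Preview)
Your proposal is correct and follows essentially the same approach as the paper. The paper states that the proof of Corollary~\ref{corol: main 2} is analogous to that of Corollary~\ref{corol: main 1} and omits it; the proof of Corollary~\ref{corol: main 1} is precisely the argument you give --- show that the relaxed inequality is automatically strict by evaluating the fractional Laplacian pointwise at a putative extremal point and obtaining a sign contradiction from the contribution of a region where $u$ differs from the extremal value, then invoke the main theorem. Your direct use of the equation $(-\Delta)^s u(x_0)=f(u(x_0))$ at the extremal point is in fact a slight streamlining of the paper's presentation (which first rewrites the inequality via an auxiliary coefficient $c(x)$ before arriving at the same pointwise contradiction), and your handling of both the $f(a)\le 0$ and $f(0)\ge 0$ cases is exactly what is needed here, since Theorem~\ref{thm: main 2} imposes no sign condition on the Neumann data.
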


Clearly,
if both $f(a) \le 0$ and $f(0) \ge 0$, we obtain the thesis for $\alpha_i,\beta \in \R$ and $0 \le u \le a$,
and then we also obtain
Theorem~\ref{thm: example 2}
as a particular case.

The proof of Corollary \ref{corol: main 2} is analogue to that of Corollary \ref{corol: main 1}
(and thus will be omitted). One could also state a counterpart of Corollary \ref{corol: subharmonicity} in the present setting.\medskip

At last, we observe that when $G$ (or $\Omega \setminus \overline{G}$) is a priori supposed to be radial, our method permits to deduce the radial symmetry of the solutions of the Dirichlet problem. In the local framework, this type of results have been proved in \cite{Reichel3,Reichel1,Sirakov}. 

\begin{theorem}\label{thm: radial 1}
Let $B_{\rho}(x_0)$ be a ball, and let $u \in \mathcal{C}^s(\R^N)$ be a weak solution of
\[
\begin{cases}
(-\Delta)^s u= f(u) & \text{in $\R^N \setminus \overline{B_\rho(x_0)}$} \\
u = a>0  & \text{in $\overline{B_\rho(x_0)}$},
\end{cases}
\]
such that 
\begin{equation}\label{cond Neumann}
(\pa_\nu)_s u <0 \quad \text{on $\pa B_{\rho}(x_0)$}.
\end{equation}
If $f(t)$ is a locally Lipschitz function, non-increasing for nonnegative and small values of $t$, and $u$ satisfies
\[
\text{$0 \le u < a$ in $\R^N \setminus \overline{B_{\rho}(x_0)}$ and $u(x) \to 0$ as $|x| \to +\infty$},
\]
then $u$ is radially symmetric and radially decreasing with respect to $x_0$. 
\end{theorem}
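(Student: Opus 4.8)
The plan is to reduce Theorem~\ref{thm: radial 1} to the already-established Theorem~\ref{thm: main 1}. The key observation is that Theorem~\ref{thm: main 1} assumes the overdetermined datum $(\pa_\nu)_s u = \text{const} = \alpha_i < 0$ on each $\pa G_i$, and here we are given exactly the sign condition~\eqref{cond Neumann}, namely $(\pa_\nu)_s u < 0$ on $\pa B_\rho(x_0)$, but \emph{a priori} we do not know that $(\pa_\nu)_s u$ is constant along the sphere. So the heart of the matter is to upgrade ``$(\pa_\nu)_s u < 0$ pointwise on $\pa B_\rho(x_0)$'' to ``$(\pa_\nu)_s u$ is a negative constant on $\pa B_\rho(x_0)$'' using only the fact that the domain $G = B_\rho(x_0)$ is a priori radial. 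Once that is done, the hypotheses of Theorem~\ref{thm: main 1} are met (with $k=1$ and $\alpha_1 = $ that constant), and the conclusion that $u$ is radial and radially decreasing about $x_0$ follows immediately.

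To get the constancy of $(\pa_\nu)_s u$ on $\pa B_\rho(x_0)$, the natural tool is uniqueness for the Dirichlet problem combined with the rotational invariance of the data. Explicitly: let $R$ be any rotation about $x_0$, and set $u_R(x) := u(R(x - x_0) + x_0)$. Since the PDE $(-\Delta)^s u = f(u)$ is invariant under rotations, the Dirichlet datum $u \equiv a$ on $\overline{B_\rho(x_0)}$ is rotation-invariant, and the decay $u \to 0$ at infinity is preserved, the function $u_R$ is again a weak solution in $\mathcal{C}^s(\R^N)$ of the same Dirichlet problem with $0 \le u_R < a$ and $u_R(x)\to 0$. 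Hence, provided one has uniqueness of the bounded weak solution of this exterior Dirichlet problem under the stated monotonicity hypothesis on $f$ (for $t$ near $a$ and the constraint $0 \le u < a$ ensuring one stays in the good range), we conclude $u_R = u$ for every such rotation $R$, i.e.\ $u$ is already radially symmetric about $x_0$; in particular $(\pa_\nu)_s u$ is constant on $\pa B_\rho(x_0)$, and by~\eqref{cond Neumann} that constant is negative. At that point one could either invoke Theorem~\ref{thm: main 1} or simply observe that radial symmetry of $u$ is exactly the conclusion sought, and the radial monotonicity follows from the same maximum-principle / moving-plane information already developed in the proof of Theorem~\ref{thm: main 1}.

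Concretely I would organize the argument as follows. Step 1: record that, by Theorem~\ref{thm: regularity} in the appendix, $u_R \in \mathcal{C}^s(\R^N)$ and is a weak solution of the Dirichlet problem, with the pointwise bounds and decay inherited from $u$. Step 2: establish the needed uniqueness statement for the exterior Dirichlet problem
\[
\begin{cases}
(-\Delta)^s v = f(v) & \text{in } \R^N \setminus \overline{B_\rho(x_0)},\\
v = a & \text{in } \overline{B_\rho(x_0)},\\
v(x)\to 0 & \text{as } |x|\to+\infty,
\end{cases}
\qquad 0 \le v < a,
\]
under the hypothesis that $f$ is non-increasing for nonnegative small values of its argument; this is a comparison-principle argument for anti-symmetric or for difference functions, in the spirit of the comparison principles of Fall--Jarohs \cite{FallJarohs} recalled in Section~\ref{sec: preliminaries}, using that $w := v_1 - v_2$ solves a linearized equation $(-\Delta)^s w = c(x) w$ with $c(x) = \frac{f(v_1)-f(v_2)}{v_1-v_2} \le 0$ wherever the values are small, together with $w=0$ on $\overline{B_\rho(x_0)}$ and $w\to 0$ at infinity, forcing $w\equiv 0$. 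Step 3: apply Step 2 to $v_1 = u$, $v_2 = u_R$ to get $u_R = u$ for all rotations $R$ fixing $x_0$, hence $u$ is radial about $x_0$. Step 4: conclude radial monotonicity, either by noting it is part of the conclusion of Theorem~\ref{thm: main 1} (whose hypotheses are now all verified, with the single component $\pa G_1 = \pa B_\rho(x_0)$ and $\alpha_1 = (\pa_\nu)_s u < 0$), or directly via the one-dimensional maximum principle for the radial profile.

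The main obstacle I anticipate is Step 2, the uniqueness of the bounded decaying solution of the exterior Dirichlet problem in the appropriate class. The monotonicity of $f$ is only assumed near $a$ (``for nonnegative and small values of $t$'', which in this setting means $t$ near $0$, i.e.\ for $u$ close to its decay value) rather than on all of $[0,a]$, so one must be slightly careful: the constraint $0 \le u < a$ plus the decay at infinity confine the relevant values of $u$ to a range where either the sign of $c(x)$ is controlled or a moving-plane/sliding device applies, but a naive global comparison need not work. This is exactly the same delicate point that makes Theorem~\ref{thm: main 1} itself nontrivial, and I expect the resolution to reuse the machinery (anti-symmetric maximum principles, Hopf-type lemma at the boundary, and the decay estimates) that has already been set up for the proof of Theorem~\ref{thm: main 1}. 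If a clean uniqueness statement is not available, an alternative route for Step 3 is to avoid uniqueness altogether and instead run the moving planes method directly: since $B_\rho(x_0)$ is invariant under every reflection through a hyperplane containing $x_0$, every such hyperplane is a valid plane of symmetry for the moving-planes argument (there is no need to ``find'' the right plane, as one must in the general overdetermined problem), so the argument of Theorem~\ref{thm: main 1} applied symmetrically in all directions yields the radial symmetry of $u$ about $x_0$ directly, and~\eqref{cond Neumann} is used only to start the procedure (to guarantee the strict inequality needed to begin moving the planes).
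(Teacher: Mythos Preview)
Your primary strategy --- prove uniqueness of the exterior Dirichlet solution and then use rotational invariance to deduce $u_R=u$ --- does not go through under the stated hypotheses, and you are right to be suspicious of it. The monotonicity of $f$ is assumed only for nonnegative \emph{small} values of $t$, so for the difference $w=v_1-v_2$ the linearized coefficient $c(x)=-\frac{f(v_1)-f(v_2)}{v_1-v_2}$ carries no sign information on the (possibly large) region where the $v_i$ are bounded away from $0$. No maximum principle for $w$ is available there, and since $w$ is neither anti-symmetric nor sign-constrained outside a compact set, the Fall--Jarohs comparison principles do not apply either. In short, the uniqueness lemma you need is simply not true at this level of generality, and Step~2 of your plan cannot be completed.

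Your fallback suggestion --- run the moving planes directly, using that $B_\rho(x_0)$ is reflection-symmetric through every hyperplane containing $x_0$ --- is exactly what the paper does, but two points in your description need correction. First, condition~\eqref{cond Neumann} is \emph{not} used to start the procedure; Lemma~\ref{lem: moving initial} relies only on the decay of $u$ at infinity and on the monotonicity of $f$ near $0$. The sign condition $(\pa_\nu)_s u<0$ enters instead in the \emph{continuation} step, namely Case~3 of Lemma~\ref{lem: moving continuation}, where the limit point $\bar x$ falls on $\pa G\cap T_\mu$: there one needs $(\pa_{e_N})_s u(\bar x)<0$, and this follows from $(\pa_\nu)_s u(\bar x)<0$ together with $\langle\nu(\bar x),e_N\rangle>0$. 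Only the \emph{sign}, not the constancy, of $(\pa_\nu)_s u$ is used at that point. Second, the ``conclusion'' step of Theorem~\ref{thm: main 1} --- where $w_{\bar\lambda}\equiv 0$ is forced via the Hopf lemma and Serrin's corner lemma, and which genuinely requires $(\pa_\nu)_s u=\mathrm{const.}$ --- is bypassed entirely here. The substitute is the symmetry of the ball: for $G=B_\rho(x_0)$ (say $x_0=0$) the critical position is $\bar\lambda(e)=0$ for \emph{every} direction $e$, hence also for $-e$. Running the moving planes in direction $e_N$ down to $\lambda=0$ yields $w_\lambda\ge 0$ in $\Sigma_\lambda$ for all $\lambda\ge 0$; running it in direction $-e_N$ gives the reverse inequality at $\lambda=0$; together these force $w_0\equiv 0$, i.e.\ symmetry about $\{x_N=0\}$, and the strict monotonicity in $x_N$ comes from $w_\lambda>0$ for $\lambda>0$. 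Repeating for all directions gives the result.
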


When compared with the local results, condition \eqref{cond Neumann} seems not to be natural. On the other hand, for the reasons already explained in Remark \ref{rem: on regularity} we could not omit it in general. Nevertheless, under additional assumptions on $f$ it can be dropped. 

\begin{corollary}\label{corol: radial}
If under the assumptions of Theorem \ref{thm: radial 1} we suppose that $f(a) \le 0$, then \eqref{cond Neumann} can be omitted, and condition $0 \le u<a$ can be replaced by $0 \le u \le a$. If moreover $f(t) \le 0$ for every $t \le 0$, then condition $0 \le u <a$ can be replaced by $u \ge 0$.
\end{corollary}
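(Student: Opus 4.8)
The plan is to deduce Corollary~\ref{corol: radial} from Theorem~\ref{thm: radial 1} by the same comparison trick used for Corollary~\ref{corol: main 1} and Corollary~\ref{corol: subharmonicity}, that is, by showing that the extra hypotheses on $f$ force the sign condition~\eqref{cond Neumann} (and, in the second part, the inequality $u<a$) to hold automatically, so that Theorem~\ref{thm: radial 1} applies verbatim. The whole point is that~\eqref{cond Neumann} was only needed because we cannot afford $\mathcal{C}^{2}$ regularity up to $\partial B_\rho(x_0)$ (see Remark~\ref{rem: on regularity}); once $f(a)\le 0$ we can recover the strict negativity of the $s$-normal derivative by a barrier/Hopf-type argument instead of a second-order Taylor expansion.

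First I would treat the case $f(a)\le 0$ under the weaker assumption $0\le u\le a$. Set $w:=a-u$, so $w\ge 0$ in $\R^N\setminus\overline{B_\rho(x_0)}$, $w=0$ in $\overline{B_\rho(x_0)}$, and in $\R^N\setminus\overline{B_\rho(x_0)}$ one has $(-\Delta)^s w = -f(u)$. I would like to say $(-\Delta)^s w\ge 0$ there; this is immediate on the subset where $f(u)\le 0$, in particular near the boundary where $u$ is close to $a$ and $f$ is non-increasing for values near $a$... more honestly, since $f$ is non-increasing for nonnegative and \emph{small} values of $t$, the clean statement is near $t=0$, so the relevant monotonicity near $t=a$ has to come from $f(a)\le 0$ together with continuity only in a neighbourhood of $a$. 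The robust route is: near $\partial B_\rho(x_0)$ we have $u$ close to $a$, hence $f(u)$ close to $f(a)\le 0$; if $f(a)<0$ this already gives $(-\Delta)^s w>0$ near the boundary, if $f(a)=0$ we use the non-increasing character of $f$ on $[0,a]$ is \emph{not} assumed here, so instead I would invoke the strong maximum principle / nonlocal Hopf lemma of Fall--Jarohs (cited as \cite{FallJarohs}) applied to $w$: since $w\ge 0$, $w\not\equiv 0$, $w=0$ on a ball, and $(-\Delta)^s w\ge 0$ in a one-sided neighbourhood of $\partial B_\rho(x_0)$ inside $\R^N\setminus\overline{B_\rho(x_0)}$, the nonlocal Hopf lemma yields
\[
(\pa_\nu)_s w(x_0) = \lim_{t\to 0^+}\frac{w(x_0+t\nu(x_0))-w(x_0)}{t^s} > 0
\]
for every $x_0\in\partial B_\rho(x_0)$, which is exactly $(\pa_\nu)_s u<0$, i.e.~\eqref{cond Neumann}. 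Moreover the same strong maximum principle gives $u<a$ in $\R^N\setminus\overline{B_\rho(x_0)}$, upgrading $0\le u\le a$ to $0\le u<a$. Then Theorem~\ref{thm: radial 1} applies and gives the radial symmetry and monotonicity.

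For the second assertion, assume in addition $f(t)\le 0$ for every $t\le 0$ and only $u\ge 0$; I would first show $u\le a$ by a comparison argument. Consider $v:=(u-a)_+$; it is supported in $\R^N\setminus\overline{B_\rho(x_0)}$, vanishes at infinity, and on the open set $\{u>a\}$ we have $(-\Delta)^s u=f(u)$ with $u>a>0$. One cannot directly sign $f(u)$ there, but testing the weak formulation against $v$ and using the nonlocal integration-by-parts inequality $\langle (-\Delta)^s u, v\rangle \ge \|v\|_{H^s}^2 \ge 0$ type estimate (more precisely the fact that $\iint (u(x)-u(y))(v(x)-v(y))\,d\mu \ge \iint |v(x)-v(y)|^2\,d\mu$), together with $f(u)\le f(a)\le 0$ on $\{u>a\}$ once $f(a)\le0$... again this needs monotonicity only near $a$; the cleanest closing is: on $\{u>a\}$ the hypothesis $f(t)\le0$ for $t\le 0$ does not apply, so I instead use $f(a)\le 0$ from the first part combined with the fact that $f$ being locally Lipschitz and non-increasing for small nonnegative $t$ is irrelevant here — what we genuinely need and may assume is $f(a)\le 0$, which is already in force. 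Concluding $v\equiv 0$, hence $u\le a$, reduces us to the case $0\le u\le a$ already handled. The main obstacle is precisely this regularity-versus-sign interplay: since $u$ is only $\mathcal{C}^s$, we must replace every second-order boundary computation by a nonlocal Hopf lemma and a barrier comparison, and care is needed that the sign hypotheses on $f$ are used only where $f$ is controlled (near $t=0$ via the non-increasing assumption, and at $t=a$ via $f(a)\le 0$). Once this is set up correctly, the corollary follows by direct citation of Theorem~\ref{thm: radial 1}.
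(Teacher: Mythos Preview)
Your overall plan---reduce to Theorem~\ref{thm: radial 1} by showing that $f(a)\le 0$ forces $u<a$ in $\R^N\setminus\overline{B_\rho(x_0)}$ and $(\pa_\nu)_s u<0$ on $\pa B_\rho(x_0)$---is exactly the paper's approach (it is the analogue of the proofs of Corollaries~\ref{corol: main 1} and~\ref{corol: subharmonicity}). However, the execution has a genuine gap in each part.

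\textbf{Part 1.} Your key claim ``$(-\Delta)^s w\ge 0$ in a one-sided neighbourhood of $\pa B_\rho(x_0)$'' is not justified when $f(a)=0$. You only have $(-\Delta)^s w=-f(u)$, and near the boundary $u$ is close to $a$; but $f$ is merely locally Lipschitz near $a$ (the non-increasing hypothesis is near $t=0$, not near $t=a$), so $f(u)$ may well be positive for $u$ slightly below $a$. The fix, which is what the paper does for Corollary~\ref{corol: main 1}, is to absorb the nonlinearity into a bounded potential: set
\[
c(x):=-\frac{f(u(x))-f(a)}{u(x)-a}\quad\text{(and }0\text{ where }u=a\text{)},
\]
so that $(-\Delta)^s(a-u)+c(x)(a-u)=-f(a)\ge 0$ everywhere in $\R^N\setminus\overline{B_\rho(x_0)}$, with $c\in L^\infty$ but no sign. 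The Fall--Jarohs strong maximum principle and Hopf lemma (Proposition~3.3 plus Remark~3.5 in \cite{FallJarohs}) are stated precisely for equations with a bounded, unsigned zero-order term, and then give both $u<a$ and $(\pa_\nu)_s u<0$ directly. Your attempt to get a pure supersolution inequality without the potential cannot be closed under the stated hypotheses.

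\textbf{Part 2.} You correctly noticed that the printed hypothesis ``$f(t)\le 0$ for every $t\le 0$'' is useless on $\{u>a\}$; the intended hypothesis (compare Corollary~\ref{corol: subharmonicity}) is $f(t)\le 0$ for $t\ge 0$. With that correction, your test-function route and the inequality ``$f(u)\le f(a)\le 0$ on $\{u>a\}$'' still does not follow, since no monotonicity of $f$ near $a$ is assumed. The paper's argument is simpler and avoids this: if $u>a$ somewhere, then by the decay at infinity $u$ has an interior maximum $\bar x$ in $\R^N\setminus\overline{B_\rho(x_0)}$; a direct computation gives $(-\Delta)^s u(\bar x)\ge 0$, while $f(u(\bar x))\le 0$ since $u(\bar x)\ge 0$, forcing $(-\Delta)^s u(\bar x)=0$ and hence $u$ constant, contradicting $u(x)\to 0$. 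This yields $u\le a$, and Part~1 finishes the job.
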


It is clear that similar symmetry results hold for Dirichlet problems in annuli. 

An interesting limit case takes place when $G=\{x_0\}$ is a single point of $\R^N$. The reader can easily check that the proof of Theorem \ref{thm: radial 1} works also in this setting. With some extra work, we can actually obtain a better result. 

\begin{theorem}\label{thm: radial point}
Let us assume that there exists a bounded weak solution of 
\[
\begin{cases}
(-\Delta)^s u=f(u) & \text{in $\R^N \setminus \{x_0\}$} \\
u(x_0)=a.
\end{cases}
\]
If $f(t)$ is a locally Lipschitz functions, non-increasing for nonnegative and small values of $t$, and $u$ satisfies
\[
\text{$0 \le u \le a$ in $\R^N \setminus \{x_0\}$ and $u(x) \to 0$ as $|x| \to +\infty$},
\]
then $u$ is radially symmetric and radially decreasing with respect to $x_0$. 
\end{theorem}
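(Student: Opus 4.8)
The plan is to reduce Theorem~\ref{thm: radial point} to (essentially) Theorem~\ref{thm: radial 1} applied on exterior sets of the form~$\R^N\setminus\overline{B_\rho(x_0)}$, by a limiting procedure in~$\rho$, and then to upgrade the conclusion using an extra removability/strong-maximum-principle argument at the point~$x_0$. Throughout I will assume without loss of generality that~$x_0=0$.

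\medskip

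First I would record the basic properties of the solution. Since~$u$ is a bounded weak solution of~$(-\Delta)^s u=f(u)$ in~$\R^N\setminus\{0\}$ with~$0\le u\le a$ and~$u(x)\to0$ at infinity, the point~$\{0\}$ has zero~$s$-capacity (for~$s\in(0,1)$ and~$N\ge1$, a single point is removable for bounded~$s$-harmonic-type functions, since~$\mathrm{Cap}_s(\{0\})=0$), so~$u$ extends to a bounded weak solution of~$(-\Delta)^s u=f(u)$ in all of~$\R^N$ once we know the value at~$0$; but we do \emph{not} want to discard the datum~$u(0)=a$, so instead I keep~$\{0\}$ in the ``obstacle''. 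The key point is: by the regularity theory (Theorem~\ref{thm: regularity}) applied away from~$0$, and by boundedness, $u\in\mathcal{C}^s(\R^N)$ with~$u$ continuous; moreover, by the maximum principle~$0\le u\le a$ forces, via the strong maximum principle for~$(-\Delta)^s$ (using that~$f$ is locally Lipschitz and bounded on~$[0,a]$), that either~$u\equiv a$ — excluded since~$u\to0$ at infinity — or~$0<u<a$ in~$\R^N\setminus\{0\}$. Also~$u(x)\to a$ as~$x\to 0$ by continuity, since~$u(0)=a=\sup u$ and~$u$ is continuous at~$0$.

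\medskip

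Next I would run the moving planes method directly, exactly as in the proof of Theorem~\ref{thm: radial 1}, but now with the ``hole'' being the single point~$\{0\}$ instead of a ball~$B_\rho(x_0)$. For a unit direction~$e$ and~$\lambda\in\R$ let~$T_\lambda=\{x\cdot e=\lambda\}$, let~$\Sigma_\lambda=\{x\cdot e>\lambda\}$, and let~$u_\lambda$ be the reflection of~$u$ across~$T_\lambda$; set~$w_\lambda=u_\lambda-u$, an anti-symmetric function. For~$\lambda$ large and positive one has~$0\in\Sigma_\lambda$ is false — rather, $0\notin\overline{\Sigma_\lambda}$ — so that on~$\Sigma_\lambda$ the function~$w_\lambda$ satisfies a linear anti-symmetric inequality~$(-\Delta)^s w_\lambda + c_\lambda(x)w_\lambda = 0$ in~$\Sigma_\lambda$ with~$w_\lambda\ge0$ on~$\R^N\setminus\Sigma_\lambda$ far out (here using~$u\to0$ at~$\infty$ to start the procedure: for~$\lambda$ large, $u$ on~$\Sigma_\lambda$ is small and~$u_\lambda$ is comparable, plus the decay gives the sign). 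I then decrease~$\lambda$ and use the comparison principle for anti-symmetric functions and the nonlocal Serrin corner lemma of Fall--Jarohs~\cite{FallJarohs} — exactly the tools already invoked for the other theorems — to show that the set of ``good''~$\lambda$ is open and closed, so the procedure can be continued until~$T_\lambda$ reaches~$\{0\}$ or until a point where the moving plane touches. The crucial difference from Theorem~\ref{thm: radial 1} is that here there is no Neumann datum to exploit (the ``boundary'' is a point), so the stopping of the planes must be forced purely by the position of the point~$\{0\}$ together with the decay at infinity: the plane can never cross~$0$, because if~$T_\lambda$ passed~$0$ then~$u_\lambda$ would be evaluated (near the reflected point of~$0$) against a datum that contradicts~$u<a$; conversely by symmetry of the argument in~$\lambda\to-\infty$ one concludes the critical plane in direction~$e$ is exactly~$\{x\cdot e=0\}$. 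Since~$e$ is arbitrary, $u$ is symmetric about~$0$ in every direction, hence radial about~$0$, and the same comparison gives strict monotonicity along rays, i.e.\ radially decreasing.

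\medskip

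The main obstacle I anticipate is \emph{starting} the moving planes and handling the degeneracy of the ``domain with a point removed'': one must verify that the linearized anti-symmetric inequality genuinely holds on~$\Sigma_\lambda$ including near the reflected copy of the deleted point, and that the comparison principle applies even though the ``obstacle'' is a null set — in particular, that the datum~$u(0)=a$ (a single point) does exert enough control. The clean way around this is the limiting argument I mentioned: apply Theorem~\ref{thm: radial 1} (or rather its proof) to~$u$ on~$\R^N\setminus\overline{B_\rho(0)}$ for each small~$\rho>0$ — the hypotheses~$0\le u<a$, $u\to0$, $f$ non-increasing near~$0$ all hold, and the only missing ingredient is the Neumann sign condition~\eqref{cond Neumann} on~$\partial B_\rho$, which however we do \emph{not} know a priori. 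So instead I would argue the symmetry of~$u$ by the direct moving planes run above, treating~$\{0\}$ as the obstacle and using that~$u$ is continuous with~$u\to a$ at~$0$ to supply the needed comparison on the obstacle; the monotonicity~$\partial_r u<0$ then comes out of the moving-plane inequalities, and this monotonicity retroactively yields~$(\partial_\nu)_s u\le 0$ on every sphere~$\partial B_\rho(0)$, closing any gap with the hypotheses of Theorem~\ref{thm: radial 1}. The ``extra work'' alluded to in the statement is precisely this careful treatment of a single-point obstacle inside the nonlocal moving-plane machinery, plus the strong maximum principle input that promotes~$0\le u\le a$ to~$0<u<a$ off the point.
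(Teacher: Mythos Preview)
Your overall plan—running the moving planes directly with the single point $\{0\}$ as the obstacle—is the paper's approach. However, there is a genuine gap in your argument, precisely at the point where the ``extra work'' alluded to in the statement enters.

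The gap is your claim that the strong maximum principle forces $u < a$ in $\R^N \setminus \{0\}$. This does \emph{not} follow from the hypotheses. If $v := a - u \ge 0$ vanished at some $x_1 \ne 0$, the equation at $x_1$ reads $(-\Delta)^s v(x_1) = -f(a)$, while the integral representation gives $(-\Delta)^s v(x_1) < 0$ (since $v \ge 0$ and $v \not\equiv 0$); together these only force $f(a) > 0$, which is no contradiction. The strong maximum principle you invoke would require a supersolution inequality $(-\Delta)^s v + c(x) v \ge 0$, and that needs $f(a) \le 0$—not assumed. Without $u < a$, your mechanism for ruling out that the plane stalls at some $\mu > 0$ (``$u(0^\mu) = a$ contradicts $u < a$'') breaks down.

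The paper replaces this unjustified step with an intermediate \emph{monotonicity lemma}: once $w_\lambda > 0$ in $\Sigma_\lambda$ for all $\lambda > \mu := \inf \Lambda$, one deduces directly from $w_\lambda > 0$ that $u$ is \emph{strictly decreasing} in $x_N$ throughout $\{x_N > \mu\}$. Then, in the continuation step, whenever one is led to $u(0', 2\mu) = a$ with $\mu > 0$ (either because $w_\mu \equiv 0$, or because the minimizing sequence $x_k$ converges to $(0',2\mu)$), the monotonicity yields $u(0', x_N) > a$ for $x_N \in (\mu, 2\mu)$, contradicting only the hypothesis $u \le a$. The remaining boundary case $\bar x \in T_\mu$ is handled by interior $\mathcal{C}^1$ regularity and the Hopf-type Proposition~\ref{prop: new hopf}, exactly as in Case~1 of Lemma~\ref{lem: moving continuation}; the Serrin corner lemma you mention is irrelevant here, since a single point has no corners and the orthogonality alternative never arises.

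Two smaller remarks: your $\rho \to 0$ limiting idea is, as you yourself note, blocked by the missing Neumann sign and plays no role; and the removability/capacity discussion is a red herring—the paper never extends the equation across $0$, working with $\Sigma_\lambda = H_\lambda \setminus \{0^\lambda\}$ throughout.
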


Notice that no condition on the $s$-normal derivative is needed. For this reason, we omitted the assumption $u \in \mathcal{C}^s(\R^N)$, which anyway, by Theorem \ref{thm: regularity}, would be natural also in this context. As a straightforward corollary, we obtain a variant of the Gidas-Ni-Nirenberg symmetry result for the fractional Laplacian.

\begin{corollary}
Let $u$ be a nonnegative bounded weak solution of $(-\Delta)^s u=f(u)$ in $\R^N$.
If $f(t)$ is a locally Lipschitz functions, non-increasing for nonnegative and small values of $t$, and $u$ satisfies
\[
\text{$u(x) \to 0$ as $|x| \to +\infty$},
\]
then $u$ is radially symmetric and radially decreasing with respect to a point of $\R^N$.
\end{corollary}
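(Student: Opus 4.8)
The plan is to deduce this corollary from Theorem~\ref{thm: radial point} by an elementary observation: a nonnegative bounded weak solution of $(-\Delta)^s u = f(u)$ on all of $\R^N$ with $u(x)\to 0$ as $|x|\to+\infty$ automatically attains its supremum at some point~$x_0\in\R^N$, and then restricting the equation to $\R^N\setminus\{x_0\}$ puts us exactly in the hypotheses of Theorem~\ref{thm: radial point} with $a := \sup_{\R^N} u = u(x_0)$.

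First I would argue that the supremum is attained. Since $u$ is bounded, $M := \sup_{\R^N} u \ge 0$ is finite. If $M=0$ then $u\equiv 0$ and the conclusion is trivial (any point serves as centre). If $M>0$, pick a maximizing sequence $x_n$ with $u(x_n)\to M$; because $u(x)\to 0$ at infinity, the sequence $(x_n)$ must stay in a bounded region, so up to a subsequence $x_n\to x_0$. Here one uses that weak solutions of $(-\Delta)^s u = f(u)$ with $u$ bounded are continuous (indeed $\mathcal{C}^s$, by Theorem~\ref{thm: regularity}, since $f(u)$ is bounded), hence $u(x_0) = \lim_n u(x_n) = M$. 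Thus $u(x_0) = a$ with $a := M > 0$, and by construction $0 \le u \le a$ on $\R^N$, in particular on $\R^N\setminus\{x_0\}$.

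Next I would simply feed this into Theorem~\ref{thm: radial point}: the pair $(u, x_0)$ is a bounded weak solution of $(-\Delta)^s u = f(u)$ in $\R^N \setminus \{x_0\}$ with $u(x_0) = a$, the function $f$ is locally Lipschitz and non-increasing for nonnegative and small values of $t$, $0 \le u \le a$ in $\R^N\setminus\{x_0\}$, and $u(x)\to 0$ as $|x|\to+\infty$ — exactly the hypotheses of that theorem. The conclusion is that $u$ is radially symmetric and radially decreasing about $x_0$, which is precisely the assertion of the corollary (with the point of $\R^N$ being the maximum point $x_0$).

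The only genuinely substantive point, and hence the main obstacle, is the compactness step: ensuring the supremum is actually achieved rather than merely approached. This relies on (i) the decay hypothesis $u(x)\to 0$, which confines any maximizing sequence to a compact set once $M>0$, and (ii) the continuity of $u$, which is not assumed outright in the corollary but follows from interior regularity for the fractional Laplacian with bounded right-hand side (the $\mathcal{C}^s$ regularity recalled in the appendix). Once the maximum point is in hand, everything else is a direct citation of Theorem~\ref{thm: radial point}, so no further work — in particular no new use of moving planes — is needed.
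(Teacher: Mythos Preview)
Your proposal is correct and is exactly the argument the paper has in mind: the corollary is stated immediately after Theorem~\ref{thm: radial point} as ``a straightforward corollary'', with no separate proof given, and the only missing step is precisely the one you supply---using the decay at infinity together with interior regularity to locate a global maximum point~$x_0$ and set~$a:=u(x_0)$. Your handling of the trivial case $u\equiv 0$ and of the continuity issue (via the interior regularity recalled in Subsection~\ref{sub: regularity}) is fine.
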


Analogue symmetry results have been proved in \cite{FelmerWang}, for a different class of nonlinearities $f$ (having non-empty intersection with the one considered here). We point out that, with respect to \cite{FelmerWang}, we do not require any condition at infinity on the decay of $u$.

\subsection{Outline of the paper} The basic technical definitions
needed in this paper will be recalled in Section~\ref{sec: preliminaries}.

In Section \ref{sec: over exterior}
we consider overdetermined problems in exterior sets, proving Theorems \ref{thm: main 1}, \ref{thm: main 1 prime} and Corollaries \ref{corol: main 1}, \ref{corol: subharmonicity}. Section \ref{sec: over annular} is devoted to overdetermined problems in annular sets. In Section \ref{sec: radial} we study the symmetry of the solutions when the domain is a priori supposed to be radial. In Section \ref{sec: existence} we present some existence results. Finally, in a brief appendix we discuss the regularity of bounded weak solutions of Dirichlet fractional problems in unbounded sets.

\medskip

\noindent \textbf{Acknowledgements.} We wish to thank Prof. Boyan Sirakov for useful discussions on his paper \cite{Sirakov}. Part of this work was carried out when the first author was visiting the Weiestrass Institut f\"ur Angewandte Analysis und Stochastik in Berlin, and he wishes to thank for the hospitality.

\section{Definitions and preliminaries}\label{sec: preliminaries}

We collect in this section some definitions and results which will be used in the proofs of the main theorems. 

\subsection{Definitions} Let $N \ge 1$ and $s \in (0,1)$. For a function $u \in \mathcal{C}^\infty_c(\R^N)$, the fractional $s$-Laplacian is defined by
\begin{equation}\label{integral representation}
\begin{split}
(-\Delta)^s u(x) :&= c_{N,s} \pv \int_{\R^N} \frac{u(x)-u(y)}{|x-y|^{N+2s}}\,dy \\
& = c_{N,s} \lim_{\eps \to 0^+}  \int_{ \{ |y-x| > \eps \} } \frac{u(x)-u(y)}{|x-y|^{N+2s}} \, dy ,
\end{split}
\end{equation}
where $c_{N,s}$ is a normalization constant, and $\pv$ stays for ``principal value". In the rest of the paper, to simplify the notation we will always omit both $c_{N,s}$ and $\pv$. The bilinear form associated to the fractional Laplacian is
\[
\mathcal{E}(u,v) := \frac{c_{N,s}}{2} \int_{\R^{2N}} \frac{(u(x)-u(y)) (v(x)-v(y) )}{|x-y|^{N+2s}}\,dy.
\]
It can be proved that $\mathcal{E}$ defines a scalar product, and we denote by $\mathcal{D}^s(\R^N)$ the completion of $\mathcal{C}^\infty_c(\R^N)$ with respect to the norm induced by $\mathcal{E}$. We also introduce, for an arbitrary open set $\Omega \subset \R^N$, the space $H^s(\Omega):= L^2(\Omega) \cap \mathcal{D}^s(\R^N)$. It is a Hilbert space with respect to the scalar product $\mathcal{E}(u,v) + \langle u,v \rangle_{L^2(\Omega)}$, where $\langle \cdot, \cdot \rangle_{L^2(\Omega)}$ stays for the scalar product in $L^2(\Omega)$. The case $\Omega=\R^N$ is admissible. We write that $u \in H^s_{\loc}(\R^N)$ if $u \in H^s(K)$ for every compact set $K \subset \R^N$.
 
A function $w$ is a weak supersolution of
\[
(-\Delta)^s w \ge  g(x)  \qquad \text{in $\Omega$},
\]
if $w  \in \mathcal{D}^s(\R^N)$ and
\begin{equation}\label{weak sol}
\mathcal{E}(w,\varphi) \ge \int_{\Omega} g(x) \varphi(x) \, dx \qquad \forall \varphi \in \mathcal{C}^\infty_c(\Omega), \ \varphi \ge 0.
\end{equation}
If the opposite inequality holds, we write that $w$ is a weak subsolution. If $w \in \mathcal{D}^s(\R^N)$ and equality holds in \eqref{weak sol} for every $\varphi \in \mathcal{C}^\infty_c(\Omega)$, then we write that $w$ is a weak solution of 
\begin{equation}\label{vis sol}
(-\Delta)^s w =  g(x)  \qquad \text{in $\Omega$}.
\end{equation}
Since we will always consider weak solutions (supersolutions, subsolutions), the adjective weak will sometimes be omitted.

\subsection{Regularity results}\label{sub: regularity}
Let $u \in L^\infty(\R^N) \cap \mathcal{D}^s(\R^N)$ be a weak solution of
\begin{equation}\label{pb: Dirichlet}
\begin{cases}
(-\Delta)^s u= f(u) & \text{in $\R^N \setminus \overline{G}$} \\
u=a & \text{in $\overline{G}$},
\end{cases}
\end{equation}
with $f$ locally Lipschitz continuous. By Theorem \ref{thm: regularity} in the appendix, we know that
\begin{itemize}
\item $u \in \mathcal{C}^{1,\sigma}(\R^N \setminus \overline{G})$ for some $\sigma \in (0,1)$ (\emph{interior regularity}), and the integral representation \eqref{integral representation} holds point-wise;
\item $u \in \mathcal{C}^s(\R^N)$, and in particular $u/\delta^s \in \mathcal{C}^{0,\gamma}(\R^N \setminus G)$ for some $\gamma \in (0,1)$, where $\delta$ denotes the distance from the boundary of $G$ (\emph{boundary regularity}).
\end{itemize}
Since any weak solution $u \in \mathcal{C}^s(\R^N)$ of \eqref{pb: Dirichlet} such that $u(x) \to 0$ as $|x| \to +\infty$ is in $L^\infty(\R^N)$ (and also in $\mathcal{D}^s(\R^N)$, by definition of weak solution), the previous regularity results will be used throughout the rest of the paper.

\subsection{Comparison principles}

We recall a
strong maximum principle
and a Hopf's lemma for anti-symmetric functions \cite[Proposition 3.3 and Corollary 3.4]{FallJarohs}. In the quoted paper, the strong maximum principle is stated under the assumption that $\Omega$ is bounded, but for the proof this is not necessary. As a result, the following holds.

\begin{proposition}[Fall, Jarohs \cite{FallJarohs}]\label{STRONG}
Let $H \subset \R^n$ be a half-space, and let $\Omega \subset H$ (not necessarily bounded). Let $c \in L^\infty(\Omega)$, and let $w$ satisfy
\[
\begin{cases}
(-\Delta )^s w + c(x) w \ge 0 & \text{in $\Omega$} \\
w(x) = -w(\bar x) & \text{in $\R^N$} \\
w \ge 0 & \text{in $H$},
\end{cases}
\]
where $\bar x$ denotes the reflection of $x$ with respect to $\partial H$. Then either $w>0$ in $\Omega$, or $w \equiv 0$ in $H$. Furthermore, if $x_0 \in \partial \Omega \setminus \partial H$ and $w(x_0) = 0$, then $(\pa_\eta)_s w(x_0)<0$, where $\eta$ is the outer unit normal vector of $\Omega$ in $x_0$. 
\end{proposition}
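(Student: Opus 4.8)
The strategy is to exploit the anti-symmetry of $w$ to recast $(-\Delta)^s$ on the half-space as an integro-differential operator with strictly positive kernel — which makes the strong maximum principle immediate — and then to combine this with an explicit barrier for the fractional Laplacian to obtain the Hopf-type conclusion. Concretely, for $x\in H$ I would split the principal value integral defining $(-\Delta)^s w(x)$ over $H$ and over $\R^N\setminus H$; on the second piece the substitution $y=\bar z$, together with $w(\bar z)=-w(z)$ and the isometry identity $|x-\bar z|=|\bar x-z|$, gives, up to the constant $c_{N,s}$,
\[
(-\Delta)^s w(x)=\int_H\rho_s(x,z)\bigl(w(x)-w(z)\bigr)\,dz+2\,w(x)\int_H|\bar x-z|^{-N-2s}\,dz,
\]
where $\rho_s(x,z):=|x-z|^{-N-2s}-|\bar x-z|^{-N-2s}$. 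Since $|x-z|<|\bar x-z|$ for $x,z\in H$, the kernel $\rho_s$ is strictly positive for a.e.\ $z\in H$ and the last term is a nonnegative zeroth order contribution; setting $\tilde c(x):=c(x)+2c_{N,s}\int_H|\bar x-z|^{-N-2s}\,dz\ge c(x)$ (finite off $\partial H$), the hypothesis becomes $c_{N,s}\int_H\rho_s(x,z)(w(x)-w(z))\,dz\ge-\tilde c(x)w(x)$ in $\Omega$. For a merely weak supersolution these manipulations are carried out in the weak formulation; in the situations in which the proposition is applied the interior regularity recalled in Section~\ref{sec: preliminaries} makes them pointwise.

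\textbf{Strong maximum principle.} Suppose $w\not\equiv0$ in $H$ and, for contradiction, that $w(x_1)=0$ for some $x_1\in\Omega$. Since $\Omega\subset H$ and $w\ge0$ in $H$, the point $x_1$ is a global minimum of $w$ on $H$; evaluating the identity at $x_1$ and using $w(x_1)=0$ gives $c_{N,s}\int_H\rho_s(x_1,z)\bigl(0-w(z)\bigr)\,dz\le0$, while the inequality above forces this same quantity to be $\ge0$ (the zeroth order term vanishes). Hence $\int_H\rho_s(x_1,z)w(z)\,dz=0$, and since $\rho_s(x_1,\cdot)>0$ a.e.\ on $H$ with $w\ge0$, this forces $w\equiv0$ a.e.\ in $H$, a contradiction. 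Therefore either $w\equiv0$ in $H$, or $w>0$ everywhere in $\Omega$.

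\textbf{Hopf's lemma.} Assume $w>0$ in $\Omega$; let $x_0\in\partial\Omega\setminus\partial H$ with $w(x_0)=0$, and let $\nu$ be the inner unit normal of $\Omega$ at $x_0$, so that the assertion $(\partial_\eta)_s w(x_0)<0$ amounts to $\liminf_{t\to0^+}t^{-s}w(x_0+t\nu)>0$. By the $\mathcal{C}^2$ regularity, fix an interior ball $B_R(x_1)\subset\Omega$ tangent to $\partial\Omega$ at $x_0$, with $R$ small enough that $\overline{B_R(x_1)}\subset H$; by the strong maximum principle $w\ge m>0$ on $\overline{B_{R/2}(x_1)}$. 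I would then build a barrier $\phi\ge0$ that equals $m$ on $B_{R/2}(x_1)$, equals $\eps(R^2-|x-x_1|^2)_+^s$ on the annulus $A:=B_R(x_1)\setminus\overline{B_{R/2}(x_1)}$, and vanishes elsewhere. Writing $\phi=\eps(R^2-|x-x_1|^2)_+^s+g$, with $g$ supported in $\overline{B_{R/2}(x_1)}$, and using that $(-\Delta)^s(R^2-|x-x_1|^2)_+^s$ is a positive constant on $B_R(x_1)$ while $(-\Delta)^s g\le-\frac{m}{2}\int_{B_{R/2}(x_1)}|x-y|^{-N-2s}\,dy<0$ on $A$, one checks that for $R$ and $\eps$ small (in terms of $\|c\|_{L^\infty}$ and $\dist(x_0,\partial H)$) the anti-symmetrization $\Psi(x):=\phi(x)-\phi(\bar x)$ satisfies $(-\Delta)^s\Psi+c\,\Psi\le0$ in $A$. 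Since $w-\Psi$ is anti-symmetric and is $\ge0$ on $H\setminus A$ (because $\Psi=\phi\le m\le w$ on $\overline{B_{R/2}(x_1)}$ and $\Psi=0\le w$ on $H\setminus B_R(x_1)$), a maximum principle for anti-symmetric functions on the small set $A$ — valid because the effective nonlocal coefficient $c_{N,s}\int_{H\setminus A}\rho_s(x,z)\,dz$ blows up as $\mathrm{diam}(A)\to0$ and thus overcomes $\|c\|_{L^\infty}$ — yields $w\ge\Psi$ in $A$. Finally $\Psi(x_0+t\nu)=\eps\,t^s(2R-t)^s$, so $\liminf_{t\to0^+}t^{-s}w(x_0+t\nu)\ge\eps(2R)^s>0$, which is the claim.

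\textbf{Main obstacle.} The strong maximum principle is a one-line consequence of the representation formula. The real work is the Hopf part, and in particular the barrier: $\Psi$ must be simultaneously a subsolution of the linearised equation on the annulus, compatible with the anti-symmetry (so that the comparison with $w$, which is negative on $\R^N\setminus H$, can be run), and of the exact vanishing order $\delta^s$ at $x_0$. Nonlocality prevents localising freely: the comparison has to be performed on a set $A$ small enough that a maximum principle for anti-symmetric functions holds, and all long-range contributions — from the bump on $B_{R/2}(x_1)$, from $\R^N\setminus B_R(x_1)$, and the reflected term created by the anti-symmetrization — must be estimated and absorbed by taking $R$ and $\eps$ small.
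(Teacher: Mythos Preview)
The paper does not prove this proposition: it is quoted verbatim from Fall--Jarohs \cite{FallJarohs} (Proposition~3.3 and Corollary~3.4 there), with only the remark that boundedness of~$\Omega$ is not used. Your argument is essentially the one in that reference. The kernel decomposition you write down,
\[
(-\Delta)^s w(x)=c_{N,s}\int_H \rho_s(x,z)\bigl(w(x)-w(z)\bigr)\,dz+2c_{N,s}\,w(x)\int_H|\bar x-z|^{-N-2s}\,dz,
\]
with $\rho_s(x,z)=|x-z|^{-N-2s}-|\bar x-z|^{-N-2s}>0$ on $H\times H$, is the standard way to see that the anti-symmetric problem behaves like a local strong maximum principle with strictly positive kernel; your one-line contradiction at an interior zero is correct. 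For the Hopf part, your barrier $\eps(R^2-|x-x_1|^2)_+^s$ plus a bump on $B_{R/2}(x_1)$, anti-symmetrized as $\Psi=\phi-\phi\circ R$, is the same construction as in \cite{FallJarohs}; note that on $A$ one has $\Psi=\phi$ (since $\overline{B_R(x_1)}\subset H$ forces $\phi(\bar x)=0$ there), and the extra term $-(-\Delta)^s\phi(\bar x)=c_{N,s}\int_{B_R(x_1)}\phi(y)|\bar x-y|^{-N-2s}\,dy$ is indeed bounded by a constant depending on $\dist(x_0,\partial H)$ and can be absorbed for $\eps$ small. The small-measure comparison you invoke on $A$ is exactly Proposition~\ref{SMALL} in the paper. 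One small imprecision: you should make explicit that the pointwise evaluation at the interior zero $x_1$ uses the interior regularity $w\in\mathcal{C}^{1,\sigma}_{\mathrm{loc}}$ available in the applications (which you do flag), since for a bare weak supersolution the argument would need to be run through test functions.
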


\begin{remark}
If $\Omega \subset H$ shares part of its boundary with the hyperplane $\pa H$, and $x_0 \in \partial H \cap \pa \Omega$, then we cannot apply the Hopf's lemma in $x_0$, since it is necessary to suppose that it lies on the boundary of a ball compactly contained in $H$. This assumption is used in the proof in \cite{FallJarohs}.
\end{remark}

In the proof of Theorem \ref{thm: main 1}, we shall need a version of the Hopf's lemma allowing to deal with points of $\partial \Omega \cap \partial H$. To be more precise, let $\Omega'$ be a $\mathcal{C}^2$ set in $\R^N$, symmetric with respect to the hyperplane $T$, and let $H$ be a half-space such that $T=\partial H$. Let $\Omega := H \cap \Omega'$, and let us assume that $w \in \mathcal{C}^s(\R^N)$ satisfies
\[
\begin{cases}
(-\Delta)^s w + c(x) w = 0 & \text{in $\Omega'$} \\
w(x) = -w(\bar x) \\
w > 0 & \text{in $\Omega$} \\
w \ge 0 & \text{in $H$},
\end{cases}
\]  
where $c \in L^\infty(\Omega')$ and $\bar x$ denotes the reflection of $x$ with respect to $T$. We note that $\partial \Omega \cap T$ is divided into two parts: a regular part of Hausdorff dimension $N-1$, which is a relatively open set in $T$, and a singular part of Hausdorff dimension $N-2$, which is $\partial \Omega' \cap T$. We also note that, by anti-symmetry, $w(x)=0$ for every $x \in T$.

\begin{proposition}\label{prop: new hopf}
In the previous setting, if $x_0$ is a point in the regular part of $\pa \Omega \cap T$, then
\begin{equation}\label{1st}
-\liminf_{t \to 0^+} \frac{w(x_0-t\nu(x_0))}{t} <0,
\end{equation}
where $\nu(x_0)$ is the outer unit normal vector to $T=\partial H$ in $x_0$.
\end{proposition}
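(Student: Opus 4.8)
The goal is to upgrade the Hopf lemma of Fall--Jarohs (Proposition~\ref{STRONG}) so that it applies at a point $x_0$ lying in the \emph{regular} part of $\partial\Omega\cap T$, which is precisely the case excluded by the remark following Proposition~\ref{STRONG}. The obstruction there is that $x_0$ sits on the hyperplane $T=\partial H$, so one cannot place a ball compactly contained in $H$ touching $\partial\Omega$ at $x_0$. The strategy is to \emph{move $x_0$ into the interior of $H$} by exploiting the fact that $\Omega'$ is symmetric about $T$ and of class $\mathcal{C}^2$: near $x_0$, since $x_0$ is in the regular part, $\partial\Omega'$ meets $T$ transversally, so $\partial\Omega=\partial(\Omega'\cap H)$ looks locally like the intersection of two smooth hypersurfaces meeting along an $(N-2)$-dimensional set, and $x_0$ is an interior point (within $T$) of the flat piece $\partial H\cap\overline{\Omega'}$. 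Hence there is a small ball $B=B_r(y)$ with $y\in H$, $B\subset\Omega$, and $x_0\in\partial B$, with the inner normal to $B$ at $x_0$ pointing along $-\nu(x_0)$ into $H$ — that is, $x_0$ is a regular boundary point of a ball contained in $\Omega$, approached from inside $H$ along the direction $-\nu(x_0)$.

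First I would make this geometric picture precise: because $x_0$ is in the regular part of $\partial\Omega\cap T$, there is a relatively open neighbourhood of $x_0$ in $T$ contained in $\overline{\Omega'}$, and since $\Omega'$ is open with $\mathcal{C}^2$ boundary and symmetric about $T$, this neighbourhood is actually contained in $\Omega'$ (points of $T\cap\partial\Omega'$ form the singular part). Thus a full $N$-dimensional half-ball $B^+_\rho(x_0)\cap H\subset\Omega$, and I can inscribe a ball $B_r(y)\subset\Omega$ tangent to $T$ at $x_0$ from the $H$-side. Next, on this ball I would run the standard nonlocal Hopf/barrier argument exactly as in the proof of \cite[Corollary 3.4]{FallJarohs}: using that $w>0$ in $\Omega$, that $w$ is anti-symmetric (so the "mass" of $w$ on $H$ dominates, after reflection, a positive contribution), and that $(-\Delta)^s w + c(x)w = 0$ with $c\in L^\infty$, one builds a subsolution barrier $\psi$ supported near $x_0$ with $\psi(x_0)=0$ and $(\pa_{\nu})_s\psi(x_0)>0$ in the $-\nu(x_0)$ direction, and compares $w\ge \eps\psi$ on $B_r(y)$ for small $\eps$. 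This yields $\liminf_{t\to 0^+} w(x_0 - t\nu(x_0))/t^s \ge \eps\,(\pa_\nu)_s\psi(x_0)>0$. Since $w\in\mathcal{C}^s(\R^N)$ the $s$-power is the right scaling; to get the stronger statement \eqref{1st} with the first power $t$ rather than $t^s$, I would additionally invoke the interior $\mathcal{C}^{1,\sigma}$ regularity of $w$ away from $\partial\Omega'$ (available from Theorem~\ref{thm: regularity}, since near $x_0$ on the segment $x_0-t\nu(x_0)$ we are in the interior of $\Omega'$), so that $w$ is differentiable along that segment and $w(x_0-t\nu(x_0)) = w(x_0) - t\,\partial_{-\nu}w(x_0) + o(t)$; combined with the strict positivity just obtained this forces $\liminf_{t\to0^+} w(x_0-t\nu(x_0))/t>0$, which is exactly \eqref{1st}.

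The main obstacle I anticipate is the interface between the two scalings. The barrier argument natively lives at the $\mathcal{C}^s$ scale (that is what the example $(1-|x|^2)^s_+$ and the Fall--Jarohs Hopf lemma are calibrated to), yet \eqref{1st} is phrased with a linear quotient $w/t$. The point to check carefully is that the direction $-\nu(x_0)$ is \emph{tangential} to $\partial\Omega$ at $x_0$ when viewed inside $\Omega'$ — we are moving along $T$-transverse lines but staying in the interior of $\Omega'$, where $w$ enjoys $\mathcal{C}^{1,\sigma}$ regularity and where $w$ vanishes on the $(N-1)$-dimensional set $T\cap\Omega'$. So $w$ restricted to a neighbourhood of $x_0$ in $\Omega'$ vanishes on $T\cap\Omega'$, is $\mathcal{C}^1$ there, hence its normal derivative $\partial_{-\nu}w(x_0)$ along the segment is well-defined; the nonlocal Hopf lemma applied on an inscribed ball \emph{inside} $\Omega'$ (not touching $\partial\Omega'$) then gives that this derivative is strictly positive. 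Phrased this way, \eqref{1st} is just the classical Hopf inequality for the anti-symmetric function $w$ at an interior-of-$\Omega'$ boundary point of the nodal set $T$, and the genuinely nonlocal work is confined to justifying strict positivity of that derivative via the comparison with $\eps\psi$. I would organize the write-up as: (i) the geometric lemma producing the inscribed ball $B_r(y)\subset\Omega$ with $x_0\in\partial B_r(y)\cap(\Omega'\setminus\partial\Omega')$ and inner normal $-\nu(x_0)$; (ii) application of Proposition~\ref{STRONG} (or its proof) on $B_r(y)$ — legitimately, since now $x_0\in\partial B_r(y)$ lies on the boundary of a ball compactly contained in $H$; (iii) translating the resulting $(\pa_{-\nu})_s w(x_0)<0$ together with interior $\mathcal{C}^{1,\sigma}$ regularity into the first-power statement \eqref{1st}.
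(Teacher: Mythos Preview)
There is a genuine gap in step~(ii). The inscribed ball $B_r(y)$ tangent to $T$ at $x_0$ is \emph{not} compactly contained in $H$: its closure contains $x_0\in T=\partial H$. So you are still in precisely the situation excluded by the remark after Proposition~\ref{STRONG}, and that proposition cannot be invoked. More to the point, the conclusion you hope to extract in (ii), a strict $t^s$-scale lower bound $w(x_0-t\nu)\gtrsim t^s$, is \emph{false} here. You yourself observe that $x_0$ lies in the interior of $\Omega'$, so $w\in\mathcal{C}^{1,\sigma}$ near $x_0$; hence $w(x_0-t\nu)=t\,\partial_{-\nu}w(x_0)+o(t)$, and
\[
\frac{w(x_0-t\nu)}{t^s}=t^{1-s}\bigl(\partial_{-\nu}w(x_0)+o(1)\bigr)\longrightarrow 0.
\]
So the $s$-derivative of $w$ at $x_0$ vanishes and cannot be strictly negative; any barrier comparison producing $w\gtrsim t^s$ would contradict the $\mathcal{C}^1$ regularity you rely on in step~(iii). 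The two scalings are not merely an ``interface'' issue to be bridged: the $t^s$-scale Hopf barrier is simply the wrong tool at an interior point of $\Omega'$, and step~(iii) has nothing to work with.

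What the paper does instead is build an \emph{anti-symmetric} barrier that already vanishes linearly on $T$. With $T=\{x_N=0\}$, one sets
\[
h(x)=x_N\Bigl(\varphi(x)+\alpha\bigl(d_1(x)+d_2(x)\bigr)\Bigr),
\]
where $\varphi(x)=(\rho^2-|x-x_0|^2)_+^s$ and $d_1,d_2$ are truncated distances from two small balls placed symmetrically at $(x_0',\pm4\rho)$, all compactly contained in $\Omega'$. Following the computation of Lemma~4.4 in \cite{FallJarohs} (the \emph{corner} lemma, not the Hopf lemma), one gets $(-\Delta)^s h - c\,h\le (C_1-\alpha C_2)|x_N|\le0$ in $B_\rho(x_0)\cap H$ for $\alpha$ large; since $w\ge C>0$ on the upper ball one can choose $\sigma>0$ with $w\ge\sigma h$ outside $B_\rho(x_0)\cap H$, and the anti-symmetric weak maximum principle gives $w\ge\sigma h$ everywhere. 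In particular $w(x_0',t)\ge\sigma t(\rho^2-t^2)_+^s$, which is \eqref{1st} directly at the linear scale, with no upgrading needed. The key structural point you are missing is the explicit factor $x_N$ in the barrier, which encodes the anti-symmetry and produces the correct linear rate.
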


\begin{remark}
At a first glance it could be surprising that a boundary lemma involving a fractional problem gives a result on the full outer normal derivative of the function. 
Namely, functions satisfying a fractional equation of order~$2s$
are usually not better than~$\mathcal{C}^s(\R^N)$ at the boundary, so
the first order incremental quotient in~\eqref{1st} is in general
out of control.
But in our case, if we look at the picture more carefully, we realize that since we are assuming that $w$ is an anti-symmetric solution in the whole $\Omega'$ (which contains both $\Omega$ and its reflection), any $x_0$ on the regular part of $\pa \Omega \cap T$ is actually an interior point for $w$, and hence it is natural to expect some extra regularity.
\end{remark}

\begin{proof}[Proof of Proposition~\ref{prop: new hopf}]
Without loss of generality, we can assume that 
\[
T=\{x_N=0\} \quad \text{and} \quad H=\{x_N >0\}, 
\]
so that $\Omega=\Omega' \cap \{x_N >0\}$. Let $\rho>0$ be such that $B_{\rho}(x_0) \Subset \Omega'$. If necessary replacing $\rho$ with a smaller quantity, we can suppose that $B:=B_{\rho}(x_0',4\rho)$ and $B':=B_{\rho}(x_0',-4\rho)$ are both compactly contained in $\Omega'$. Now we follow the strategy of Lemma 4.4 in \cite{FallJarohs}: for $\alpha >0$ to be determined in the sequel, we consider the barrier
\[
h(x):= x_N \left( \varphi(x) + \alpha(d_1(x) +d_2(x) ) \right),
\]
where 
\[
\varphi(x) = (\rho^2 - |x-x_0|^2)_+^s
\]
is the positive solution of 
\[
\begin{cases}
(-\Delta)^s \varphi = 1 & \text{in $B_1$} \\
\varphi = 0 & \text{in $\R^N \setminus \overline{B_1}$},
\end{cases}
\]
and 
\[
d_1(x) := (\rho-|x-(x_0', 4\rho)|)_+ \qquad d_2(x) := (\rho-|x-(x_0', -4\rho)|)_+
\]
are the truncated distance functions from the boundary of $B$ and $B'$, respectively. With this definition, we can compute $(-\Delta)^s h$ exactly as in Lemma 4.4 in \cite{FallJarohs}, proving that
\[
(-\Delta)^s h(x) - c(x) h(x) \le (C_1-\alpha C_2) |x_N| \le 0 \qquad \text{for every $x \in B_{\rho(x_0) \cap \{x_N >0\}}$},
\]
provided $\alpha>0$ is sufficiently large. By continuity and recalling that $w>0$ in $\Omega$, we deduce that $w \ge C >0$ in $B$. This permits to choose a positive constant $\sigma >0$ such that $w \ge \sigma h$ in $\overline{B_1}$, and hence $w-\sigma h \ge 0$ in $\Omega \setminus B_{\rho}(x_0)$. Therefore, the weak maximum principle (Proposition 3.1 in \cite{FallJarohs}) implies that $w \ge \sigma h$ in $\Omega$, and in particular
\[
w(x_0',t) \ge \sigma t (\rho^2 -t^2)_+^s \qquad \forall t \in [0,\rho),
\]
which gives the desired result. 
\end{proof}

As far as overdetermined problem in bounded exterior sets, namely
Theorem \ref{thm: main 2}, we shall make use of a maximum principle in domain of small measure, proved in \cite[Proposition 2.4]{JarohsWeth} in a parabolic setting. In our context, the result reads as follows.

\begin{proposition}[Jarohs, Weth \cite{JarohsWeth}]\label{SMALL}
Let $H \subset \R^N$ be a half-space and let $c_\infty>0$. There exists $\delta=\delta(N,s,c_\infty)>0$ such that if $U \subset H$ with $|U| < \delta$, and $u$ satisfies in a weak sense 
\[
\begin{cases}
-\Delta w +c(x) w \ge 0 & \text{in $U$} \\
w \ge 0 & \text{in $H \setminus U$} \\
w(x) = -w(\bar x),
\end{cases} 
\]
with $\|c\|_{L^\infty(U)} < c_\infty$, then $w \ge 0$ in $U$.
\end{proposition}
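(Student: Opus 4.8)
The statement is the stationary, elliptic counterpart of \cite[Proposition 2.4]{JarohsWeth} (which one recovers by choosing the zeroth order coefficient independent of time), and the plan is to run the classical ``maximum principle in domains of small measure'' of Berestycki--Nirenberg, adapted to the nonlocal anti-symmetric framework. Suppose, towards a contradiction, that $w^-:=\max\{-w,0\}$ is not identically zero. Since $w\ge 0$ in $H\setminus U$ and, by continuity, on $\partial U$, the restriction of $w^-$ to $H$ is supported in $\overline U$. Denote by $W$ the anti-symmetric extension of $w^-|_{H}$, that is $W=w^-$ in $H$ and $W(x)=-w^-(\bar x)$ in $\R^N\setminus H$, and by $V$ the anti-symmetric extension of $w^+|_H$, so that $w=V-W$ with $V,W\ge 0$ in $H$ and $VW\equiv 0$ in $\R^N$. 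Since $t\mapsto t^{\pm}$ is $1$-Lipschitz it does not increase the Gagliardo seminorm, and reflection across $\partial H$ is an isometry of $\mathcal{D}^s(\R^N)$; hence $V,W\in\mathcal{D}^s(\R^N)$ and $W$ is an admissible nonnegative anti-symmetric test function for $(-\Delta)^s w+c(x)w\ge 0$ in $U$.

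Testing the equation against $W$ (and extending $c$ evenly by $c(x):=c(\bar x)$ on the reflected set) one gets $\mathcal{E}(w,W)+\int_{\R^N}c\,w\,W\,dx\ge 0$; moreover, on $\mathrm{supp}\,W$ one has $w=-w^-$ in $H$ and the corresponding identity in $H^c$ by anti-symmetry, so the zeroth order term equals $-2\int_U c\,(w^-)^2\,dx\le 2c_\infty\|w^-\|_{L^2(U)}^2$. For the quadratic part I would use $w=V-W$, so that $\mathcal{E}(w,W)=\mathcal{E}(V,W)-\mathcal{E}(W,W)$; since $VW\equiv 0$,
\[
\mathcal{E}(V,W)=-c_{N,s}\int_{\R^{2N}}\frac{V(x)\,W(y)}{|x-y|^{N+2s}}\,dx\,dy
= -2c_{N,s}\int_{H}\int_{H}\Big(\frac{1}{|x-y|^{N+2s}}-\frac{1}{|x-\bar y|^{N+2s}}\Big)V(x)\,W(y)\,dx\,dy,
\]
the last equality being obtained by folding the integral onto $H\times H$ via the anti-symmetry of $V$ and $W$. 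Because $|x-\bar y|\ge|x-y|$ whenever $x,y\in H$ and $V,W\ge 0$ in $H$, the right-hand side is $\le 0$, whence $\mathcal{E}(w,W)\le-\mathcal{E}(W,W)$. Combining with the previous bound gives $\mathcal{E}(W,W)\le 2c_\infty\|w^-\|_{L^2(U)}^2$.

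On the other hand $\mathcal{E}(W,W)=\tfrac{c_{N,s}}{2}[W]^2_{\mathcal{D}^s(\R^N)}$, and the fractional Sobolev inequality followed by H\"older's inequality on $U$ give, when $N>2s$,
\[
[W]^2_{\mathcal{D}^s(\R^N)}\ \ge\ C_{N,s}\,\|W\|^2_{L^{2N/(N-2s)}(\R^N)}\ \ge\ C_{N,s}\,\|w^-\|^2_{L^{2N/(N-2s)}(U)}\ \ge\ C_{N,s}\,|U|^{-2s/N}\,\|w^-\|^2_{L^2(U)},
\]
with the obvious modification (a subcritical Sobolev embedding) in the only remaining case $N=1$, $s\ge\tfrac12$. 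Dividing by $\|w^-\|_{L^2(U)}^2>0$ forces $|U|\ge\delta$ for an explicit $\delta=\delta(N,s,c_\infty)>0$; equivalently, if $|U|<\delta$ then $w^-\equiv 0$ in $U$, i.e.\ $w\ge 0$ in $U$, which is the claim. I expect the main obstacle to be this quadratic step: ensuring $V,W\in\mathcal{D}^s(\R^N)$ so that all bilinear forms are finite, and above all verifying the sign $\mathcal{E}(V,W)\le 0$, for which the disjoint-support identity $VW\equiv 0$ and the inequality $|x-\bar y|\ge|x-y|$ on $H\times H$ are the crucial ingredients; the borderline low-dimensional case and the precise admissibility of $W$ as a test function are minor technical points handled exactly as in \cite{FallJarohs,JarohsWeth}.
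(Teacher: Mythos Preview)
The paper does not supply a proof of this proposition: it is quoted from \cite[Proposition~2.4]{JarohsWeth} (specialised to the time-independent case), so there is no ``paper's own proof'' to compare against. Your argument is the standard one for small-measure maximum principles for anti-symmetric supersolutions, exactly in the spirit of \cite{JarohsWeth} and \cite[Proposition~3.1]{FallJarohs}: decompose $w=V-W$ with $V,W$ the anti-symmetric extensions of $w^+|_H$ and $w^-|_H$, test against $W$, use $VW\equiv 0$ together with $|x-\bar y|\ge|x-y|$ on $H\times H$ to obtain $\mathcal{E}(V,W)\le 0$, and close with the fractional Sobolev inequality plus H\"older. This is correct.

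Two small remarks. First, the appeal to continuity of $w$ on $\partial U$ is unnecessary and not available in the weak setting; it suffices that $w\ge 0$ a.e.\ in $H\setminus U$ to conclude that $w^-|_H$ is supported in $\overline U$. Second, the admissibility of $W$ (or, equivalently, of $w^-\chi_H$) as a test function, and the finiteness of the bilinear forms, are genuine points that require the approximation/density arguments carried out in \cite{FallJarohs,JarohsWeth}; you correctly flag these as the places where care is needed, and indeed they are handled there. With those references in hand, your proof is complete.
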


\section{The overdetermined problem in exterior sets}\label{sec: over exterior}

In the first part of this section, we prove Theorem \ref{thm: main 1}.

We follow the same sketch used by Reichel in \cite{Reichel1}, applying the moving planes method to show that for any direction $e \in \mathbb{S}^{N-1}$ there exists $\bar \lambda=\bar \lambda(e)$ such that both the $G$ and the solution $u$ are symmetric with respect to a hyperplane
\[
T_{\lambda}:= \left\{ x \in \R^N: \langle x, e \rangle = \lambda\right\},
\]
where $\langle\cdot,\cdot \rangle$ denotes the Euclidean scalar product in $\R^N$. In the following we fix the direction $e=e_N$ and use the notation $x=(x',x_N) \in \R^{N-1} \times \R$ for points of $\R^N$. For $\lambda \in \R$, we set
\begin{equation}\label{notation}
\begin{split}
T_\lambda & := \{x \in \R^N: x_N=\lambda\}; \\
H_{\lambda} & := \{x \in \R^N: x_N>\lambda\}; \\
x^\lambda & := (x',2\lambda-x_N) \quad \text{the reflection of $x$ with respect to $T_\lambda$}; \\
A^\lambda&:= \text{the reflection of a given set $A$ with respect to $T_\lambda$}; \\
\Sigma_\lambda &:= H_\lambda \setminus \overline{G^\lambda} \quad \text{the so-called \emph{reduced half-space}}; \\
d_i &:= \inf \left\{ \lambda \in \R: T_\mu \cap \overline{G_i} = \emptyset \quad \text{for every $\mu > \lambda$}\right\}.
\end{split}
\end{equation}

\begin{figure}[ht]
\includegraphics[height=5cm]{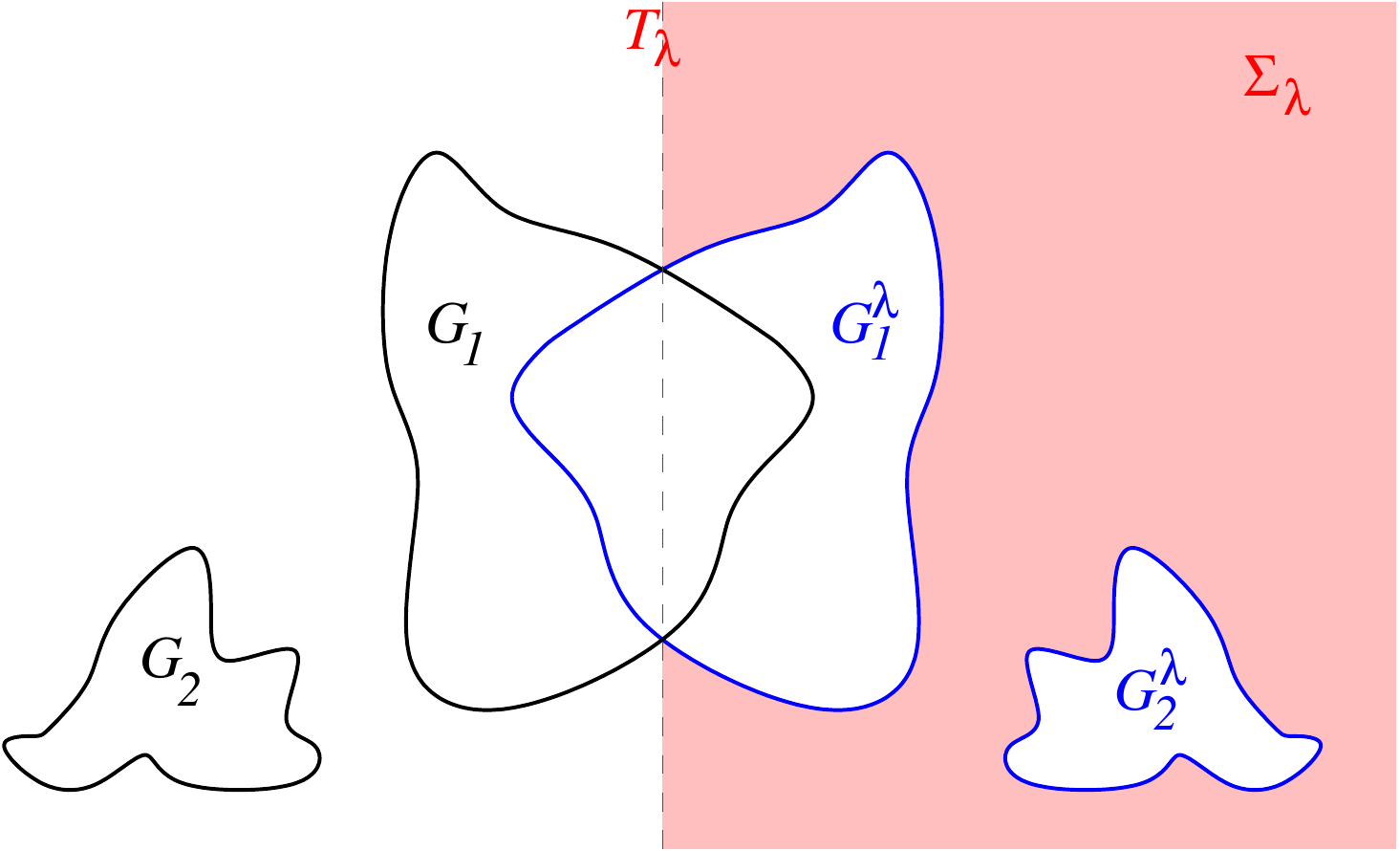} 
\caption{A picture of the reflected sets and the reduced half-space.}
\end{figure}

It is known 
that, for $\lambda$ a little smaller than $d_i$,
the reflection of $G_i \cap H_\lambda$ with respect to $T_\lambda$
lies inside $G_i$, namely
\begin{equation}\label{lies i}
(\overline{G_i} \cap H_\lambda)^\lambda \subset G_i \cap H_\lambda^\lambda \quad \text{with strict inclusion}.
\end{equation}
In addition, $\langle \nu(x) ,e_N \rangle > 0$ for every $x \in \pa G_i \cap T_\lambda$ (we recall that $\nu$ denotes the outer normal vector on $\pa G_i$, thus directed directed towards the interior of $\R^N \setminus \overline{G_i}$); this remains true for decreasing values of $\lambda$ up to a limiting position $\bar \lambda_i$ such that one of the following alternatives takes place:
\begin{itemize}
\item[($i$)] \emph{internal tangency}: the reflection $(\overline{G_i} \cap H_\lambda)^\lambda$ becomes internally tangent to $\pa G_i$;
\item[($ii$)] \emph{orthogonality condition}: $\langle \nu(x_0) ,e_N \rangle = 0$ for some $x_0 \in \pa G_i \cap T_\lambda$.
\end{itemize}

Let $\bar \lambda:= \max\{\bar \lambda_i: i=1,\dots,k\}$. For $\lambda > \bar \lambda$, it is clear that inclusion \eqref{lies i} holds for every $i$. Since $\overline{G_i} \cap \overline{G_j} = \emptyset$ for every $i \neq j$, it follows straightforwardly that
\begin{equation}\label{lies}
(\overline{G} \cap H_\lambda)^\lambda \subset G \cap H_\lambda^\lambda \quad \text{with strict inclusion}.
\end{equation}
Furthermore, $\bar \lambda$ can be characterized as
\[
\bar \lambda= \inf \left\{ \lambda \in \R \left| \begin{array}{l}
\text{$(\overline{G} \cap H_\mu)^\mu \subset G \cap H_\mu^\mu$ with strict inclusion, and} \\
\text{$\langle \nu(x),e_N \rangle >0$ for every $x \in T_\mu \cap \partial \Omega$, for every $\mu >\lambda$}
\end{array} \right.\right\}.
\]
This simple observation permits to treat the case of multi-connected interior sets $G$ essentially as if they were consisting of only one domains.

Two preliminary results regarding the geometry of the reduced half-space are contained in the following statement.

\begin{lemma}\label{lem: geom remark}
For every $\lambda \ge \bar \lambda$, the following properties hold:
\begin{itemize}
\item[($i$)] $\overline{G} \cap H_\lambda$ is convex in the $e_N$-direction;
\item[($ii$)] the set $(\R^N \setminus \overline{G}) \cap H_\lambda$ is connected.
\end{itemize}
\end{lemma}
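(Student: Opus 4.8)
The plan is to argue geometrically, exploiting the defining property of $\bar\lambda$, namely that for every $\lambda \ge \bar\lambda$ the strict inclusion \eqref{lies} holds and $\langle \nu(x),e_N\rangle > 0$ for every $x \in T_\lambda \cap \partial G$. Both statements are purely about the bounded set $G$, so the fact that we are in the exterior setting plays no direct role here; it is the reflection property of $G$ that does the work.

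\textbf{Step 1: Convexity of $\overline{G}\cap H_\lambda$ in the $e_N$-direction.} I would argue by contradiction. Suppose there is a segment parallel to $e_N$ with both endpoints in $\overline{G}\cap H_\lambda$ but containing an interior point $p$ not in $\overline{G}$. Let $\mu \ge \lambda$ be the supremum of the heights $x_N$ of points of $\overline{G}$ lying on this vertical line with $x_N$ below $p_N$; then $q=(p',\mu)$ lies on $\partial G$, the portion of the line just above $q$ (up to $p$) lies outside $\overline{G}$, and there is a point of $\overline{G}$ on the line at height strictly above $p_N \ge \mu$. Reflecting this higher point across $T_\mu$ produces a point of $(\overline{G}\cap H_\mu)^\mu$ lying on the vertical line at a height in $(\text{something} < \mu, \mu)$, i.e. just below $q$; but by \eqref{lies} (applied at level $\mu$, which is $\ge \bar\lambda$) this reflected point must lie in $G$, while the corresponding piece of the line just below $q$ — being the mirror image of the piece just above $q$, which is outside $\overline{G}$ — cannot all be inside $G$ once one is close enough to $q$. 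Making this quantitative: near $q\in\partial G$ the reflected copy $(\overline{G}\cap H_\mu)^\mu$ lies strictly inside $G$, which forces points of $G$ immediately above $q$ on the vertical line, contradicting that the segment from $q$ to $p$ avoids $\overline{G}$. (Equivalently, and perhaps more cleanly: if $\overline G\cap H_\lambda$ fails to be convex in direction $e_N$, then at the first $\mu\ge\lambda$ witnessing the failure, either $T_\mu$ is tangent to $\partial G$ from the wrong side — contradicting $\langle\nu,e_N\rangle>0$ on $T_\mu\cap\partial G$ — or the reflected cap touches $\partial G$, contradicting the strictness in \eqref{lies}.)

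\textbf{Step 2: Connectedness of $(\R^N\setminus\overline G)\cap H_\lambda$.} Fix $\lambda \ge \bar\lambda$. I would show every point of $\Sigma_\lambda:=(\R^N\setminus\overline G)\cap H_\lambda$ can be joined to a fixed far-away point by a path staying in $\Sigma_\lambda$. The natural route: first move vertically upward. By Step 1, $\overline G\cap H_\lambda$ is convex in the $e_N$-direction, so starting from any $x\in\Sigma_\lambda$ and increasing the last coordinate, once we leave $\overline G$ we never re-enter it; hence for $x_N$ large enough the vertical ray lands in $(\R^N\setminus\overline G)\cap H_\lambda$ and stays there, and we can then connect any two such "high" points by a horizontal path at a common large height above all of $\overline G$ (the complement of a bounded set in a hyperplane slice at large height is certainly connected for $N\ge 2$; for $N=1$ the statement is trivial or vacuous, and I would note this). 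The only subtlety is a point $x\in\Sigma_\lambda$ from which moving straight up would immediately hit $\overline G$: but such a point has a point of $\overline G$ directly above it at some height $\le$ its own reflection, and since $x\notin\overline G$, convexity in the $e_N$ direction of $\overline G\cap H_\lambda$ means the set of heights $t$ with $(x',t)\in\overline G$ is an interval; one slides $x$ slightly horizontally to get off the "shadow" of $G$, or alternatively observes that below every point of $\partial G\cap H_\lambda$ reachable this way the outer normal has positive $e_N$-component so $\partial G$ locally lies above, and routes around.

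\textbf{Main obstacle.} The conceptual content is easy — it is exactly the classical moving-planes geometry of Serrin/Reichel — but the one place to be careful is Step 2 when the vertical ray from $x$ meets $\overline G$: one must use that $\overline G\cap H_\lambda$ is convex in the $e_N$ direction (Step 1) to guarantee the ray exits $\overline G$ and does not oscillate, and one must handle the boundary/tangency points of $\partial G\cap T_\lambda$ correctly using $\langle\nu,e_N\rangle>0$ there. So I expect Step 2, and in particular the interface between "going up" and "sliding sideways", to be the delicate bookkeeping; Step 1 I expect to be a short contradiction argument built directly on the strict inclusion \eqref{lies} and the normal condition.
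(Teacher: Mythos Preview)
Your overall strategy matches the paper's --- use the reflection inclusion \eqref{lies} for (i), then vertical-then-horizontal paths for (ii) --- but your Step~1 argument is muddled and, as written, does not close. When you reflect ``this higher point'' (at height $h>p_N>\mu$) across $T_\mu$, the image lands at height $2\mu-h$, which need not be ``just below $q$''; it can be far below, so your comparison with ``the corresponding piece of the line just below $q$'' does not yield a contradiction. The parenthetical alternative (``first $\mu$ witnessing the failure'') is too vague to be a proof. The paper's argument is a one-line midpoint reflection: if $(x',x_N)\in\overline G\cap H_\lambda$ but $(x',t)\notin\overline G$ for some $t\in[\lambda,x_N)$, set $\lambda'=(x_N+t)/2>\bar\lambda$; then $(x',t)$ is the reflection of $(x',x_N)$ across $T_{\lambda'}$, so \eqref{lies} forces $(x',t)\in G$, a contradiction.

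Notice that this midpoint argument actually proves more than convexity in $e_N$: it shows that for every $(x',x_N)\in\overline G\cap H_\lambda$, the whole vertical segment $\{(x',t):\lambda<t<x_N\}$ lies in $\overline G$. This stronger ``downward-closedness'' is exactly what eliminates the subtlety you flag in Step~2. Indeed, if the upward ray from $x\in(\R^N\setminus\overline G)\cap H_\lambda$ met $\overline G$ at some height $s>x_N$, then by downward-closedness $(x',x_N)$ itself would be in $\overline G$, a contradiction. So the vertical ray from any exterior point stays in the exterior, and no horizontal sliding or normal-vector argument is needed; the paper then connects the two high endpoints by a horizontal segment above $\overline G$. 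Your workarounds in Step~2 are unnecessary once (i) is proved in this sharper form, and the fact that you needed them was a symptom of the weaker formulation you were aiming for in Step~1.
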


\begin{proof}
For property ($i$), we show that for any $\lambda \ge \bar \lambda$, if a point~$x=(x',x_N)$
belongs to~$\overline{G}\cap H_\lambda$, then also~$(x',t)\in \overline{G} \cap H_\lambda$
for every~$t\in [\lambda,x_N)$. If this is not true, then there exist $(x',x_N) \in G \cap H_\lambda$ and $(x',t) \not \in G$ with $t \in [\lambda,x_N)$. For $\lambda':= (x_N+t)/2 \ge \bar \lambda$, we have that $(x',t)= (x',x_N)^{\lambda'}$, but $(x',t) \not \in G$, which is in contradiction with the fact that, since $\lambda'>\bar \lambda$, the reflection of~$G$ with respect
to~$T_{\lambda'}$ does not exit~$\overline{G}$
itself. 

As far as property ($ii$) is concerned, given two points~$x^{(1)}$, $x^{(2)}\in  \Omega \cap H_\lambda$,
we fix a large~$M>0$
and consider the two vertical segments~$x^{(i)}+t e_N$,
with~$t\in[0, M-x^{(i)}_n]$. By point ($i$),
these segments lie in~$\Omega\cap H_\lambda$. Each segment connects~$x^{(i)}$
with~$y^{(i)}:= ((x^{(i)})', M)$.
Then, since~$G$ is bounded, if~$M$ is large we can connect~$
y^{(1)}$ and~$y^{(2)}$ with a horizontal segment~$ty^{(2)}+(1-t)y^{(1)}$
lying well outside~$G$. In this way, by considering the two vertical
segments and the horizontal one as a single polygonal, we have joined~$x^{(1)}$ and~$x^{(2)}$ by
a continuous path that lies in~$\Omega\cap H_\lambda$.
This shows that~$\Omega\cap H_\lambda$ is connected.
\end{proof}

We define $w_\lambda(x):= u(x^\lambda)-u(x)$. Notice that
\begin{equation}\label{ob1}
(-\Delta)^s w_\lambda + c_\lambda(x) w_\lambda=0 \qquad \text{in $\Sigma_\lambda$},
\end{equation}
where
\begin{equation}\label{def: c_lambda}
c_\lambda(x):= \begin{cases} -\displaystyle\frac{f(u_\lambda(x)) -f(u(x))}{u_\lambda(x)-u(x)} & \text{if $u_\lambda(x) \neq u(x)$} \\
0 & \text{if $u_\lambda(x) = u(x)$,} \end{cases}
\end{equation}
is in $L^\infty(\R^N)$ since $u$ is bounded and $f$ is locally Lipschitz continuous.

We aim at proving that the set
\begin{equation}\label{def: capital lambda}
\Lambda:=\left\{ \lambda > \bar \lambda: \text{$w_\mu \ge 0$ in $\Sigma_\mu$ for every $\mu \ge \lambda$} \right\}
\end{equation}
coincides with the interval $(\bar \lambda,+\infty)$, that $w_{\lambda}>0$ for every $\lambda \in \Lambda$, and that $w_{\bar \lambda} \equiv 0$ in $\Sigma_{\bar \lambda}$. From this we deduce that $u$ is symmetric with respect to $T_{\bar \lambda}$, and non-increasing in the $e_N$ direction in the half-space $H_\lambda$. Furthermore, we shall deduce that $\Omega$ (and hence also $G$) is convex in the $e_N$ direction, and symmetric with respect to $T_{\bar \lambda}$. As a product of the convexity and of the fact that $w_\lambda>0$ in $\Sigma_\lambda$ for $\lambda> \bar \lambda$, it is not difficult to deduce that $u$ is strictly decreasing in $x_N$ in $(\R^N \setminus \overline{G}) \cap H_{\bar \lambda}$. Repeating the same argument for all the directions $e \in \mathbb{S}^{N-1}$, we shall deduce the thesis.

Although the strategy of the proof is similar to
that of Theorem 1.1 in \cite{Reichel1}, its intermediate steps will differs 
substantially.

We write that the hyperplane $T_\lambda$ moves, and reaches a position $\mu$, if $w_\lambda \ge 0$ in $\Sigma_\lambda$ for every $\lambda >\mu$. With this terminology, the first step in the previous argument consists in showing that the movement of the hyperplane can start.

\begin{lemma}\label{lem: moving initial}
There exists $R>0$ sufficiently large such that $w_\lambda \ge 0$ in $\Sigma_\lambda$ for every $\lambda >R$.
\end{lemma}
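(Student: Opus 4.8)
The plan is to exploit the decay of $u$ at infinity to show that when the reflecting hyperplane $T_\lambda$ is pushed out to a sufficiently far position, the reflected values $u_\lambda(x)=u(x^\lambda)$ dominate $u(x)$ throughout the reduced half-space $\Sigma_\lambda$. Concretely, I would first observe that for $\lambda$ large we have $\overline G \subset H_\lambda^\lambda$ (that is, $G$ lies entirely on the side of $T_\lambda$ opposite to $H_\lambda$), so $G^\lambda$ is disjoint from $H_\lambda$ and $\Sigma_\lambda = H_\lambda$; in particular $\Sigma_\lambda$ does not touch $\partial G$ and $w_\lambda$ solves the linear antisymmetric equation \eqref{ob1} with $c_\lambda \in L^\infty$ in all of $H_\lambda$.

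The key point is a sign/comparison argument. For $x\in\Sigma_\lambda = H_\lambda$ with $\lambda$ very large, the reflected point $x^\lambda$ lies in the region $\{x_N<\lambda\}$, and one of two things happens: either $x^\lambda\in\overline G$, in which case $u(x^\lambda)=a$ while $u(x)<a$ (using $0\le u<a$ outside $\overline G$), so $w_\lambda(x)=a-u(x)>0$; or $x^\lambda\in\R^N\setminus\overline G$, in which case both $u(x)$ and $u(x^\lambda)$ are small — here I want to show $w_\lambda\ge 0$. Since $u(x)\to 0$ as $|x|\to+\infty$ and $x\in H_\lambda$ forces $|x|\ge\lambda\to+\infty$ uniformly in the ``far'' region, $u(x)$ is uniformly small on $H_\lambda$ for $\lambda$ large. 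The cleanest way to conclude is then to apply the maximum principle for antisymmetric functions in unbounded domains (Proposition~\ref{STRONG}): $w_\lambda$ is antisymmetric with respect to $T_\lambda$, satisfies $(-\Delta)^s w_\lambda + c_\lambda w_\lambda = 0$ in $\Sigma_\lambda \subset H_\lambda$, and on $H_\lambda\setminus\Sigma_\lambda$ — which for large $\lambda$ is empty, or at worst is where $w_\lambda = a-u >0$ — we have $w_\lambda\ge 0$; to handle the lack of a boundary condition at infinity one argues that $\liminf_{|x|\to\infty}w_\lambda(x)\ge 0$ and uses that $c_\lambda$ has a sign (or is small) in the far region, exactly as in the unbounded strong maximum principle cited from \cite{FallJarohs}. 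An equivalent route, closer to Reichel, is to note that $w_\lambda^- := \min\{w_\lambda,0\}$ is supported where $u(x^\lambda)<u(x)$, hence (for $\lambda$ large) in a region where $u$ is small, so $c_\lambda$ can be taken arbitrarily close to the values of $f'$ near $0$; using the monotonicity of $f$ near $0$, $c_\lambda\le 0$ there, and testing \eqref{ob1} against $w_\lambda^-$ gives $\mathcal E(w_\lambda^-,w_\lambda^-)\le 0$, forcing $w_\lambda^-\equiv 0$.

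I would organize the write-up as: (1) choose $R$ so large that $\overline G\subset\{x_N<R\}$ and so that $\sup_{H_R}u$ is below the threshold where $f$ is nonincreasing; (2) for $\lambda>R$, identify $\Sigma_\lambda = H_\lambda$ and verify that $w_\lambda$ is an antisymmetric solution of \eqref{ob1} with $c_\lambda\le 0$ on the support of $w_\lambda^-$; (3) apply the comparison/maximum principle to conclude $w_\lambda\ge 0$ in $\Sigma_\lambda$. The main obstacle I anticipate is the absence of a clean boundary condition for $w_\lambda$ at infinity: one must genuinely use that $u\to 0$ together with the sign of $c_\lambda$ (equivalently, the monotonicity of $f$ near $0$) to run the maximum principle on the unbounded set $\Sigma_\lambda$, rather than getting it for free. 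This is precisely why the hypothesis ``$f$ nonincreasing for small nonnegative values'' and the decay $u(x)\to 0$ both enter already at this first step.
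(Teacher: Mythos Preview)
Your strategy has the right ingredients but contains a few errors and one substantive gap. Two small corrections first. For $\lambda$ large, $G\subset\{x_N<\lambda\}$ implies $G^\lambda\subset H_\lambda$, so $\Sigma_\lambda=H_\lambda\setminus\overline{G^\lambda}$ is a \emph{proper} subset of $H_\lambda$, not all of it (your ``or at worst'' clause partly recovers this). Second, the sign of $c_\lambda$: since $f$ is nonincreasing near $0$, the difference quotient in \eqref{def: c_lambda} is $\le 0$, hence $c_\lambda\ge 0$ (not $\le 0$) where both $u(x)$ and $u(x^\lambda)$ are small; with the correct sign the term $c_\lambda w_\lambda$ is indeed $\le 0$ on $\{w_\lambda<0\}$, which is what one needs.

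The substantive gap is your appeal to Proposition~\ref{STRONG}: that is a \emph{strong} maximum principle---it assumes $w\ge 0$ in $H$ as a hypothesis and upgrades it to $w>0$ or $w\equiv 0$; it cannot be used to establish $w\ge 0$ in the first place. Your alternative route, testing \eqref{ob1} against $w_\lambda^-$, is closer to a workable weak maximum principle, but on the unbounded set $\Sigma_\lambda$ you have no a priori control on the support or integrability of $w_\lambda^-$, so the testing step and the passage from $\mathcal{E}(w_\lambda,w_\lambda^-)$ to $\mathcal{E}(w_\lambda^-,w_\lambda^-)$ (which for nonlocal antisymmetric problems is itself nontrivial) both need justification. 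This is precisely the ``obstacle at infinity'' you flag, and it is not resolved by what you wrote.

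The paper takes a different route and avoids these issues via a pointwise contradiction argument. One assumes a sequence $\lambda_k\to+\infty$ with interior minimum points $x_k\in\Sigma_{\lambda_k}$ satisfying $w_{\lambda_k}(x_k)<0$ (such minima exist because $w_{\lambda_k}=0$ on $T_{\lambda_k}$ and $w_{\lambda_k}\to 0$ at infinity). At $x_k$ one has $c_{\lambda_k}(x_k)\,w_{\lambda_k}(x_k)\le 0$ (here the monotonicity of $f$ near $0$ enters, since $|x_k|\to\infty$ forces both $u(x_k)$ and $u(x_k^{\lambda_k})$ to be small). Then a direct computation with the integral representation of $(-\Delta)^s$---splitting $\R^N$ over the set $U=\{w_{\lambda_k}<w_{\lambda_k}(x_k)\}\subset\{x_N<\lambda_k\}$ and its reflection, and using the antisymmetry together with $|x_k-y|\ge|x_k-y^{\lambda_k}|$ for $y\in U$---yields $(-\Delta)^s w_{\lambda_k}(x_k)\le 0$, with strict inequality unless $w_{\lambda_k}$ is constant. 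The equation forces both terms to vanish, hence $w_{\lambda_k}$ is constant, contradicting antisymmetry. In effect the paper is giving a self-contained pointwise proof of the very weak maximum principle you were trying to invoke; this bypasses all integrability questions at infinity.
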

\begin{proof}
We argue by contradiction, assuming that for a sequence $\lambda_k \to +\infty$ there exists $x_k \in \Sigma_{\lambda_k}$ such that $w_{\lambda_k}(x_k)<0$. Since $w_{\lambda_k}=0$ on $\pa \Sigma_{\lambda_k}=T_{\lambda_k}$, and $w_{\lambda_k} \to 0$ as $|x|$ tends to infinity, we can suppose that each $x_k$ is an interior minimum point of $w_{\lambda_k}$ in $\Sigma_{\lambda_k}$. 
Notice that
\begin{equation}\label{io}
c_{\lambda_k}(x_k) w_{\lambda_k}(x_k)\le0.\end{equation}
Indeed, on one side $w_{\lambda_k}(x_k) < 0$, on the other side since $|x_k| \to +\infty$ we have both $u_k(x_k) \to 0$ and $u_k(x_k^{\lambda_k}) \to 0$, and since $f$ is monotone non-increasing for small value of its argument, we deduce by \eqref{def: c_lambda} that $c_{\lambda_k}(x_k) \ge 0$. Now we show that
\begin{equation}\label{io2}
(-\Delta)^s w_{\lambda_k}(x_k)\le0.\end{equation}
For this, we consider the sets~$U:=\{ w_{\lambda_k} < w_{\lambda_k}(x_k)\}$
and~$V:= \{x_N<\lambda_k\}$. 
Notice that, by
the minimality property of~$x_k$ in~$\overline{H_{\lambda_k}}$,
we have that~$U\subset V$.
Therefore any integral in~$\R^N$ may be decomposed as the
sum of four integrals, namely the ones over~$U$, $V\setminus U$, $U^{\lambda_k}$
and~$H_{\lambda_k}\setminus U^{\lambda_k}$.
Using this and the fact that~$w_{\lambda_k}\geq w_{\lambda_k}(x_k)$
outside~$U$, we have that
\begin{equation}\label{io3}
\begin{split}
\int_{\R^N}\frac{w_{\lambda_k}(x_k)-w_{\lambda_k}(y)}{|x_k-y|^{N+2s}}\,dy
\le \int_{U} & \frac{w_{\lambda_k}(x_k)-w_{\lambda_k}(y)}{|x_k-y|^{N+2s}}\,dy \\
&+\int_{U^{\lambda_k}}\frac{w_{\lambda_k}(x_k)-w_{\lambda_k}(y)}{|x_k-y|^{N+2s}}\,dy.
\end{split}
\end{equation}
Also, if~$y\in U\subseteq \{x_N\le\lambda_k\}$ we have that~$|x_k-y|\ge
|x_k-y^{\lambda_k}|$, and therefore
$$ \int_{U}\frac{w_{\lambda_k}(x_k)-w_{\lambda_k}(y)}{|x_k-y|^{N+2s}}\,dy
\le \int_{U}\frac{w_{\lambda_k}(x_k)-w_{\lambda_k}(y)}{
|x_k-y^{\lambda_k}|^{N+2s}}\,dy,$$
since the numerator of the integrand is positive in~$U$.
By changing variable~$z:=y^{\lambda_k}$ in the latter integral,
we obtain
\begin{equation}\label{io4} 
\int_{U}\frac{w_{\lambda_k}(x_k)-w_{\lambda_k}(y)}{|x_k-y|^{N+2s}}\,dy
\le \int_{U^{\lambda_k}}\frac{w_{\lambda_k}(x_k)-w_{\lambda_k}(z^{\lambda_k})}{
|x_k-z|^{N+2s}}\,dz.\end{equation}
Observing that~$w_{\lambda_k}(z^{\lambda_k})=u(z)-u(z^{\lambda_k})=-w_{\lambda_k}(z)$
(and renaming the last variable of integration), we can write~\eqref{io4}
as
\begin{equation*}
\int_{U}\frac{w_{\lambda_k}(x_k)-w_{\lambda_k}(y)}{|x_k-y|^{N+2s}}\,dy
\le \int_{U^{\lambda_k}}\frac{w_{\lambda_k}(x_k)+w_{\lambda_k}(y)}{
|x_k-y|^{N+2s}}\,dy.\end{equation*}
By plugging this information into~\eqref{io3}, we conclude that
\begin{equation}\label{conclusion computation delta s}
(-\Delta)^s w_{\lambda_k}(x_k) 
\le 2\int_{U^{\lambda_k}}\frac{w_{\lambda_k}(x_k)}{
|x_k-y|^{N+2s}}\,dy\le0,
\end{equation}
with strict inequality if $U \neq \emptyset$ since $w_{\lambda_k}(x_k)<0$. This proves~\eqref{io2}.

Now we claim that
\begin{equation}\label{ob2}
(-\Delta)^s w_{\lambda_k}(x_k)=0=c_{\lambda_k}(x_k) w_{\lambda_k}(x_k).
\end{equation}
Indeed, we already know that
both the quantities above are non-positive, due to~\eqref{io}
and~\eqref{io2}. If at least one
of them were strictly negative, 
their sum would be strictly negative too, and this is in contradiction
with~\eqref{ob1}. Having established~\eqref{ob2}, we use it to observe that~$w_{\lambda_k}$
must be constant. Indeed, we firstly notice that $U = \emptyset$ (otherwise, as already observed, in~\eqref{conclusion computation delta s} we would have a strict inequality).
This means that~$w_{\lambda_k} \ge w_{\lambda_k}(x_k)$ in the whole $\R^N$, and if the set~$\{w_{\lambda_k}>w_{\lambda_k}(x_k)\}$ would have positive
measure, this would imply that~$(-\Delta)^s w_{\lambda_k}(x_k)<0$.

Thus we conclude that~$w_{\lambda_k} \equiv w_{\lambda_k}(x_k) <0$, in contradiction with its anti-symmetry with respect to $T_{\lambda_k}$.
\end{proof}

Thanks to the above statement, the value $\mu:= \inf \Lambda$ is a real number. We aim at proving that the hyperplane $T_\lambda$ reaches the position $\bar \lambda$, i.e. $\mu = \bar \lambda$. In this perspective, a crucial intermediate result is the following.

\begin{lemma}\label{lem: condizione per simmetria}
Let $\lambda \ge \bar \lambda$. If $w_\lambda(x) = 0$ for some $x \in \Sigma_{\lambda}$, then $G$ is symmetric with respect to $T_\lambda$. In particular, if $w_\lambda \ge 0$ in $\Sigma_\lambda$ and $\lambda>\bar \lambda$, then $w_\lambda>0$ in $\Sigma_\lambda$.
\end{lemma}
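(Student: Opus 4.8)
The plan is to deduce the result from the strong maximum principle for anti-symmetric functions (Proposition~\ref{STRONG}), applied to $w_\lambda$ on the reduced half-space $\Sigma_\lambda\subset H_\lambda$. Since Proposition~\ref{STRONG} requires $w_\lambda\ge 0$ on the whole half-space $H_\lambda$ while the equation~\eqref{ob1} only holds on $\Sigma_\lambda$, the first thing I would do is check that, on the complementary piece $\overline{G^\lambda}\cap H_\lambda$, nonnegativity of $w_\lambda$ is automatic: if $x\in\overline{G^\lambda}\cap H_\lambda$ then $x^\lambda\in\overline{G}$, so $u(x^\lambda)=a$, whence $w_\lambda(x)=a-u(x)\ge 0$ because $u\le a$ everywhere. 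Granting that $w_\lambda\ge 0$ in $\Sigma_\lambda$ — which is explicitly assumed in the second assertion, and is the only case relevant to the first within the moving-planes scheme — I then obtain $w_\lambda\ge 0$ throughout $H_\lambda=\Sigma_\lambda\cup(\overline{G^\lambda}\cap H_\lambda)$. Together with~\eqref{ob1}, the bound $c_\lambda\in L^\infty$, and the anti-symmetry of $w_\lambda$ with respect to $T_\lambda=\partial H_\lambda$, this puts me exactly in the setting of Proposition~\ref{STRONG}.

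Next I would invoke Proposition~\ref{STRONG} with its domain equal to $\Sigma_\lambda$ and its half-space equal to $H_\lambda$, obtaining the alternative that either $w_\lambda>0$ everywhere in $\Sigma_\lambda$ or $w_\lambda\equiv 0$ in $H_\lambda$. Under the hypothesis $w_\lambda(x)=0$ for some $x\in\Sigma_\lambda$, the first option is impossible, so $w_\lambda\equiv 0$ in $H_\lambda$ and hence, by anti-symmetry, $w_\lambda\equiv 0$ in $\R^N$; that is, $u(x^\lambda)=u(x)$ for every $x$, so $u$ is symmetric about $T_\lambda$. To conclude that $G$ is symmetric, I would use that $\overline{G}=\{u=a\}$ — which follows from $u\equiv a$ on $\overline{G}$ and $0\le u<a$ on $\R^N\setminus\overline{G}$ — so that this level set is invariant under reflection across $T_\lambda$; since $G$ is open and of class $\mathcal{C}^2$ it equals the interior of its closure, and therefore $G^\lambda=G$.

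For the ``in particular'' part I would argue by contradiction: if $\lambda>\bar\lambda$, $w_\lambda\ge 0$ in $\Sigma_\lambda$, but $w_\lambda$ is not strictly positive there, then (being nonnegative) $w_\lambda$ vanishes at some point of $\Sigma_\lambda$, and the first part gives $G^\lambda=G$; but then $(\overline{G}\cap H_\lambda)^\lambda=\overline{G^\lambda}\cap H_\lambda^\lambda=\overline{G}\cap H_\lambda^\lambda\supseteq G\cap H_\lambda^\lambda$, which contradicts the \emph{strict} inclusion~\eqref{lies} that holds for $\lambda>\bar\lambda$. Thus $w_\lambda>0$ in $\Sigma_\lambda$. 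The step I expect to be the main obstacle is the very first one: verifying that the sign information on $w_\lambda$ passes from $\Sigma_\lambda$ into the reflected cavity $\overline{G^\lambda}\cap H_\lambda$, so that Proposition~\ref{STRONG} is applicable on all of $H_\lambda$ — this is the point at which the Dirichlet condition $u\equiv a$ on $\overline{G}$ and the strict bound $u<a$ outside really enter; the remaining manipulations are formal and use only facts already recorded in the excerpt (that~\eqref{ob1} holds in $\Sigma_\lambda$ whenever $\lambda\ge\bar\lambda$, and that $c_\lambda\in L^\infty(\R^N)$).
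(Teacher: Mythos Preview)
Your proposal is correct and follows essentially the same approach as the paper: apply Proposition~\ref{STRONG} to obtain the dichotomy $w_\lambda>0$ in $\Sigma_\lambda$ or $w_\lambda\equiv 0$ in $H_\lambda$, then use $u=a$ on $\overline{G}$ together with $u<a$ on $\R^N\setminus\overline{G}$ to force the symmetry of $G$. The only cosmetic difference is that you deduce $G^\lambda=G$ via the identification $\overline{G}=\{u=a\}$, whereas the paper argues by contradiction, exhibiting a point of $\big((\overline{G}\cap H_\lambda^\lambda)\setminus(\overline{G}\cap H_\lambda)^\lambda\big)^\lambda$ at which $w_\lambda$ would be strictly positive; your explicit check that $w_\lambda\ge 0$ on $\overline{G^\lambda}\cap H_\lambda$ (needed for Proposition~\ref{STRONG}) is a point the paper leaves implicit.
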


\begin{proof}
By the strong maximum principle, Proposition \ref{STRONG}, we have that if $w_\lambda(x) = 0$ for a point $x \in \Sigma_\lambda$, then $w_\lambda \equiv 0$ in $H_\lambda$. Let us assume by contradiction that $G$ is not symmetric with respect to $T_\lambda$. At first, it is easy to check that 
\[
G \cap H_\lambda^\lambda = (G \cap H_\lambda)^\lambda \quad \Longrightarrow \quad G= G^\lambda.
\]
Hence, having assumed that $G$ is not symmetric we have $G \cap H_\lambda^\lambda \neq (G \cap H_\lambda)^\lambda$; since for $\lambda > \bar \lambda$ the inclusion \eqref{lies} holds, this implies that $\overline{G} \cap H_\lambda^\lambda \supset (\overline{G} \cap H_\lambda)^\lambda$ with strict inclusion. Let
\[
E:=  (\overline{G} \cap H_\lambda^\lambda ) \setminus (\overline{G} \cap H_\lambda)^\lambda \neq \emptyset.
\]
\begin{figure}[ht]
\includegraphics[height=5cm]{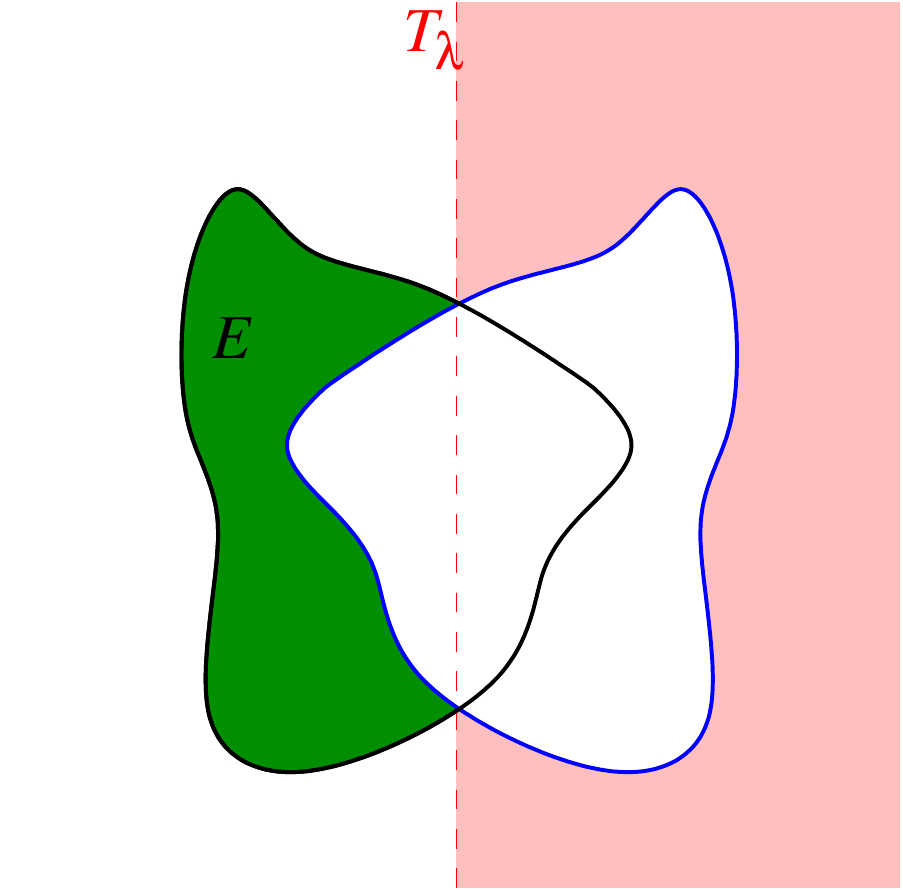} \hspace{1 cm} \includegraphics[height=5cm]{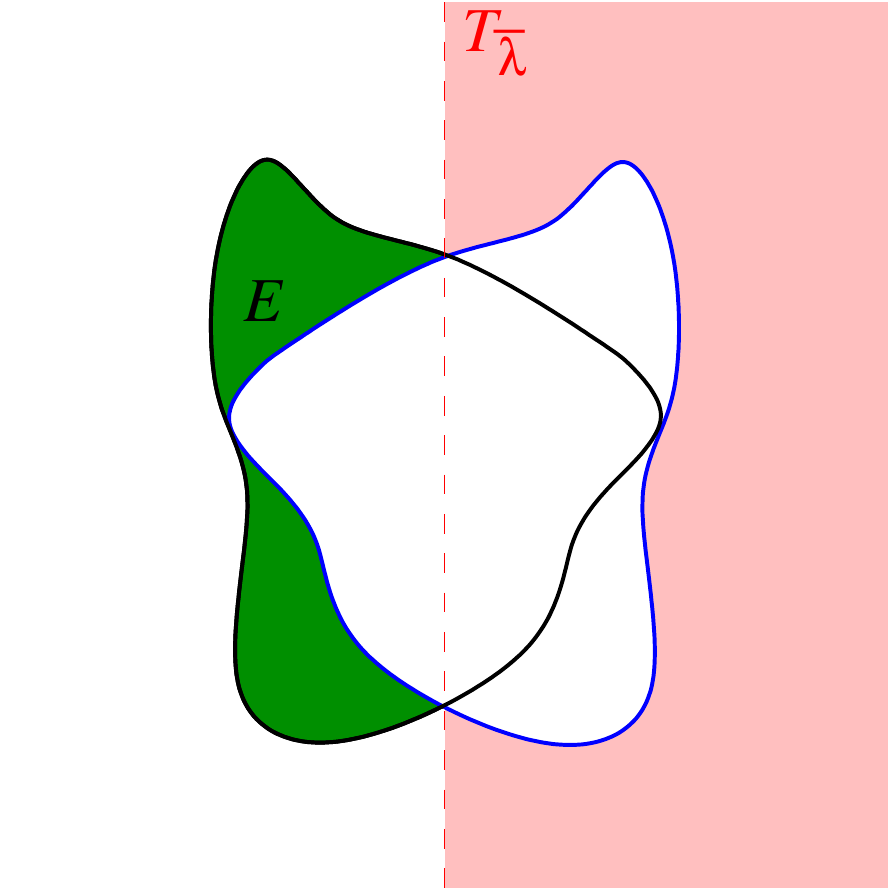}
\caption{On the left, it is represented the situation for $\lambda> \bar \lambda$; on the right, the one for $\lambda=\bar \lambda$ with $G$ not symmetric with respect to $T_{\bar \lambda}$.}
\end{figure}

\noindent For every $x \in E^\lambda \subset H_\lambda$, we have 
\begin{align*}
x^\lambda \in E \subset \overline{G} \quad \Longrightarrow \quad u(x^\lambda)=a, \\
x \in \R^N \setminus \overline{G} \quad \Longrightarrow \quad u(x)<a,
\end{align*}
and hence $w_\lambda(x) >0$, a contradiction. 

So far we showed that if $\lambda \ge \bar \lambda$ and $w_\lambda$ vanishes somewhere in $\Sigma_\lambda$, then $G$ is symmetric with respect to $T_\lambda$. But, by definition of $\bar \lambda$, this cannot be the case for $\lambda>\bar \lambda$, and hence for any such $\lambda$ if we can prove that $w_\lambda \ge 0$ in $\Sigma_\lambda$, we immediately deduce that $w_\lambda>0$ there. 
\end{proof}

\begin{remark}
In the proof we used in a crucial way both the nonlocal boundary conditions, and the nonlocal strong maximum principle. In particular, we point out that if $w_\lambda(x) = 0$ at one point $x \in \Sigma_\lambda$, then $w_\lambda \equiv 0$ in the whole half-space $H_\lambda$.
\end{remark}

In the following we shall make use the fact that from the sign of the $s$-derivative of a function $u$ in a given direction we can infer information about the monotonicity of $u$ itself. 

\begin{lemma}\label{lem: monotonia}
 Let~$\Omega$ be an open subset of~$\R^N$
with~${\mathcal{C}}^2$ boundary, $x_0 \in \pa \Omega$, and let $\nu(x_0)$ denote the inner unit normal vector to $\pa \Omega$ in $x_0$.
Let $w \in \mathcal{C}^s(\R^N) \cap \mathcal{C}^1(\Omega)$
be such that~$w=0$ in~$\R^N\setminus\Omega$.
Let~$d(x)$ to be the distance of~$x\in\overline\Omega$
to the boundary of~$\Omega$ and
$$ W(x):=\frac{w(x)}{\big( d(x)\big)^s }.$$
Assume that
\begin{equation}\label{W buona}
W\in {\mathcal{C}}^{0,1}(\overline\Omega)\quad \text{and} \quad W(x_0) <0.
\end{equation}
If $\eta \in \mathbb{S}^{N-1}$ is such that $\eta \cdot \nu(x_0)  >0$, then $w$ is monotone decreasing in the direction $\eta$ in a neighborhood of $x_0$. 
\end{lemma}

\begin{remark}
Clearly, the condition $w=0$ in $\R^N \setminus \Omega$ can be replaced by $w=const.$ in $\R^N \setminus \Omega$, without changing the thesis.
\end{remark}

\begin{proof}
Let $x \in \Omega$ and $t>0$. We denote by $P_G$ the projection at minimal distance from $\Omega$ to $\pa \Omega$, which is well defined and continuous close to $x_0$ since $\Omega$ is of class $\mathcal{C}^2$. The gradient of the distance function $d$ can be expressed as
\[
\nabla d(x+t\eta) =  \frac{x+t\eta - P_G(x+t\eta)}{|x+t\eta - P_G(x+t\eta)|} = \nu(P_G(x+t\eta)),
\]
see \cite[Theorem 4.8]{Fed}. Using the continuity of $\nu(\cdot)$ on $\pa \Omega$, we have then the following expansion for the distance $d(x+t\eta)$:
\begin{equation}\label{2111}
\begin{split}
d(x+t \eta)-d(x) &=
t \nabla d(x+\tau  t \eta)\cdot \eta =
t \, \nu(P_G(x+t \tau \eta)) \cdot \eta  \\
& = t \, \nu(x_0) \cdot \eta + o(t) 
 \ge \frac{\nu(x_0) \cdot \eta}{2} t > 0,
 \end{split}
\end{equation}
where $\tau \in [0,1]$ and $|x-x_0| + |t|$ is small enough. Furthermore, in view of~\eqref{W buona}, we know that there exist $C, c_*>0$ such that
\begin{equation}\label{2112}
\big|W(x+t\eta)-W(x)\big| \le C t \quad \text{and} \quad W(x+t\eta) \le -c_*
\end{equation}
if~$|x-x_0|+|t|$ is small enough. Therefore, there exists $\delta>0$ such that if $|x-x_0|<\delta$ and $0<t<\delta/2$, both \eqref{2111} and \eqref{2112} holds true. 

We fix now $x \in \Omega$ with $|x-x_0|<\delta/2$, and for any $t$ small we observe that, by \eqref{2111}, for some $\tau \in [0,1]$
\begin{equation}\label{RE4193}
\begin{split}
 \big( d(x+t\eta)\big)^s-&\big( d(x)\big)^s \ge 
\left( d(x)+t\, \frac{
\nu(x_0)\cdot \eta}{2} \right)^s- \big( d(x)\big)^s\\
&= s \frac{ \frac12 (\nu(x_0) \cdot \eta)t }{\left( d(x) + \frac12(\nu(x_0) \cdot \eta) \tau t \right)^{1-s}}   \ge s \frac{ \frac12 (\nu(x_0) \cdot \eta)t }{\left( d(x) + \frac12(\nu(x_0) \cdot \eta) t \right)^{1-s}}.
\end{split}
\end{equation}
%
Now \eqref{2112} and \eqref{RE4193} implies that, for any~$x\in\Omega$ with $|x-x_0|<\delta/2$ and for~$t\to0^+$,
\begin{align*}
w(x+t\eta) &-w(x) = W(x+t\eta)\big( d(x+t\eta)\big)^s-W(x)\big( d(x)\big)^s\\ 
& = W(x+t\eta)\Big[
\big( d(x+t\eta)\big)^s-\big( d(x)\big)^s\Big]
+\big( d(x)\big)^s\Big[W(x+t\eta)-W(x)\Big]\\
&\le - s \, c_\star \frac{ \frac12 (\nu(x_0) \cdot \eta)t }{\left( d(x) + \frac12(\nu(x_0) \cdot \eta) t \right)^{1-s}}  + C t \big(d(x)\big)^s.
\end{align*}
So, dividing by~$t>0$ and taking the limit, we find that for every $x \in \Omega$ with $|x_0-x_0|<\delta$
\[
\partial_\eta w(x) =
\lim_{t\to0^+}\frac{ w(x+t\eta)-w(x) }{t}\le - s \, c_\star \frac{\frac12(\nu(x_0) \cdot \eta)}{\big( d(x) \big)^{1-s}} + C \big(d(x)\big)^s.
\]
If necessary replacing $\delta$ with a smaller quantity, we see that the right hand side is strictly negative, and this gives the desired claim.
\end{proof}

We are finally ready to prove that the hyperplane $T_\lambda$ reaches the critical position $\bar \lambda$.

\begin{lemma}\label{lem: moving continuation}
There holds $\inf \Lambda=\bar \lambda$.
\end{lemma}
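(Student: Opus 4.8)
The strategy is the classical moving‑planes continuation argument: I will argue by contradiction, assuming $\mu := \inf\Lambda > \bar\lambda$, and derive that the plane can in fact be pushed slightly past $\mu$, contradicting the definition of $\mu$. First I note that by continuity $w_\mu \ge 0$ in $\Sigma_\mu$, so Lemma~\ref{lem: condizione per simmetria} applies: since $\mu > \bar\lambda$, $G$ is not symmetric with respect to $T_\mu$, hence $w_\mu > 0$ in $\Sigma_\mu$ and, by the Hopf part of Proposition~\ref{STRONG}, $(\pa_\eta)_s w_\mu < 0$ at every boundary point of $\Sigma_\mu$ lying on $\pa G^\mu$ but off $T_\mu$ — in particular (using the overdetermined condition $(\pa_\nu)_s u = \alpha_i < 0$, which forces the reflected data to match only if there were symmetry) one gets a strict monotonicity of $u$ near the relevant portions of $\pa G$. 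The points on $\pa\Sigma_\mu \cap T_\mu$ are handled by the enhanced boundary lemma, Proposition~\ref{prop: new hopf}, which gives a genuine first‑order decay $w_\mu(x_0',t)\gtrsim t$ there; this is exactly what replaces the $\mathcal C^2$ estimates used in the local case.

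Next I would run the usual two‑region compactness argument to show $w_{\mu-\eps} \ge 0$ in $\Sigma_{\mu-\eps}$ for small $\eps>0$. Fix a large ball $B_R$ containing $\overline G$ and all reflections under consideration. On the compact "inner" part $\Sigma_\mu \cap \overline{B_R}$, strict positivity of $w_\mu$ together with the boundary lemmas (Propositions~\ref{STRONG} and~\ref{prop: new hopf}) gives, by a standard continuity/compactness argument, that $w_{\mu-\eps}$ stays $\ge 0$ on this set for $\eps$ small — one covers $\pa\Sigma_\mu$ by finitely many balls on which either $w_\mu$ is bounded below by a positive constant or the Hopf‑type lower bound applies, and notes that the inclusion $(\overline G \cap H_{\mu-\eps})^{\mu-\eps} \subset G \cap H_{\mu-\eps}^{\mu-\eps}$ persists for $\eps$ small (as $\mu>\bar\lambda$). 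On the "outer" part $\Sigma_{\mu-\eps}\setminus B_R$: if $w_{\mu-\eps}$ were negative somewhere there, it would attain a negative interior minimum at some point $x_\eps$ with $|x_\eps|\ge R$ (it vanishes on $T_{\mu-\eps}$ and at infinity), and exactly the computation in Lemma~\ref{lem: moving initial} — decomposing $(-\Delta)^s w_{\mu-\eps}(x_\eps)$ over $U$, $U^{\mu-\eps}$ and using anti‑symmetry — yields $(-\Delta)^s w_{\mu-\eps}(x_\eps)\le 0$, while $c_{\mu-\eps}(x_\eps)w_{\mu-\eps}(x_\eps)\le 0$ because $u,u_{\mu-\eps}\to 0$ at infinity and $f$ is non‑increasing near $0$; combined with \eqref{ob1} this forces $w_{\mu-\eps}$ to be constant, contradicting anti‑symmetry. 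Choosing $R$ large enough once and for all (as in Lemma~\ref{lem: moving initial}) makes this outer estimate uniform in $\eps$.

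Putting the two regions together gives $w_{\mu-\eps}\ge 0$ in all of $\Sigma_{\mu-\eps}$, i.e. $\mu-\eps \in \Lambda$, contradicting $\mu=\inf\Lambda$. Hence $\mu=\bar\lambda$.

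**Expected main obstacle.** The delicate point is the "inner" estimate near the singular part of $\pa\Sigma_\mu\cap T_\mu$ and near $\pa G^\mu \cap T_\mu$, where the classical proof differentiates $u$ twice but here $u$ is only $\mathcal C^s$ up to $\pa G$. This is precisely where Proposition~\ref{prop: new hopf} (full first‑order decay at regular boundary points on $T_\mu$) and the hypothesis $\alpha_i<0$ enter, and one must check carefully that the finitely many local lower bounds for $w_\mu$ can be combined into a bound that survives the perturbation $\mu\mapsto\mu-\eps$; the geometric facts from Lemma~\ref{lem: geom remark} (convexity in $e_N$, connectedness of $(\R^N\setminus\overline G)\cap H_\lambda$) are what guarantee the covering argument closes up without leaving gaps. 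The outer region, by contrast, is essentially a repeat of Lemma~\ref{lem: moving initial} and should be routine.
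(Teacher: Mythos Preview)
Your overall strategy is sound and you have correctly identified the key ingredients (Propositions~\ref{STRONG} and~\ref{prop: new hopf}, the hypothesis $\alpha_i<0$, and the outer-region argument from Lemma~\ref{lem: moving initial}). The paper, however, organizes the argument differently, and in a way that bypasses the vague ``covering / a~priori lower bound'' step you propose for the inner region.

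Rather than trying to establish $w_{\mu-\eps}\ge 0$ directly via local lower bounds for $w_\mu$ that ``survive the perturbation $\mu\mapsto\mu-\eps$'', the paper extracts a contradiction sequence: since $\mu=\inf\Lambda$, there exist $\lambda_k\nearrow\mu$ and interior minima $x_k\in\Sigma_{\lambda_k}$ with $w_{\lambda_k}(x_k)<0$. If $|x_k|\to\infty$ one contradicts exactly as in Lemma~\ref{lem: moving initial} (this is your ``outer'' part, and indeed routine). Otherwise $x_k\to\bar x$; by $\mathcal C^{s'}$--convergence $w_\mu(\bar x)=0$, and Lemma~\ref{lem: condizione per simmetria} forces $\bar x\in\partial\Sigma_\mu$. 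Then the paper performs a clean three-case analysis on the location of $\bar x$:
\begin{itemize}
\item[(1)] $\bar x$ on the regular part of $T_\mu$: interior $\mathcal C^{1,\sigma}$ regularity gives $\mathcal C^1$--convergence of $w_{\lambda_k}\to w_\mu$ near $\bar x$, so the interior-minimum condition $\nabla w_{\lambda_k}(x_k)=0$ passes to the limit as $\nabla w_\mu(\bar x)=0$, contradicting Proposition~\ref{prop: new hopf};
\item[(2)] $\bar x\in\partial(G^\mu\cap H_\mu)\setminus T_\mu$: here one simply uses $u(\bar x)<a$ (\emph{not} Hopf) to get $w_\mu(\bar x)=a-u(\bar x)>0$, a direct contradiction;
\item[(3)] $\bar x\in\partial G^\mu\cap T_\mu=\partial G\cap T_\mu$: since $\mu>\bar\lambda$ one has $\langle\nu(\bar x),e_N\rangle>0$, so $\alpha_i<0$ together with Lemma~\ref{lem: monotonia} gives that $u$ is strictly decreasing in $e_N$ in a one-sided neighbourhood of $\bar x$; as both $x_k$ and $x_k^{\lambda_k}$ fall into that neighbourhood for large $k$, one gets $w_{\lambda_k}(x_k)>0$, contradicting the choice of $x_k$.
\end{itemize}

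Compared with your plan: the covering approach is viable in principle, but the passage from ``Hopf-type lower bound for $w_\mu$'' to ``$w_{\mu-\eps}\ge 0$ nearby'' is exactly where things are delicate (different function, shifted domain, strip between $T_{\mu-\eps}$ and $T_\mu$), and carrying it out rigorously drives one back to a contradiction-by-sequence argument --- which is what the paper does directly, with the added bonus that the minimum condition $\nabla w_{\lambda_k}(x_k)=0$ is immediately available for case~(1). Note also a minor confusion in your sketch: at points of $\partial G^\mu$ off $T_\mu$ one does \emph{not} invoke Proposition~\ref{STRONG}, because $w_\mu$ is already strictly positive there by the assumption $u<a$; the Hopf lemma is used only in the concluding step of the overall proof (ruling out $w_{\bar\lambda}>0$), not in this continuation lemma.
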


\begin{proof}
By contradiction, we suppose that $\mu=\inf \Lambda> \bar \lambda$. By continuity, we have $w_\mu \ge 0$ in $\Sigma_\mu$, and hence Lemma \ref{lem: condizione per simmetria} yields $w_\lambda>0$ in $\Sigma_\lambda$. Since $\mu=\inf\Lambda$, there exist sequences $\bar \lambda < \lambda_k < \mu$, $\lambda_k \to \mu$, and $x_k \in \Sigma_{\lambda_k}$, such that $w_{\lambda_k}(x_k) <0$. Since $w_{\lambda_k} \ge 0$ on $\partial \Sigma_{\lambda_k}$ and $w_{\lambda_k} \to 0$ as $|x| \to +\infty$, it is not restrictive to assume that $x_k$ are interior minimum point of $w_{\lambda_k}$ in $\Sigma_{\lambda_k}$. If $|x_k| \to +\infty$, we obtain a contradiction as in Lemma \ref{lem: moving initial}. If $\{x_k\}$ is bounded, there exists $\bar x \in \Sigma_{\mu}$ such that, up to a subsequence, $x_k \to  \bar x$. Concerning the pre-compactness of the sequence $\{w_{\lambda_k}\}$, we recall that by definition
\[
w_{\lambda_k}(x) = u(x',2\lambda_k-x_N)-u(x',x_N).
\]
The function $u$ is of class $\mathcal{C}^s(\R^N)$, and hence for every compact $K \Subset \R^N$ there exists $C>0$ such that 
\[
\|w_{\lambda_k}\|_{\mathcal{C}^s\left(K\right)} \le C.
\]
Therefore, the sequence $w_{\lambda_k}$ is convergent to $w_{\mu}$ in $\mathcal{C}^{s'}_{\loc}(\R^N)$ for any $0<s'<s$. The uniform convergence entails $w_\mu(\bar x) = 0$, and by Lemma \ref{lem: condizione per simmetria} this implies that $\bar x \in \partial \Sigma_{\mu}$ is a boundary point. To continue the proof, we have to distinguish among three different possibilities, and in each of them we have to find a contradiction.

\emph{Case 1) $\bar x$ lies on the regular part of $\partial \Sigma_{\mu} \cap T_{\mu}$.} We note that 
\[
\bar x \in \interior\left(\overline{\Sigma_{\lambda_k} \cup \Sigma_{\lambda_k}^{\lambda_k}}\right)
\]
for every $k$ sufficiently large, where $\interior$ denotes the interior of a set. Since, by interior regularity (see Subsection \ref{sub: regularity}), we know that $u \in \mathcal{C}^{1,\sigma}(\R^N \setminus \overline{G})$ for some $\sigma \in (0,1)$, by definition $\{w_k\}$ is uniformly bounded in $\mathcal{C}^{1,\sigma}(\overline{B_\rho(\bar x)})$. In particular, since by interior minimality (and interior regularity) we have $\nabla w_{\lambda_k}(x_k) = 0$ and $w_{\lambda_k} \to w_{\mu}$ in $\mathcal{C}^1(\overline{B_\rho(\bar x)})$, we deduce that $\nabla w_{\mu}(\bar x)=0$. This is in contradiction with Proposition \ref{prop: new hopf}, where we have showed that 
\[
-\liminf_{t \to 0^+} \frac{w_{\mu}(\bar x', \bar x_N +t)}{t} < 0.
\]

\emph{Case 2) $\bar x \in \partial (G^\mu \cap H_{\mu}) \setminus T_\mu$.} Since $\mu > \bar \lambda$, we know that $\pa (G^\mu \cap H_{\mu}) \cap \overline{G} = \emptyset$ (otherwise we have to be in a critical position of internal tangency). Having assumed that $0 \le u <a$ in $\Omega$, we deduce that $u(\bar x) <a$, and hence 
\[
w_{\mu}(\bar x) = a- u(\bar x) >0,
\]
in contradiction with the fact that by convergence $w_{\mu}(\bar x) = 0$.
 
\emph{Case 3) $\bar x \in \pa G^{\mu} \cap T_\mu$.} We observe that $\bar x \in \partial G$, and since $\mu>\bar \lambda$, the outer (with respect to $G$) unit normal vector $\nu(\bar x)$ is such that $\langle \nu(\bar x), e_N \rangle > 0$. By assumption \eqref{W buona intro}, the function $W := u/d^s$ is Lipschitz continuous in $\overline{\Omega}$, and, by the Hopf lemma for the fractional Laplacian, we have that $u(x)-a \le -C (d(x))^s$ close to $\pa G$. Therefore, by Lemma \ref{lem: monotonia},
we see that $\pa_{e_N} u < 0$ in $B_{\rho}(\bar x) \cap \R^N \setminus G$ for some positive $\rho$. Let $y_k$ denote the reflection of $x_k$ with respect to $T_{\lambda_k}$. Since both $x_k,y_k \to \bar x$, at least for $k$ sufficiently large the whole segment connecting $x_k$ with $y_k$ is contained in $B_{\rho}(\bar x) \cap \overline{\R^N \setminus \overline{G}}$. Recalling that $\pa_{e_N} u < 0$, this implies that $u$ is monotone decreasing along the segment $[y_k,x_k]$, that is
\[
w_{\lambda_k}(x_k) = u(y_k)-u(x_k) > 0,
\]
in contradiction with the fact that $w_{\lambda_k}(x_k) <0$.
\end{proof}

\begin{proof}[Conclusion of the proof of Theorem \ref{thm: main 1}]
For every $\lambda \ge \bar \lambda$, we have $w_\lambda \ge 0$ in $\Sigma_\lambda$. If $\lambda>\bar \lambda$, then the strict inequality holds, and it remains to show that $w_{\bar \lambda} \equiv 0$ in $\Sigma_{\bar \lambda}$. To this aim, we argue by contradiction assuming that $w_{\bar \lambda} >0 $ in $\Sigma_{\bar \lambda}$, and we distinguish two cases. The following argument is adapted by \cite{FallJarohs}.

\emph{Case 1) $\bar \lambda$ is a critical value of internal tangency.} There exists $x_0 \in \partial G \cap \partial( (G \cap H_\lambda)^\lambda) \setminus T_\lambda$. Clearly we have $w_{\bar \lambda}(x_0)=0$, so that $x_0^\lambda \in \partial G \cap H_\lambda$, and by Proposition \ref{STRONG} we deduce that $(\pa_{\nu})_s w_{\bar \lambda}(x_0) <0$, where $\nu$ denotes the outer unit normal vector to $\pa G$ in $x_0$. 

We observe that it cannot be $x_0 \in \partial G_i \cap H_{\bar \lambda}^{\bar \lambda}$ and $x_0 \in (\pa G_j \cap H_{\bar \lambda})^{\bar \lambda}$ with $i \neq j$. This follows from the definition of $\bar \lambda$ and the fact that $\overline{G_i} \cap \overline{G_j} = \emptyset$, see \cite[Lemma 2.1]{Sirakov} for a detailed proof. Hence, having assumed that $(\pa_\nu)_s u = \alpha_i$ on $\pa G_i$, and observing that by internal tangency $\nu(x_0)= - \nu(x_0^\lambda)$, we find also 
\[
(\pa_{\nu(x_0)})_s w_{\bar \lambda}(x_0) =  (\pa_{\nu(x_0^\lambda)})_s u(x_0^{\bar \lambda}) - (\pa_{\nu(x_0)})_s u(x_0) = 0,
\] 
which is a contradiction.

\emph{Case 2) $\bar \lambda$ is a critical value where the orthogonality condition is satisfied.} Let $x_0 \in \partial G_i$, and let $\delta=\delta_{G_i}$ denote the distance function from the boundary of $G_i$. Up to rigid motions, it is possible to suppose that $x_0=0$, $\nu(x_0)$ is a vector of the orthonormal basis, say $\nu(x_0)=e_1$, and $\nabla^2 \delta(0)$ is a diagonal matrix; to ensure that the function $\delta$ is twice differentiable, we used the $\mathcal{C}^2$ regularity of $G$. Let $\eta:=(1,0,\dots,0,1)$. Adapting step by step the proof of Lemma 4.3 in \cite{FallJarohs}, it is possible to deduce that $w_{\bar \lambda}(t \eta) = o(t^{1+s})$ as $t \to 0^+$. In this step we need to recall that $u/\delta^s \in \mathcal{C}^{0,\gamma}(\R^N \setminus G)$, see Subsection \ref{sub: regularity}. On the contrary, thanks to the nonlocal version of the Serrin's corner lemma (Lemma 4.4 in \cite{FallJarohs}; the result is stated therein in a bounded domain, but the reader can check that this is not use in the proof) we also infer that $w_{\bar \lambda}(t \eta)  \ge C t^{1+s}$ for $t$ positive and small, for some constant $C>0$. This gives a contradiction. 

We proved that $w_{\bar \lambda} \equiv 0$. By Lemma \ref{lem: condizione per simmetria}, this implies that $G$, and hence $\R^N \setminus \overline{G}$, are symmetric with respect to the hyperplane $T_{\bar \lambda}$. In principle both $\R^N \setminus \overline{G}$ and $G$ could have several connected components. But, as proved in Lemma \ref{lem: geom remark}, $(\R^N \setminus \overline{G}) \cap H_{\bar \lambda}$ is connected, which implies by symmetry that $(\R^N \setminus \overline{G}) \cap H_{\bar \lambda}^{\bar \lambda}$ is in turn connected. Therefore, if $\R^N \setminus \overline{G}$ is not connected, necessarily $G$ contains a neighborhood of the hyperplane $T_{\bar \lambda}$, which is not possible since $G$ is bounded.

As far as the connectedness of $G$ is concerned, we firstly observe that by property ($i$) of Lemma \ref{lem: geom remark} and by symmetry, $G$ is convex in the $e_N$-direction. Let us assume by contradiction that there exists at least two connected components $G_1$ and $G_2$ of $G$. It is not possible that $G_1$ and $G_2$ meet at boundary points, since we assumed that $G$ is of class $\mathcal{C}^2$. 
Since $G$ is convex, there exists a hyperplane $T'$ not parallel to $T_{\bar \lambda}$ separating $G_1$ and $G_2$. Let $e$ be an orthogonal direction to $T'$. Defining
\begin{align*}
T_\lambda'&:= \{x \in \R^N: \langle x,e\rangle > \lambda\} \\
d'&:= \inf\left\{ \lambda \in \R: T_\mu' \cap \overline{G} = \emptyset \text{ for every $\mu >\lambda$} \right\},
\end{align*}
without loss of generality we can suppose that $\overline{G_1} \cap T_{d'}'\neq \emptyset$, while $\overline{G_2} \cap T_{d'}' =\emptyset$. In the same way as we defined $\bar \lambda$ for the direction $e_N$, we can now define $\bar \lambda'$ for $e$, and prove that $G$ is symmetric with respect to $T'_{\bar \lambda'}$. But this gives clearly a contradiction, since by definition $G_2 \cap \{\langle x,e \rangle \ge \bar \lambda'\} = \emptyset$, while $G_2 \cap \{\langle x,e \rangle < \bar \lambda'\} = G_2$. 
\end{proof}

\subsection{Proof of Theorem \ref{thm: main 1 prime}}

The proof of Theorem \ref{thm: main 1 prime} follows a different sketch, being based upon the following known result (we refer to the appendix in \cite{FallJarohs} for a detailed proof).

\begin{proposition}\label{prop: criterion}
Let $u : \R^N \to \R$ be continuous and such that $u$ has a limit when $|x| \to +\infty$. Then the following statements are equivalent:
\begin{itemize}
\item[($i$)] $u$ is radially symmetric and radially non-increasing with respect to a point of $\R^N$;
\item[($ii$)] for every half-space $H$ of $\R^N$ we have that either $u(x) \ge u(R_H(x))$ in $H$, or $u(x) \le u(R_H(x))$ in $H$, where $R_H$ denotes the reflection with respect to the boundary $\partial H$.
\end{itemize}
\end{proposition}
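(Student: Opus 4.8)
I would first dispose of the implication $(i)\Rightarrow(ii)$, which is an elementary distance comparison. Write a half-space as $H=\{\langle x,e\rangle>\lambda\}$ for a unit vector $e$ and $\lambda\in\R$, and suppose $u(x)=g(|x-p|)$ with $g$ non-increasing. Decomposing every point into its component along $e$ and its component orthogonal to $e$, a one-line computation gives, for $x\in H$, that $|x-p|\le|R_H(x)-p|$ when $\langle p,e\rangle\ge\lambda$ and $|x-p|\ge|R_H(x)-p|$ when $\langle p,e\rangle\le\lambda$; since $g$ is non-increasing, this is exactly $u\ge u\circ R_H$ on $H$ in the first case and $u\le u\circ R_H$ on $H$ in the second.

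For $(ii)\Rightarrow(i)$ the plan is a ``soft'' moving planes scheme, run one direction at a time. Fix a unit vector $e$; for $\lambda\in\R$ set $H_\lambda:=\{\langle x,e\rangle>\lambda\}$ and let $R_\lambda$ be the reflection across $\partial H_\lambda$. Define
\[
A_e:=\{\lambda:\ u\le u\circ R_\lambda\ \text{on}\ H_\lambda\},\qquad B_e:=\{\lambda:\ u\ge u\circ R_\lambda\ \text{on}\ H_\lambda\}.
\]
By $(ii)$ one has $A_e\cup B_e=\R$, and continuity of $u$ makes both sets closed (if $\lambda_n\to\lambda$ with $\lambda_n\in A_e$, then each fixed $x\in H_\lambda$ lies in $H_{\lambda_n}$ for $n$ large and one passes to the limit in $u(x)\le u(R_{\lambda_n}x)$). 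The crucial point is that both $A_e$ and $B_e$ are non-empty, and this is the only step in which the hypothesis that $u$ has a limit $u_\infty$ at infinity is really used: for $\lambda$ large the reflection $R_\lambda$ maps suitable far-away points of $H_\lambda$, where $u$ is as close as one likes to $u_\infty$, onto points near a prescribed location where $u$ is close to its supremum or to its infimum, and comparing these values rules out membership of $\lambda$ in $B_e$ and in $A_e$ respectively for $\lambda\gg0$, and the opposite ones for $-\lambda\gg0$. (This also forces $u_\infty\in\{\inf u,\sup u\}$; if $u\equiv u_\infty$ then $(i)$ is trivial, so assume $u$ non-constant.) Since $\R$ is connected and $A_e,B_e$ are closed with union $\R$, they must intersect; any $\bar\lambda(e)\in A_e\cap B_e$ gives $u=u\circ R_{\bar\lambda(e)}$, i.e.\ $u$ is symmetric about a hyperplane $\Pi_e$ orthogonal to $e$.

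It remains to assemble these one-directional symmetries. For non-constant $u$, the hyperplane $\Pi_e$ orthogonal to $e$ is unique: two distinct parallel symmetry hyperplanes would make $u$ invariant under a non-trivial translation, hence — having a limit at infinity — constant. Writing $\Pi_e=\{\langle x,e\rangle=c(e)\}$, uniqueness together with $\Pi_{-e}=\Pi_e$ gives $c(-e)=-c(e)$. After a translation we may assume the coordinate hyperplanes $\{x_i=0\}$ are among the $\Pi_{e_i}$; then conjugating the reflection across $\Pi_e$ by the reflection $\tau_i$ across $\{x_i=0\}$ is again a reflection of $u$ about a hyperplane orthogonal to $\tau_i e$, which by uniqueness equals $\Pi_{\tau_i e}$, forcing $c(\tau_i e)=c(e)$ for each $i$. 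Iterating all $N$ sign flips, $c(e)=c(\tau_1\cdots\tau_N e)=c(-e)=-c(e)$, so $c\equiv 0$: every $\Pi_e$ passes through the origin, hence $u$ is symmetric about every hyperplane through the origin, equivalently $u(x)=g(|x|)$ for some $g$. Finally $g$ is monotone — a non-monotone radial profile would violate $(ii)$ on a half-space straddling a local extremum — and the direction of monotonicity is fixed by which of $A_e$, $B_e$ was selected above, namely non-increasing in the normalisation of the statement, where $u_\infty=\inf u$.

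I expect the main obstacle to be the non-emptiness of $A_e$ and $B_e$: it is the only place where the behaviour of $u$ at infinity is genuinely exploited, and it requires a careful choice of far-away points together with their reflections. Everything else — closedness, connectedness of $\R$, uniqueness of $\Pi_e$ and the sign-flip argument for its passing through the origin, and the monotonicity of $g$ — is comparatively routine.
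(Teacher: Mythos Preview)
The paper does not give its own proof of this proposition; it records it as a known result and refers to the appendix of \cite{FallJarohs}. Your argument is correct and self-contained, so there is nothing to compare against in this paper.

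Two minor remarks. First, your monotonicity step (``a non-monotone radial profile would violate $(ii)$ on a half-space straddling a local extremum'') is a little terse; the tidiest way to close it is to observe that, after you have translated so that the unique symmetry hyperplane in each direction passes through the origin, you know $A_e\cap B_e=\{0\}$, both sets are closed, and $A_e\cup B_e=\R$; connectedness of $(0,\infty)$ then forces $(0,\infty)\subset A_e$ or $(0,\infty)\subset B_e$, and in the former case taking $\lambda=(r_1+r_2)/2$ and $x=r_2 e$ for any $0<r_1<r_2$ gives $g(r_2)\le g(r_1)$ directly. Second, you have correctly flagged that $(ii)$ alone only yields ``radially monotone'', with ``non-increasing'' corresponding to the case $u_\infty=\inf u$; this caveat is harmless for the applications in the paper, where $u\ge 0$ and $u\to 0$ at infinity.
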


Hence, to prove the radial symmetry of $u$ we aim at showing that condition ($ii$) in Proposition~\ref{prop: criterion}
is satisfied. 

\medskip

Let us consider at first all the half-space $H$ such that $\partial H$ is orthogonal to the $e_N$ direction. Using the notation introduced at the beginning of this section, and recalling the definition \eqref{def: c_lambda} of $c_\lambda$, we see that $c_\lambda \ge 0$ in $\R^N$ for every $\lambda$. Thus, it is not difficult to adapt the proof of Lemma \ref{lem: moving initial}, using the fact that for $\lambda \ge \bar \lambda$ we have $w_\lambda \ge 0$ in $H_\lambda \setminus \Sigma_\lambda$, to deduce that 
\begin{equation}\label{GE0}
\text{for every $\lambda \ge \bar \lambda$, it results that $w_\lambda \ge 0$ in $\Sigma_{\lambda}$.}
\end{equation}

On the contrary, we point out that now we cannot immediately conclude that $w_\lambda>0$ in $\Sigma_\lambda$ for $\lambda>\bar \lambda$, since in the proof of Lemma \ref{lem: condizione per simmetria} we have used both the assumptions $\alpha<0$ and $u<a$ in $\R^N \setminus \overline{G}$. Nevertheless, we can prove that 
\begin{equation}\label{SIM}
\text{$w_{\bar \lambda} \equiv 0$ in $H_{\bar \lambda}$},
\end{equation}
arguing exactly as in the conclusion of the proof of Theorem \ref{thm: main 1}.

\medskip

This line of reasoning can be used for all the direction $e \in \mathbb{S}^{N-1}$. To be more explicit, we introduce the following notation: for a direction $e$ and a real number $\lambda$, we set
\begin{align*}
T_{e,\lambda} & :=\{\langle x,e \rangle= \lambda\} \\
H_{e,\lambda} & := \{ \langle x,e \rangle > \lambda \} \\
R_{e,\lambda} & := \text{reflection with respect to the hyperplane $T_{e,\lambda}$}\\
\Sigma_{e,\lambda} & := (\R^N \setminus \overline{R_{e,\lambda}(G)}) \cap H_{e,\lambda}  \\
\bar \lambda(e) &:= \text{critical position for the direction $e$} \\
x^{e,\lambda} &:= x + (2\lambda- \langle x,e \rangle) e = R_{e,\lambda}(x) \\
w_{e,\lambda}(x)& := u(x^{\lambda,e})- u(x). \\
\end{align*}
As in \eqref{GE0} and \eqref{SIM}, for every $e \in \mathbb{S}^{N-1}$ there hold
\[
\text{for every $\lambda \ge \bar \lambda(e)$, it results that $w_{e,\lambda} \ge 0$ in $\Sigma_{e,\lambda}$,}
\]

and 
\[
\text{$w_{e,\bar \lambda(e)} \equiv 0$ in $H_{e,\bar \lambda(e)}$}.
\]
We show that this implies that condition ($ii$) in Proposition \ref{prop: criterion}. The following lemma has been implicitly used in the proof of Theorem 5.1 in \cite{FallJarohs}; here we prefer to include a detailed proof for the sake of completeness. 

\begin{lemma}\label{090}
Let $e \in \mathbb{S}^{N-1}$, and let us assume that for $\lambda > \bar \lambda(e)$ it results $w_{e,\lambda} \ge 0$ in $H_{e,\lambda}$, and $w_{e,\bar \lambda(e)} \equiv 0$ in $H_{e,\bar \lambda(e)}$. Then 
\[
\text{either $w_{\mu} \ge 0$ in $H_{e,\mu}$, or $w_\mu \le 0$ in $H_{e,\mu}$, for every $\mu \in \R$}.
\]
\end{lemma}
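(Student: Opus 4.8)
The plan is to exploit the fact that once the moving-plane procedure has been carried out \emph{from every direction}, one already knows a complete family of hyperplanes of symmetry, and then to argue that this forces monotonicity of $u$ in the $e$-direction on \emph{one} side of $T_{e,\bar\lambda(e)}$, from which condition ($ii$) of Proposition~\ref{prop: criterion} follows directly. More precisely, fix $e\in\mathbb{S}^{N-1}$. For $\mu>\bar\lambda(e)$ we are given $w_{e,\mu}\ge 0$ in $H_{e,\mu}$, i.e. $u(x^{e,\mu})\ge u(x)$ whenever $\langle x,e\rangle>\mu$; this says precisely that reflecting across $T_{e,\mu}$ does not decrease $u$ on the far side. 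The first step is to deduce from this family of reflection inequalities that $u$ is monotone non-increasing in the direction $e$ on the half-space $H_{e,\bar\lambda(e)}$: given $\langle x,e\rangle > \langle y,e\rangle \ge \bar\lambda(e)$ with $x-y$ parallel to $e$, choose $\mu=\tfrac12(\langle x,e\rangle+\langle y,e\rangle)>\bar\lambda(e)$, so that $y=x^{e,\mu}$ and $\langle x,e\rangle>\mu$, whence $u(y)=u(x^{e,\mu})\ge u(x)$. Thus $u$ is non-increasing along $e$ on $\{\langle x,e\rangle\ge\bar\lambda(e)\}$.

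The second step is the key reflection-comparison: I claim that for any $\mu\in\R$, either $w_{e,\mu}\ge 0$ in $H_{e,\mu}$ or $w_{e,\mu}\le 0$ in $H_{e,\mu}$. If $\mu\ge\bar\lambda(e)$ this is already known (with sign $\ge 0$), so assume $\mu<\bar\lambda(e)$. The idea is to use the symmetry $w_{e,\bar\lambda(e)}\equiv 0$, i.e. $u=u\circ R_{e,\bar\lambda(e)}$, to transfer the question to the other side. For a point $x$ with $\langle x,e\rangle>\mu$, compare $u(x)$ and $u(x^{e,\mu})$ by composing reflections: $R_{e,\mu}$ followed by $R_{e,\bar\lambda(e)}$ is a translation by $2(\bar\lambda(e)-\mu)e$ in the direction $e$. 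Using $u\circ R_{e,\bar\lambda(e)}=u$, one reduces the comparison of $u(x)$ with $u(x^{e,\mu})$ to a comparison of $u$ at two points whose $e$-coordinates are both $\ge\bar\lambda(e)$ (after possibly reflecting one of them back across $T_{e,\bar\lambda(e)}$ when it lands on the wrong side), and then Step~1's monotonicity gives a \emph{definite} sign. One has to check the bookkeeping: depending on whether $x$ and $x^{e,\mu}$ lie above or below $T_{e,\bar\lambda(e)}$ there are a couple of cases, but in each the inequality comes out with the same sign (namely $w_{e,\mu}\le 0$ in $H_{e,\mu}$ when $\mu<\bar\lambda(e)$, which is the mirror image of the $\mu>\bar\lambda(e)$ case as it should be).

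Finally, an arbitrary half-space $H$ of $\R^N$ has $\partial H = T_{e,\mu}$ for some $e\in\mathbb{S}^{N-1}$ and $\mu\in\R$ (after orienting $e$ so that $H=H_{e,\mu}$), and $R_H=R_{e,\mu}$, so $u(x)-u(R_H(x))=-w_{e,\mu}(x)$; by Step~2 this has a constant sign on $H$, which is exactly condition ($ii$). Applying Proposition~\ref{prop: criterion} yields that $u$ is radially symmetric and radially non-increasing about some point, completing the argument. I expect the main obstacle to be Step~2, and specifically the careful case analysis needed to keep the sign consistent: the composition-of-reflections trick is clean in the generic case, but one must verify that when the translated point crosses $T_{e,\bar\lambda(e)}$ the extra reflection does not flip the inequality — this is where the non-increasing monotonicity on the \emph{whole} closed half-space $\{\langle x,e\rangle\ge\bar\lambda(e)\}$ (not just its interior) is used, together with continuity of $u$.
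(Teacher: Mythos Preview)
Your approach is correct and rests on the same core idea as the paper---use the symmetry $u\circ R_{e,\bar\lambda(e)}=u$ to transfer the case $\mu<\bar\lambda(e)$ to the already-established case---but the execution differs. You first extract monotonicity of $u$ in the $e$-direction on $\{\langle x,e\rangle\ge\bar\lambda(e)\}$ (Step~1), and then combine it with the symmetry and a case analysis (Step~2). The paper's proof is more direct and avoids your ``main obstacle'' entirely: setting $\mu':=2\bar\lambda(e)-\mu>\bar\lambda(e)$, one verifies the algebraic identity
\[
w_{e,\mu}(x)=w_{e,\mu'}\bigl(R_{e,\bar\lambda(e)}(x)\bigr)\qquad\text{for all }x\in\R^N,
\]
using only $u\circ R_{e,\bar\lambda(e)}=u$. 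Since $x\in H_{e,\mu}$ forces $\langle R_{e,\bar\lambda(e)}(x),e\rangle=2\bar\lambda(e)-\langle x,e\rangle<\mu'$, and since $w_{e,\mu'}\ge 0$ on $H_{e,\mu'}$ together with antisymmetry gives $w_{e,\mu'}\le 0$ on the complement, one concludes $w_{e,\mu}\le 0$ on $H_{e,\mu}$ in a single stroke, with no case distinction. Your route is sound (and the monotonicity lemma is independently useful in moving-planes arguments), but the paper's identity sidesteps precisely the bookkeeping you anticipated as delicate. Note also that your final paragraph, invoking Proposition~\ref{prop: criterion}, goes beyond the lemma's statement; it belongs to the subsequent argument in the paper rather than to this lemma itself.
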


Notice that in principle $\bar \lambda(e) \neq -\bar \lambda(-e)$, and hence the result is not immediate.

\begin{proof}
Only to fix our minds, we consider $e=e_N$, and for the sake of simplicity we omit the dependence on $e_N$ in the notation previously introduced. For $\lambda \ge \bar \lambda$ there is nothing to prove. Let $\lambda < \bar \lambda$. We fix $\mu= 2\bar \lambda - \lambda$, so that $\bar \lambda$ is the medium point between $\lambda$ and $\mu$. In this way we have
\begin{align*}
w_\lambda(x',x_N) &=  u(x',2\lambda-x_N)-u(x',x_N) = u\left(x',2(2\bar \lambda-\mu)-x_N\right) - u(x',x_N) \\
 &=  u\left(x',2 \bar \lambda-( 2\mu + x_N - 2\bar \lambda) \right)  -u(x',x_N) \\
 &= u(x',2\mu+x_N-2 \bar \lambda) - u(x',x_N) \\
& = u\left(x',2\mu-(2\bar \lambda-x_N) \right) -u(x',2\bar \lambda -x_N) \\
& = w_\mu(x',2\bar \lambda-x_N), 
\end{align*}
where we used the fact that $u(x^{\bar \lambda}) = u(x)$ for every $x \in \R^N$. Now it is sufficient to observe that if $x \in H_\lambda$, then 
\[
2\bar \lambda -x_N < 2 \bar \lambda-\lambda =\mu,
\]
that is, $(x',2\bar \lambda-x_N) \in \R^N \setminus \overline{H_\mu}$. Therefore, using the fact that $w_\mu \ge 0$ in $H_\mu$ and is anti-symmetric, we conclude that $w_\lambda \le 0$ in $H_\lambda$ for every $\lambda<\bar \lambda$.
\end{proof}
The result in Lemma~\ref{090} means that for every $e \in \mathbb{S}^{N-1}$ and $\lambda \in \R$ we have that either $u(x) \ge u(x^{e,\lambda})$ in $H_{e,\lambda}$, or else $u(x) \le u(x^{e,\lambda})$ in $H_{e,\lambda}$. Since the $H_{e,\lambda}$ are all the possible half-spaces of $\R^N$, by Proposition \ref{prop: criterion} we infer that $u$ is radially symmetric and radially non-increasing with respect to some point of $\R^N$. This still does not prove that $G$ is radially symmetric, but it is sufficient to ensure that $\{u <a\}$ is the complement of a ball $B$ or a certain radius $\rho$. Up to a translation, it is not restrictive to assume that the centre of $B$ is the origin. To complete the proof of Theorem \ref{thm: main 1 prime}, we have to show that $B=G$. 

\medskip

Before, we point out that since $u$ is radial and non-constant, if $w_{e,\lambda} \equiv 0$ in $H_{e,\lambda}$, then necessarily $\lambda=\bar \lambda(e)= 0$. Indeed, if this were not true, then there exists $x \in R_{e,\lambda}(\overline{B}) \setminus \overline{B} \neq \emptyset$, and for any such $x$ we have 
\[
0 = w_{e,\lambda }(x) = u( x^{e,\lambda})-u(x) = a- u(x) >0,
\]
a contradiction. Therefore, the strong maximum principle together with \eqref{GE0} imply that $w_{e,\lambda}>0$ in $\Sigma_{e,\lambda}$ for every $e \in \mathbb{S}^{N-1}$ and $\lambda>0$, which in particular proves the radial strict monotonicity of $u$ outside $\overline{B}$.

\medskip

Now we show that $B=G$. We argue by contradiction, noting that there are two possibilities: either $\overline{B \setminus \overline{G}} \cap \partial \{u<a\} \neq \emptyset$, or the intersection is empty, which means that
$G$ is an annular region 
surrounding $(\R^N \setminus \overline{G}) \setminus \{u<a\}$.

\begin{figure}[ht]
\includegraphics[height=4.5cm]{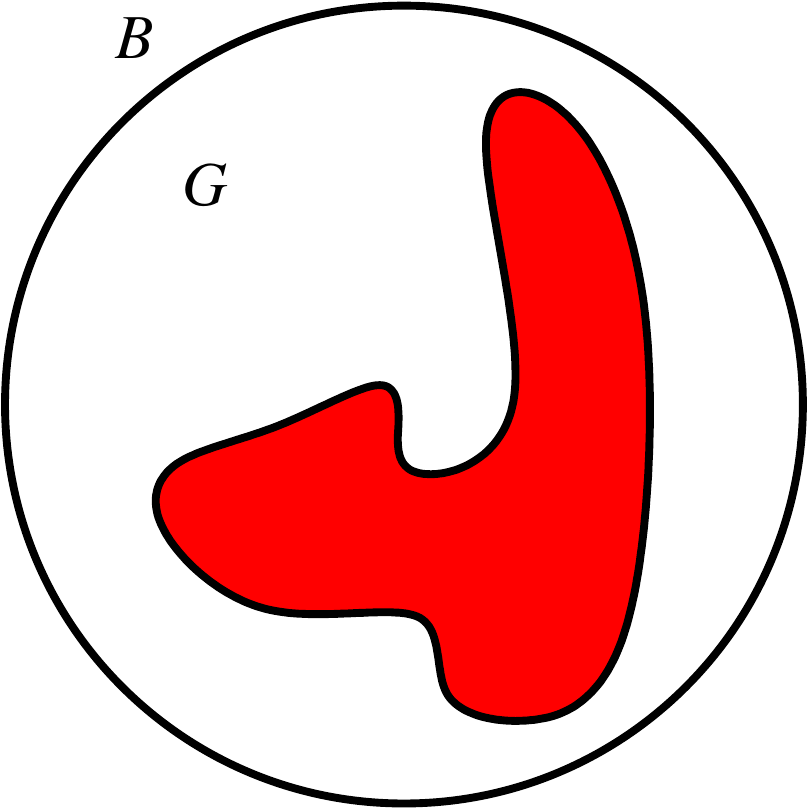}  \hspace{1 cm} \includegraphics[height=5cm]{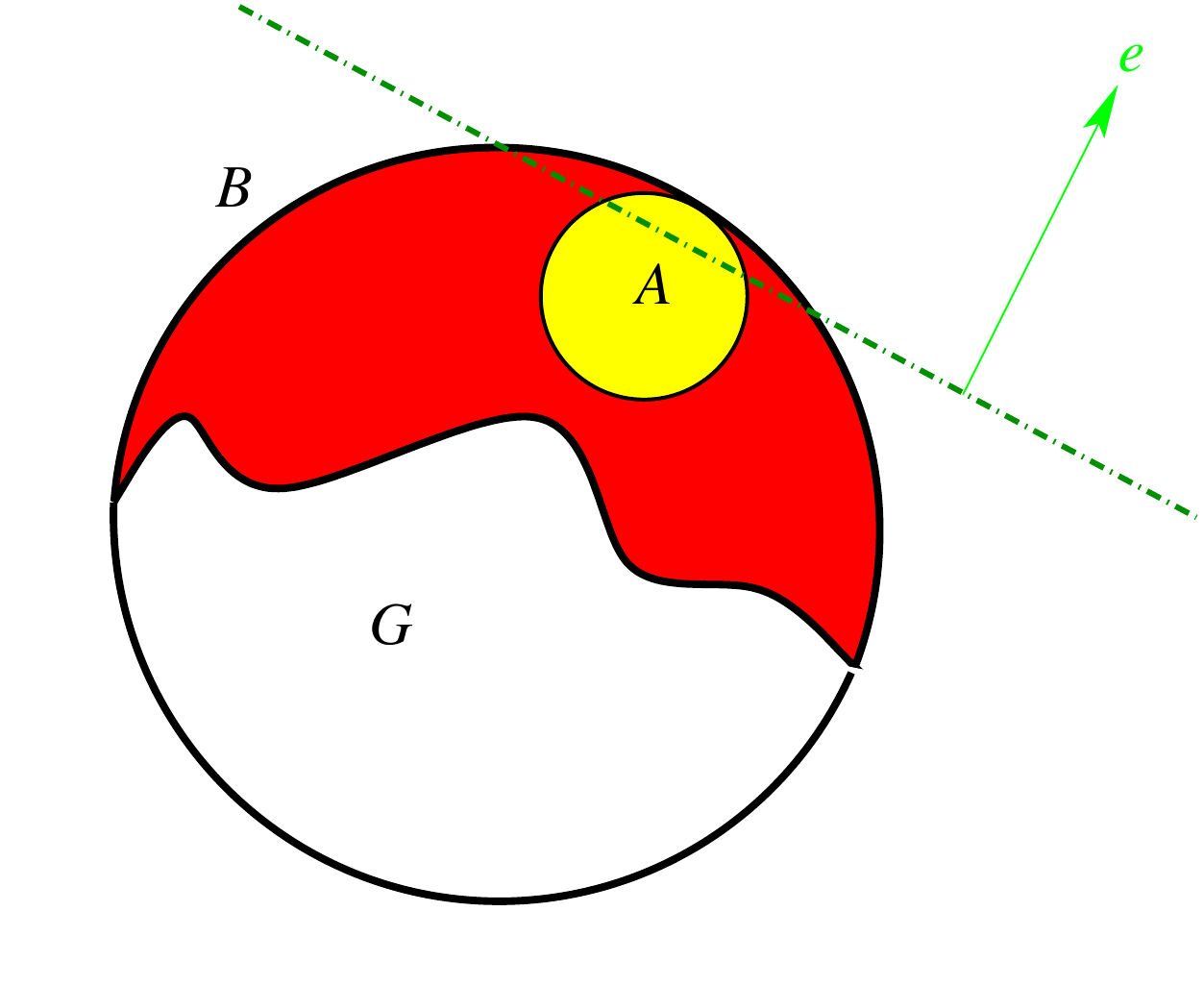} 
\caption{On the left, the case $\overline{B \setminus \overline{G}} \cap \partial \{u<a\} \neq \emptyset$; on the right, $\overline{B \setminus \overline{G}} \cap \partial \{u<a\} \neq \emptyset$.}
\end{figure}


If the latter alternative takes place, then there exists $e \in \mathbb{S}^{N-1}$ and $\lambda>0$ such that $\overline{G} \cap H_{e,\lambda}$ is not convex. This is in contradiction with Lemma \ref{lem: geom remark}.


It remains to show that also $
\overline{B \setminus \overline{G}} \cap \partial \{u<a\} \neq \emptyset$ cannot occur. To this aim, we observe that in such a situation there exists a direction $e \in \mathbb{S}^{N-1}$, a small interval $[\lambda_1,\lambda_2]$ with $\lambda_1>0$, and a small ball $A \subset B \setminus \overline{G}$, such that $\overline{A} \cap T_{e,\lambda} \neq \emptyset$ while $\overline{G} \cap T_{e,\lambda} \neq \emptyset$ for every $\lambda \in [\lambda_1,\lambda_2]$. Let $(\lambda_1+\lambda_2)/2 < \lambda < \lambda_2$. Then the reflection $R_{e,\lambda}(A \cap H_{e,\lambda}) \subset A$, and hence therein  we have $w_{e,\lambda}(x) = 0$. But on the other hand, since $A \cap H_{e,\lambda} \subset \Sigma_\lambda$, we have also that by the strong maximum principle $w_{e,\lambda} >0$ in $\Sigma_{e,\lambda}$, a contradiction (recall that the only $\mu$ such that $w_{e,\mu} \equiv 0$ in $H_{e,\mu}$ is $\mu=0$).

In this way, we have shown that $\overline{G} = \{u=a\}$ is a ball, and the function $u$ is radial with respect to the centre of $G$ and radially decreasing in $\R^N \setminus \overline{G}$, which is the desired result.

\subsection{Proof of Corollary \ref{corol: main 1}}

To show that $0 \le u \le a$ and $f(a) \le 0$ imply $u <a$ in $\R^N$, we use a comparison argument, as in the local case (see \cite[Corollary 1]{Reichel1}). Let us set
\begin{equation}\label{def c in corol}
c(x) := \begin{cases} -\displaystyle\frac{f(u(x))-f(a)}{u(x)-a} & \text{if $u(x) <a$} \\
0 & \text{if $u(x) = a$}.
\end{cases}
\end{equation}
Then, recalling that $u \le a$, we have
\begin{align*}
(-\Delta)^s (u-a) + c^+(x) (u-a) &\le   (-\Delta)^s (u-a) + c(x)(u-a) \\
& = (-\Delta)^s u - (  f(u)-f(a) )  \le 0 
\end{align*}
in $\R^N \setminus \overline{G}$. We claim that this implies $u<a$ in $\R^N \setminus \overline{G}$. Indeed, if this were not true that there would exists a point $\bar x \in \R^N \setminus \overline{G}$ for $u$ with $u(\bar x) = a$. But then $(-\Delta)^s u(\bar x) \le 0$, in contradiction with the fact that
\[
(-\Delta)^s u (\bar x) = \int_{\R^N} \frac{u(\bar x) -u(y)}{|x-y|^{N+2s}}\,dy > 0;
\]
here the strict inequality holds since $u \le a$ in $\R^N$, and $u<a$ in a set of positive measure by the boundary condition $u(x) \to 0$ as $|x| \to +\infty$. 

Now it remains to show that $\alpha_i<0$ for every $i$. This is a direct consequence of the Hopf's lemma for non-negative supersolutions proved in \cite{FallJarohs}, Proposition 3.3 plus Remark 3.5 therein. Indeed, we have already checked that for $c \in L^\infty(\R^N)$ defined in \eqref{def c in corol}, it results that 
\[
\begin{cases}
(-\Delta)^s (a-u) + c(x)(a-u) \ge 0 & \text{in $\R^N \setminus \overline{G}$} \\
a-u \ge 0 & \text{in $\R^N$}.
\end{cases}
\]
This implies $(\pa_\nu)_s (a-u) >0$ on $\pa G$, that is $(\pa_\nu)_s u<0$ on $\pa G$, where $\nu$ denotes the unit normal vector to $\pa G$ directed inwards $\R^N \setminus \overline{G}$.

\subsection{Proof of Corollary \ref{corol: subharmonicity} }

If there exists $x \in \R^N \setminus \overline{G}$ such that $u(x) > a$, then by the boundary conditions $u$ has an interior maximum point $ \bar x \in \R^N \setminus \overline{G}$. Therefore
\[
(-\Delta)^s u(\bar x) = \int_{\R^N} \frac{u(\bar x)-u(y)}{|\bar x-y|^{N+2s}}\,dy \ge 0.
\]
Since $f(u) \le 0$ in $\R^N$, this forces $(-\Delta)^s u (\bar x) = 0$, and in turn $u(x) = u(\bar x) = a$ for every $x \in \R^N$, in contradiction with the fact that $u(x) \to 0$ as $|x| \to +\infty$.

\section{Overdetermined problems in annular sets}\label{sec: over annular}

The strategy of the proof of Theorem \ref{thm: main 2} is similar to that of Theorem \ref{thm: main 1}. We apply the moving planes method to show that for any direction $e \in \mathbb{S}^{N-1}$ there exists $\bar \lambda=\bar \lambda(e)$ such that both the sets $G$ and $\Omega$, and the solution $u$, are symmetric with respect to the hyperplane $T_{e,\bar \lambda(e)}$. We fix at first $e=e_N$ and, for $\lambda \in \R$, 
we let $T_\lambda, H_\lambda, x^\lambda,\dots$ be defined as in \eqref{notation}. We only modify the definitions of $\Sigma_\lambda$ in the following way:
\[
\Sigma_{\lambda} := (\Omega \cap H_\lambda) \setminus \overline{G^\lambda}.
\]
Furthermore, instead of $d$ and $\bar \lambda$ we define 
\begin{align*}
d_G &:= \inf\{ \lambda \in \R: \text{$T_\mu \cap \overline{G}= \emptyset$ for every $\mu > \lambda$}\} \\
d_\Omega &:= \inf\{ \lambda \in \R: \text{$T_\mu \cap \overline{\Omega}= \emptyset$ for every $\mu > \lambda$}\} \\
\bar \lambda_G &:= \inf\left\{ \lambda \le d_G\left| \begin{array}{l}
\text{$(\overline{G} \cap H_\mu)^\mu \subset G \cap H_\mu^\mu$ with strict inclusion, and} \\
\text{$\langle \nu(x),e_N \rangle >0$ for every $x \in T_\mu \cap \pa G$, for every $\mu > \lambda$}
\end{array} \right.\right\} \\
\bar \lambda_\Omega &:= \inf\left\{ \lambda \le d_\Omega\left| \begin{array}{l}
\text{$(\overline{\Omega} \cap H_\mu)^\mu \subset \Omega \cap H_\mu^\mu$ with strict inclusion, and} \\
\text{$\langle \nu(x),e_N \rangle >0$ for every $x \in T_\mu \cap \pa \Omega$, for every $\mu > \lambda$}
\end{array} \right.\right\} \\
\bar \lambda&:= \max \{\bar \lambda_G, \bar \lambda_\Omega\}.
\end{align*}
Note that $\bar \lambda_G$ and $\bar \lambda_\Omega$ are the critical positions for $G$ and $\Omega$, respectively, and $\bar \lambda$ can be considered as a critical position for $\Omega \setminus \overline{G}$. As in the previous section, we start with a simple geometric observation.

\begin{lemma}\label{lem: geom remark bdd}
The following properties hold:
\begin{itemize}
\item[($i$)] for $\lambda \ge \bar \lambda_G$, the set $\overline{G} \cap H_\lambda$ is convex in the $e_N$ direction;
\item[($ii$)] for $\lambda \ge \bar \lambda_{\Omega}$, the set $\overline{\Omega} \cap H_\lambda$ is convex in the $e_N$ direction.
\end{itemize}
\end{lemma}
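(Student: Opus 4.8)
The plan is to prove Lemma~\ref{lem: geom remark bdd} by essentially the same argument used for part~($i$) of Lemma~\ref{lem: geom remark}, carried out separately for $G$ and for $\Omega$. The two statements are formally identical once one replaces the relevant set and its critical value, so I would write the proof for $G$ in detail and then remark that the proof for $\Omega$ is word-for-word the same with $\bar\lambda_G$ replaced by $\bar\lambda_\Omega$.

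\begin{proof}
We prove ($i$); the proof of ($ii$) is identical, with $G$ and $\bar\lambda_G$ replaced by $\Omega$ and $\bar\lambda_\Omega$ respectively.

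Fix $\lambda\ge\bar\lambda_G$ and suppose, for the sake of contradiction, that $\overline{G}\cap H_\lambda$ is not convex in the $e_N$ direction. Then there exist $x'\in\R^{N-1}$ and reals $\lambda\le t<x_N$ such that $(x',x_N)\in\overline{G}\cap H_\lambda$ but $(x',t)\notin\overline{G}$. Set $\lambda':=\frac{x_N+t}{2}$. Since $t\ge\lambda\ge\bar\lambda_G$ we have $\lambda'>\bar\lambda_G$, and moreover $(x',t)=(x',x_N)^{\lambda'}$, so that $(x',t)$ is the reflection of the point $(x',x_N)\in\overline{G}\cap H_{\lambda'}$ with respect to $T_{\lambda'}$. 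But by the definition of $\bar\lambda_G$, for every $\mu>\bar\lambda_G$ the strict inclusion $(\overline{G}\cap H_\mu)^\mu\subset G\cap H_\mu^\mu$ holds, and in particular the reflection of $\overline{G}\cap H_{\lambda'}$ with respect to $T_{\lambda'}$ is contained in $G$. Hence $(x',t)\in G$, contradicting $(x',t)\notin\overline{G}$. This proves that $\overline{G}\cap H_\lambda$ is convex in the $e_N$ direction.
\end{proof}

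The only point requiring a little care is that here, unlike in the exterior case, the critical value for $G$ and the critical value for $\Omega$ need not coincide, so the convexity of $\overline{G}\cap H_\lambda$ is asserted only for $\lambda\ge\bar\lambda_G$ and that of $\overline{\Omega}\cap H_\lambda$ only for $\lambda\ge\bar\lambda_\Omega$ — one cannot conflate them under a single $\bar\lambda$. Beyond this bookkeeping, there is no real obstacle: the argument is purely geometric and uses nothing about the equation, only the defining property of $\bar\lambda_G$ (respectively $\bar\lambda_\Omega$) that the reflected cap does not leave the set for parameters above the critical value.
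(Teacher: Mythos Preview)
Your proof is correct and follows exactly the approach the paper intends: the authors simply write that the proof is analogous to that of Lemma~\ref{lem: geom remark} and omit it, and you have reproduced precisely that argument (indeed slightly more carefully with respect to closures). Your remark about keeping the two critical values $\bar\lambda_G$ and $\bar\lambda_\Omega$ separate is also on point.
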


The proof is analogue to that of Lemma \ref{lem: geom remark}, and thus is omitted.

For $w_\lambda(x):= u(x^\lambda)-u(x)$, we have that 
\[
(-\Delta)^s w_\lambda+c_\lambda(x) w_\lambda = 0 \qquad \text{in $\Sigma_\lambda$},
\]
exactly as in the previous section (we refer to \eqref{def: c_lambda} for the definition of $c_\lambda$). In the first part of the proof, we aim at showing that the set $\Lambda$ defined by 
\[
\Lambda:=\left\{ \lambda \in (\bar \lambda, d_\Omega): \text{$w_\mu \ge 0$ in $\Sigma_\mu$ for every $\mu \ge \lambda$} \right\}
\]
coincides with the interval $(\bar \lambda,d_{\Omega})$, that $w_{\lambda}>0$ for every $\lambda \in \Lambda$, and that $w_{\bar \lambda} \equiv 0$ in $\Sigma_{\bar \lambda}$. Since we are not assuming that $f$ is monotone (not even for small value of its argument), the argument in Lemma \ref{lem: moving initial} does not work. Nevertheless, we can take advantage of the boundedness of $\Omega$ to apply the maximum principle in domain of small measure.

\begin{lemma}\label{lem: moving initial bdd}
There exists $\sigma >0$ such that $w_{\lambda} \ge 0$ in $\Sigma_\lambda$ for every $\lambda \in (d_{\Omega}-\sigma,d_{\Omega})$.
\end{lemma}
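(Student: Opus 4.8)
The plan is to exploit the boundedness of $\Omega$ together with the maximum principle in domains of small measure (Proposition~\ref{SMALL}) applied in the half-space $H_\lambda$. First I would observe that when $\lambda$ is just slightly below $d_\Omega$, the reduced half-space $\Sigma_\lambda = (\Omega \cap H_\lambda)\setminus \overline{G^\lambda}$ is contained in the thin slab $H_\lambda \cap \{x_N < d_\Omega\}$, and since $\Omega$ is bounded, the Lebesgue measure of $\Sigma_\lambda$ tends to $0$ as $\lambda \uparrow d_\Omega$. Hence, given the $L^\infty$ bound on $c_\lambda$ coming from~\eqref{def: c_lambda} (recall $u$ is bounded and $f$ is locally Lipschitz), we may fix $c_\infty := \|c_\lambda\|_{L^\infty(\R^N)}$, take the corresponding $\delta = \delta(N,s,c_\infty)$ from Proposition~\ref{SMALL}, and then choose $\sigma>0$ so small that $|\Sigma_\lambda| < \delta$ for every $\lambda \in (d_\Omega - \sigma, d_\Omega)$.

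Next I would check that $w_\lambda$ verifies the hypotheses of Proposition~\ref{SMALL} with $H = H_\lambda$ and $U = \Sigma_\lambda$. By construction $w_\lambda$ is anti-symmetric with respect to $T_\lambda = \partial H_\lambda$, and it solves $(-\Delta)^s w_\lambda + c_\lambda(x) w_\lambda = 0$ in $\Sigma_\lambda$, so the differential inequality is satisfied (with equality) there. The remaining point is to verify that $w_\lambda \ge 0$ on $H_\lambda \setminus \Sigma_\lambda$. This set splits into the part lying in $\overline{G^\lambda} \cap H_\lambda$ and the part lying in $(\R^N \setminus \overline{\Omega}) \cap H_\lambda$. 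On $\overline{G^\lambda}\cap H_\lambda$ we have $u(x^\lambda) = a$ (since $x^\lambda \in \overline{G}$) while $u(x) \le a$, giving $w_\lambda(x) = a - u(x) \ge 0$; here one uses the standing assumption $0 < u < a$ in $\Omega \setminus \overline G$ together with $u \le a$ globally, so in fact one should be slightly careful near $\partial\Omega$. On $(\R^N\setminus\overline\Omega)\cap H_\lambda$ we have $u(x) = 0$, while $x^\lambda$ lies in $\overline\Omega$ for $\lambda$ close to $d_\Omega$ (by the geometry of the reflected sets, cf.\ Lemma~\ref{lem: geom remark bdd}), so $u(x^\lambda) \ge 0$ and thus $w_\lambda(x) = u(x^\lambda) \ge 0$. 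With all hypotheses checked, Proposition~\ref{SMALL} yields $w_\lambda \ge 0$ in $\Sigma_\lambda$ for every $\lambda \in (d_\Omega - \sigma, d_\Omega)$.

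The step I expect to be the main obstacle is the verification of the boundary condition $w_\lambda \ge 0$ on $H_\lambda \setminus \Sigma_\lambda$, and in particular the bookkeeping of which reflected points $x^\lambda$ land inside $\overline\Omega$ versus outside: one must use that for $\lambda$ slightly below $d_\Omega$ the reflection $(\overline\Omega \cap H_\lambda)^\lambda$ lies well inside $\Omega$, so that any $x \in H_\lambda$ with $x_N$ close to $\lambda$ has $x^\lambda \in \overline\Omega$, and simultaneously that $G^\lambda \subset \Omega$ so that $\overline{G^\lambda}\cap H_\lambda$ really is a subset of the ``known sign'' region. A secondary technical point is the justification that $w_\lambda$ is an admissible competitor in the weak formulation underlying Proposition~\ref{SMALL}; this follows from $u \in \mathcal{C}^s(\R^N) \cap L^\infty(\R^N) \cap \mathcal{D}^s(\R^N)$ and the fact that reflections preserve these spaces. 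Once these are in place the conclusion is immediate, so the lemma reduces essentially to a measure-smallness estimate plus a careful but elementary sign analysis on the complement of the reduced half-space.
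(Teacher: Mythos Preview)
Your approach is exactly the one in the paper: bound $\|c_\lambda\|_{L^\infty}$ uniformly in $\lambda$, use that $|\Sigma_\lambda|\to 0$ as $\lambda\uparrow d_\Omega$, verify $w_\lambda\ge 0$ on $H_\lambda\setminus\Sigma_\lambda$, and apply Proposition~\ref{SMALL}. One small correction: your claim that for $x\in(\R^N\setminus\overline\Omega)\cap H_\lambda$ the reflection $x^\lambda$ lies in $\overline\Omega$ is false in general (take $x$ far from $\Omega$ in $H_\lambda$), and is in any case unnecessary---since $u\ge 0$ everywhere, $w_\lambda(x)=u(x^\lambda)-0\ge 0$ holds regardless of where $x^\lambda$ lands, so the ``bookkeeping'' you flag as the main obstacle is actually trivial.
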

\begin{proof}
Since $f$ is Lipschitz continuous and $u$ is bounded, there exists $c_\infty>0$ independent of $\lambda$ such that $\|c_{\lambda}\|_{L^\infty(\R^N)} \le c_\infty$. Then, it is well defined, and independent on $\lambda$, the value $\delta=\delta(N,s,c_\infty)$ as in Proposition \ref{SMALL}. For $\lambda$ a little smaller than $d_{\Omega'}$, the measure of $\Sigma_\lambda$ is smaller than $\delta$, and the function $w_\lambda$ satisfies
\[
\begin{cases}
(-\Delta)^s w_\lambda + c_\lambda(x) w_\lambda = 0 & \text{in $\Sigma_{\lambda}$} \\
w_\lambda(x) = - w_\lambda(x^\lambda) \\
w_{\lambda} \ge 0 & \text{in $H_\lambda \setminus \Sigma_\lambda$}.
\end{cases}
\] 
As a consequence, by Proposition \ref{SMALL} we deduce that $w_\lambda \ge 0$ in $\Sigma_\lambda$.
\end{proof}

This means that the hyperplane $T_\lambda$ moves and reaches a position $\mu = \inf \Lambda <d_\Omega$. We aim at showing that $\mu = \bar \lambda$. This is the object of the next two lemmas.

\begin{lemma}\label{lem: condizione simmetria bdd}
Let $\lambda \ge \bar \lambda$. If $w_{\lambda}(x) =0$ for $x \in \Sigma_{\lambda}$, then both $G$ and $\Omega$ are symmetric with respect to $T_\lambda$. In particular, if $\lambda>\bar \lambda$, then $w_\lambda \ge 0$ in $\Sigma_\lambda$ implies $w_\lambda>0$ therein.
\end{lemma}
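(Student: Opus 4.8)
The plan is to adapt the argument of Lemma~\ref{lem: condizione per simmetria} to the annular setting, keeping in mind that now there are two free boundaries, $\partial G$ and $\partial\Omega$. As in the exterior case, the starting point is the strong maximum principle for anti-symmetric functions (Proposition~\ref{STRONG}): since $w_\lambda$ is anti-symmetric with respect to $T_\lambda$, satisfies $(-\Delta)^s w_\lambda + c_\lambda w_\lambda = 0$ in $\Sigma_\lambda$ and $w_\lambda \ge 0$ in $H_\lambda$ (this last fact holding whenever $\lambda\in\Lambda$, which is the situation in the final assertion), the vanishing of $w_\lambda$ at one interior point of $\Sigma_\lambda$ forces $w_\lambda\equiv 0$ in all of $H_\lambda$. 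Thus the whole question reduces to showing that $w_\lambda\equiv 0$ in $H_\lambda$ implies the symmetry of both $G$ and $\Omega$ with respect to $T_\lambda$.

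First I would treat $G$ exactly as in the proof of Lemma~\ref{lem: condizione per simmetria}. One checks the elementary implication $G\cap H_\lambda^\lambda = (G\cap H_\lambda)^\lambda \Rightarrow G = G^\lambda$, so if $G$ were not symmetric then, using the strict inclusion $(\overline G\cap H_\lambda)^\lambda\subset G\cap H_\lambda^\lambda$ valid for $\lambda>\bar\lambda_G$, we would get a nonempty set $E := (\overline G\cap H_\lambda^\lambda)\setminus(\overline G\cap H_\lambda)^\lambda$. For $x\in E^\lambda\subset H_\lambda$ we have $x^\lambda\in E\subset\overline G$, hence $u(x^\lambda)=a$, while $x$ lies in $\Omega\setminus\overline G$ (or outside $\overline\Omega$), so $u(x)<a$; therefore $w_\lambda(x)=u(x^\lambda)-u(x)>0$, contradicting $w_\lambda\equiv 0$ in $H_\lambda$. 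The symmetry of $\Omega$ is handled by the mirror-image argument, now exploiting the \emph{upper} constraint: since $w_\lambda\equiv 0$ means $u(x^\lambda)=u(x)$ throughout $H_\lambda$, if $\Omega$ were not symmetric then, using the strict inclusion $(\overline\Omega\cap H_\lambda)^\lambda\subset\Omega\cap H_\lambda^\lambda$ for $\lambda>\bar\lambda_\Omega$, there is a nonempty $F:=(\overline\Omega\cap H_\lambda^\lambda)\setminus(\overline\Omega\cap H_\lambda)^\lambda$; for $x\in F^\lambda\subset H_\lambda$ the reflected point $x^\lambda\in F\subset\overline\Omega$ while $x\in\R^N\setminus\overline\Omega$, so $u(x)=0$ and $u(x^\lambda)>0$ by positivity of $u$ inside $\Omega\setminus\overline G$, whence $w_\lambda(x)=u(x^\lambda)-u(x)>0$, again a contradiction. (One should note that $x^\lambda$ lands in $\Omega\setminus\overline G$ rather than in $\overline G$, which is where the geometric convexity from Lemma~\ref{lem: geom remark bdd} and the hypothesis $\lambda\ge\bar\lambda$ are used to rule out the degenerate configurations.)

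For the ``in particular'' clause: if $\lambda>\bar\lambda$ and $w_\lambda\ge 0$ in $\Sigma_\lambda$ but $w_\lambda$ vanishes somewhere in $\Sigma_\lambda$, then by the first part both $G$ and $\Omega$ are symmetric with respect to $T_\lambda$; but by the very definition of $\bar\lambda=\max\{\bar\lambda_G,\bar\lambda_\Omega\}$ as a critical position, neither $G$ nor $\Omega$ can be symmetric with respect to any hyperplane $T_\lambda$ with $\lambda>\bar\lambda$ — this contradiction shows $w_\lambda>0$ in $\Sigma_\lambda$.

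The main obstacle I anticipate is the bookkeeping of \emph{which} region the reflected point $x^\lambda$ of a point $x\in E^\lambda$ (or $F^\lambda$) actually falls into: one must make sure that for $\lambda>\bar\lambda$ the relevant portions of $\partial G$ and $\partial\Omega$ do not touch, so that the strict inclusions hold and the sign comparisons $u(x^\lambda)=a>u(x)$ and $u(x^\lambda)>0=u(x)$ are genuinely available. This is where one invokes $\lambda\ge\bar\lambda$, the class $\mathcal C^2$ regularity (which forbids internal tangency for $\lambda>\bar\lambda$), and the convexity in the $e_N$-direction from Lemma~\ref{lem: geom remark bdd}; the analytic input beyond that is just Proposition~\ref{STRONG}, exactly as in the exterior case.
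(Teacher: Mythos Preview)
Your proposal is correct and follows essentially the same approach as the paper: apply Proposition~\ref{STRONG} to get $w_\lambda\equiv 0$ in $H_\lambda$, then argue by contradiction for $G$ (via the set $E$) and for $\Omega$ (via the set $F$) exactly as you describe, and deduce the ``in particular'' from the definition of $\bar\lambda$. One small remark: your worry that ``$x^\lambda$ lands in $\Omega\setminus\overline G$ rather than in $\overline G$'' and the attendant appeal to Lemma~\ref{lem: geom remark bdd} are unnecessary --- if $x^\lambda\in\overline G$ then $u(x^\lambda)=a>0$, which works just as well for the $\Omega$-argument, so the paper does not invoke any convexity here.
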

\begin{proof}
By the strong maximum principle, if $w_{\lambda}(x) =0$ for $x \in \Sigma_{\lambda}$, then $w_\lambda \equiv 0$ in $H_\lambda$. As in the proof of Lemma \ref{lem: condizione per simmetria}, this implies that $G$ is symmetric with respect to $T_\lambda$, that is, $\lambda= \bar \lambda_G$. It remains to show that $\lambda$ is also equal to $\bar \lambda_\Omega$, and $\Omega$ is symmetric with respect to $T_\lambda$. To this aim, we observe that if this is not the case, then 
\[
F:= (\overline{\Omega} \cap H_\lambda^\lambda) \setminus (\overline{\Omega} \cap H_\lambda)^\lambda \neq \emptyset.
\]
Therefore, if $x \in F^\lambda \subset H_\lambda$, we have 
\begin{align*}
 x^\lambda \in F \subset \overline{\Omega} \quad \Longrightarrow \quad  u(x^\lambda) > 0 \\
 x \in \R^N \setminus \overline{\Omega} \quad \Longrightarrow \quad u(x) = 0,
 \end{align*}
and hence $w_\lambda(x) >0$, a contradiction. Then also $\Omega$ is symmetric with respect to $T_\lambda$, which forces $\lambda=\bar \lambda_\Omega$.
\end{proof}

\begin{lemma}\label{4.4}
There holds $\Lambda=(\bar \lambda,d_\Omega)$.
\end{lemma}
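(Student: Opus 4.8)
The plan is to show that $\inf\Lambda=\bar\lambda$ by the standard moving-planes continuation argument, adapted to the annular setting. We already know from Lemma~\ref{lem: moving initial bdd} that $\Lambda\neq\emptyset$, so $\mu:=\inf\Lambda\in[\bar\lambda,d_\Omega)$ is well defined, and by continuity $w_\mu\geq 0$ in $\Sigma_\mu$. Suppose for contradiction that $\mu>\bar\lambda$. Then Lemma~\ref{lem: condizione simmetria bdd} gives $w_\mu>0$ in $\Sigma_\mu$. Since $\mu=\inf\Lambda$, there are sequences $\bar\lambda<\lambda_k<\mu$, $\lambda_k\to\mu$, and points $x_k\in\Sigma_{\lambda_k}$ with $w_{\lambda_k}(x_k)<0$; because $w_{\lambda_k}$ vanishes on $\partial\Sigma_{\lambda_k}$ and on $T_{\lambda_k}$, we may take $x_k$ to be an interior minimum point of $w_{\lambda_k}$ in $\Sigma_{\lambda_k}$. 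Since $\Omega$ is bounded, $\{x_k\}$ is bounded; passing to a subsequence, $x_k\to\bar x\in\overline{\Sigma_\mu}$. Using $u\in\mathcal{C}^s(\R^N)$ and the interior $\mathcal{C}^{1,\sigma}$ regularity exactly as in the proof of Lemma~\ref{lem: moving continuation}, one gets $w_{\lambda_k}\to w_\mu$ in $\mathcal{C}^{s'}_{\loc}$ (and in $\mathcal{C}^1_{\loc}$ away from $\partial G^\mu\cup\partial\Omega$), hence $w_\mu(\bar x)=0$; by Lemma~\ref{lem: condizione simmetria bdd} this forces $\bar x\in\partial\Sigma_\mu$, a boundary point.

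Next I would enumerate the possibilities for the location of $\bar x$ on $\partial\Sigma_\mu$ and derive a contradiction in each, mirroring the three cases of Lemma~\ref{lem: moving continuation} but now with the extra piece of boundary coming from $\partial\Omega$. The cases are: (1) $\bar x$ lies on the regular part of $\partial\Sigma_\mu\cap T_\mu$; (2) $\bar x\in\partial(G^\mu\cap H_\mu)\setminus T_\mu$; (3) $\bar x\in\partial G^\mu\cap T_\mu$; (4) $\bar x\in\partial\Omega\cap H_\mu$ (away from $T_\mu$); (5) $\bar x\in\partial\Omega\cap T_\mu$ (the singular part). Cases (1)–(3) are handled verbatim as in Lemma~\ref{lem: moving continuation}: in (1) one applies the reflected Hopf lemma, Proposition~\ref{prop: new hopf}, to the equation for $w_\mu$ in $\Omega\cap\Omega^\mu$ to contradict $\nabla w_\mu(\bar x)=0$; in (2) one uses $u<a$ near $\bar x$ to get $w_\mu(\bar x)=a-u(\bar x)>0$; in (3) one uses that $u$ is constant on $\partial G$ together with $(\pa_\nu)_s u=\alpha_i$ on $\partial G_i$ and Lemma~\ref{lem: monotonia} to see that $w_{\lambda_k}(x_k)>0$ for large $k$. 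Observe that case (3) is subtler here since $\alpha_i$ may be nonnegative; however $w_\mu\equiv 0$ in $H_\mu$ would then follow from Proposition~\ref{STRONG}, and one argues as in Lemma~\ref{lem: condizione simmetria bdd} that $G$ and $\Omega$ must already be symmetric with respect to $T_\mu$, contradicting $\mu>\bar\lambda$. For case (4), since $\mu>\bar\lambda\geq\bar\lambda_\Omega$ and $\mu>\bar\lambda_G$, the reflected cap $G^\mu$ sits strictly inside $\Omega$ away from $\partial\Omega$, so near $\bar x$ the point $\bar x^\mu$ lies in $\Omega\setminus\overline{G}$ where $u>0$ by hypothesis, while $u(\bar x)=0$ since $\bar x\in\partial\Omega$; hence $w_\mu(\bar x)=u(\bar x^\mu)-u(\bar x)=u(\bar x^\mu)>0$, contradicting $w_\mu(\bar x)=0$. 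For case (5) one runs the same argument as in case (3) but with the roles of inside/outside reversed: on $\partial\Omega$ we have $u=0$ and $(\pa_\nu)_s u=\beta$ (constant on the relevant component of $\partial\Omega$, using the analogue of \cite[Lemma~2.1]{Sirakov} to rule out $\bar x$ being simultaneously on two reflected components), and since $u/\delta_\Omega^s$ is $\mathcal{C}^{0,\gamma}$ and $u$ is constant along $\partial\Omega$, Lemma~\ref{lem: monotonia} gives monotonicity of $u$ in the $e_N$ direction near $\bar x$, forcing $w_{\lambda_k}(x_k)>0$ for large $k$, a contradiction. This exhausts all cases, so $\mu=\bar\lambda$.

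Finally I would record the conclusion $\Lambda=(\bar\lambda,d_\Omega)$: once $\mu=\bar\lambda$ is known, $w_\lambda\geq 0$ in $\Sigma_\lambda$ for every $\lambda\in(\bar\lambda,d_\Omega)$, and combined with Lemma~\ref{lem: condizione simmetria bdd} this also yields $w_\lambda>0$ there for $\lambda>\bar\lambda$.

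I expect the main obstacle to be the new boundary case (4)–(5) arising from $\partial\Omega$, in particular case (5) where $\bar x$ is on the singular (dimension $N-2$) part $\partial\Omega\cap T_\mu$: there we cannot directly apply Proposition~\ref{STRONG} (the Hopf lemma fails on $\partial H$) nor Proposition~\ref{prop: new hopf} (which is stated for the regular part), so one must fall back on the sign/monotonicity argument of case (3) and check carefully that the constancy of $(\pa_\nu)_s u$ on each component of $\partial\Omega$, the non-tangency $\langle\nu(\bar x),e_N\rangle>0$ guaranteed by $\mu>\bar\lambda_\Omega$, and the boundary regularity $u/\delta_\Omega^s\in\mathcal{C}^{0,\gamma}$ combine to give the strict sign $w_{\lambda_k}(x_k)>0$; the precompactness/convergence bookkeeping must also be done with some care near both boundary pieces simultaneously.
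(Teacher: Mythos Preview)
Your approach is genuinely different from the paper's, and the difference is not cosmetic. The paper does \emph{not} adapt the compactness-and-case-analysis argument of Lemma~\ref{lem: moving continuation} to the annular setting; it instead exploits the boundedness of $\Sigma_\mu$ through the maximum principle for antisymmetric functions in sets of small measure (Proposition~\ref{SMALL}). Concretely: since $w_\mu>0$ on the bounded set $\Sigma_\mu$, one fixes a compact $K\Subset\Sigma_\mu$ with $|\Sigma_\mu\setminus K|<\delta/2$ and $w_\mu\ge C>0$ on $K$; by continuity the same holds (with $\delta$ and $C/2$) for $\lambda$ close to $\mu$, and then Proposition~\ref{SMALL} applied on $\Sigma_\lambda\setminus K$ yields $w_\lambda\ge0$ in $H_\lambda$, contradicting the minimality of $\mu$. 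No boundary analysis is needed at all.

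Your route has a real gap at the corner cases (3) and (5). In the exterior Lemma~\ref{lem: moving continuation}, case~3 used the hypothesis $(\pa_\nu)_s u=\alpha_i<0$ to obtain, via Lemma~\ref{lem: monotonia}, strict $e_N$-monotonicity of $u$ near $\bar x$ and hence $w_{\lambda_k}(x_k)>0$. In Theorem~\ref{thm: main 2} no sign is assumed on $\alpha_i$, so this step is unavailable; your proposed fallback (``$w_\mu\equiv0$ would then follow from Proposition~\ref{STRONG}'') is not a valid deduction, because $\bar x\in\partial G\cap T_\mu$ is a \emph{boundary} point of $\Sigma_\mu$ and we have already established $w_\mu>0$ in the interior, so the strong maximum principle gives nothing new. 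Case~(5) is worse: on $\partial\Omega$ one has $u=0$, $u>0$ inside, so if anything one expects $(\pa_\nu)_s u=\beta>0$; combined with $\langle\nu(\bar x),e_N\rangle>0$ this makes $u$ \emph{increasing} in $e_N$ near $\bar x$, which is perfectly consistent with $w_{\lambda_k}(x_k)=u(x_k^{\lambda_k})-u(x_k)<0$ and yields no contradiction. This is exactly why the paper switches methods in the bounded setting (cf.\ Remark~\ref{rem: on regularity}): the absence of sign conditions on the Neumann data blocks the corner argument, and the small-measure maximum principle sidesteps the issue entirely.
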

\begin{proof}
By contradiction, suppose that~$\mu=\inf \Lambda>\bar \lambda$. Differently from the previous section, we use the again maximum principle in sets of small measure. Let $\delta$ as in Proposition \ref{SMALL} (we have already observed in the proof of Lemma \ref{lem: moving initial bdd} that $\delta$ can be chosen independently of $\lambda$). By Lemma \ref{lem: condizione simmetria bdd}, $w_\mu >0$ in $\Sigma_\mu$. Thus, there exists a compact set $K \Subset \Sigma_{\mu}$ such that 
\[
|\Sigma_\mu \setminus K| < \delta/2 \quad \text{and} \quad w_{\mu} \ge C >0 \text{ in $K$},
\]
where we have used the boundedness of $\Sigma_\mu$. Furthermore, observing that by continuity $\Sigma_{\lambda} \to \Sigma_{\mu}$ as $\lambda \to \mu$, we can suppose that $K \Subset \Sigma_{\lambda}$ provided $|\lambda-\mu|<\eps$ for some $\eps>0$ sufficiently small. If necessary replacing $\eps$ with a smaller quantity, by continuity again it follows that
\[
|\Sigma_\lambda \setminus K| < \delta \quad \text{and} \quad w_{\lambda} \ge \frac{C}{2} \text{ in $K$}
\]
whenever $|\lambda-\mu|<\eps$. For such values of $\lambda$, in the remaining part $\tilde \Sigma_{\lambda}= \Sigma_\lambda \setminus K$ we have 
\[
\begin{cases}
(-\Delta)^s w_{\lambda}+ c_\lambda w_\lambda = 0 & \text{in $\tilde \Sigma_\lambda$} \\
w_\lambda(x) = -w_\lambda(x^\lambda) \\
w_\lambda \ge 0 & \text{in $H_\lambda \setminus \tilde \Sigma_\lambda$},
\end{cases}
\]
which means that we are in position to apply Proposition \ref{SMALL}, deducing that $w_\lambda \ge 0$ in $H_\lambda$. In particular, for $\mu-\eps<\lambda \le \mu$ we obtain $w_\lambda >0$ in $\Sigma_\lambda$ thanks to Lemma \ref{lem: condizione simmetria bdd}, in contradiction with the minimality of $\mu$.  
\end{proof}

\begin{proof}[Conclusion of the proof of Theorem \ref{thm: main 2}]
We proved in Lemma~\ref{4.4}
that $\Lambda=(\bar \lambda,d_{\Omega})$. Hence, by Lemma \ref{lem: condizione simmetria bdd}, to obtain the symmetry of $G$ and of $\Omega$, it is sufficient to check that $w_{\bar \lambda} \equiv 0$ in $H_{\bar \lambda}$. As in the proof of Theorem \ref{thm: main 1}, we argue by contradiction assuming that $w_{\bar \lambda} >0$ in $\Sigma_{\bar \lambda}$.
Note that the critical position $\bar \lambda$ can be reached for four possible reasons: internal tangency for $G$, internal tangency for $\Omega$, orthogonality condition for $G$, orthogonality condition for $\Omega$. In all such cases we can reach a contradiction exactly as in the conclusion of the proof of Theorem \ref{thm: main 1}. This proves that both $\Omega$ and $G$ are symmetric with respect to $T_{\bar \lambda}$, and by Lemma \ref{lem: geom remark bdd}, they are also convex in the $e_N$ direction. If $\Omega$ has several two connected components $\Omega_1$ and $\Omega_2$, then by convexity the only possibility is that $\Omega_1$ and $\Omega_2$ are aligned along $T_{\bar \lambda}$. But in this case we can obtain a contradiction as in the conclusion of the proof of Theorem \ref{thm: main 1}. On $G$ we can argue exactly in the same way.
\end{proof}

\subsection{Proof of Theorem \ref{thm: main 2 prime}}

The proof differs only for some details from that of Theorem \ref{thm: main 1 prime}, and thus it
is only sketched. First of all, by monotonicity $c_\lambda \ge 0$ in $\R^N$ for every $\lambda$, and hence by Proposition 3.1 in \cite{FallJarohs} (weak maximum principle for anti-symmetric functions) we have that $w_\lambda \ge 0$ in $\Sigma_\lambda$ for every $\lambda \ge \bar \lambda$. Moreover, as in the conclusion of the proof of Theorem \ref{thm: main 2}, $w_{\bar \lambda} \equiv 0$ in $H_{\bar \lambda}$. Repeating the same argument for any direction $e \in \mathbb{S}^{N-1}$, we deduce by Proposition \ref{prop: criterion} that $u$ is radially symmetric and radially non-increasing in $\R^N$, which implies that $\{u=a\}=B_1$ and $\{u>0\}=B_2$ are concentric balls, and $B_1 \subset B_2$. By monotonicity, $G \subset B_1$ and $B_2 \subset \Omega$. Arguing as in the proof of Theorem \ref{thm: main 1 prime}, we deduce that $G=B_1$ and $\Omega=B_2$.

\section{Radial symmetry}\label{sec: radial}

\subsection{Proof of Theorem \ref{thm: radial 1}}
We briefly describe how the proof of Theorem \ref{thm: main 1} can be adapted to obtain Theorem \ref{thm: radial 1}. Without loss of generality, we suppose that $x_0$,  the centre of the cavity, is $0$. Using the same notation introduced in Section \ref{sec: over exterior}, see \eqref{notation}, we observe that for any direction $e \in \mathbb{S}^{N-1}$ the critical position $\bar \lambda(e)$ is reached for $\lambda=0$.  Let us fix $e=e_N$, and let us introduce
\[
\Lambda:= \left\{ \lambda \ge 0: \text{$w_\mu \ge 0$ in $\Sigma_\mu$ for every $\mu \ge \lambda$} \right\}.
\]
We aim at proving that $\Lambda=(0,+\infty)$, and that $w_\lambda > 0$ in $\Sigma_\lambda$ for every $\lambda>0$. Once that this is proved, we can repeat the argument with $e=-e_N$. Since the critical position for $e_N$ and $-e_N$ is the same, we have 
\[
T_{e_N,\bar \lambda(e_N)} = T_{-e_N, \bar \lambda(-e_N)} = \{x_N=0\},
\]
from which we infer that $u$ is symmetric with respect to $\{x_N=0\}$, and strictly decreasing in the $x_N$ variable outside $B_{\rho}(0)$. Symmetry and monotonicity in all the other directions can be obtained in the same way.

As in Lemma \ref{lem: moving initial}, we can show that $\Lambda \neq \emptyset$. Once that this is done, as in Lemmas \ref{lem: condizione per simmetria} and \ref{lem: moving continuation}, we can show that $\Lambda=(0,+\infty)$. This completes the proof. Notice that assumption \ref{cond Neumann} is used in Lemma \ref{lem: moving continuation}, case 3).

\begin{remark}
When $\Omega$ is bounded, by using the method in Section \ref{sec: over annular}, we see that the conclusion of Theorem \ref{thm: radial 1} remains true if we assume that only one between $\Omega$ and $G$ is a ball, and prescribe constant $s$-Neumann boundary condition on the other component. For the same result in the local case, we refer to \cite[Theorem 5]{Sirakov}. 
\end{remark} 

\subsection{Proof of Theorem \ref{thm: radial point}}

Without loss of generality, we suppose that $x_0=0$. Using the same notation introduce in Section \ref{sec: over exterior}, we fix $e=e_N$ and observe that 
\[
\Sigma_\lambda=H_\lambda \setminus \{(0',2\lambda)\} \qquad \forall \lambda >0,
\]
and $\bar \lambda=0$ is the critical position for the hyperplane $T_\lambda$. We slightly modify the definition of $\Lambda$ in the following way:
\[
\Lambda:= \left\{ \lambda>0: \text{$w_\mu >0$ in $\Sigma_\mu$ for every $\mu>\lambda$} \right\},
\]
where we recall that $w_\lambda(x) = u(x^\lambda)-u(x)$ satisfies the equation
\[
(-\Delta)^s w_\lambda+c_\lambda(x) w_\lambda = 0 \qquad \text{in $\Sigma_\lambda$},
\]
and $c_\lambda$ has been defined in \eqref{def: c_lambda}. We aim at showing that $\Lambda=(0,+\infty)$. This is the object of the next three lemmas.


\begin{lemma}
There exists $R>0$ sufficiently large such that $w_\lambda>0$ in $\Sigma_\lambda$ for every $\lambda>R$.
\end{lemma}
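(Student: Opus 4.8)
The plan is to mimic, almost word for word, the proof of Lemma~\ref{lem: moving initial}. I will first show that there is $R>0$ such that $w_\lambda\ge 0$ in $\Sigma_\lambda$ for every $\lambda>R$, and then upgrade this to the strict inequality via the strong maximum principle of Proposition~\ref{STRONG}. The only feature not already present in Lemma~\ref{lem: moving initial} is that here $\Sigma_\lambda$ is $H_\lambda$ with the single point $(0',2\lambda)=\{x_0\}^\lambda$ removed, rather than $H_\lambda$ minus the reflection of a $\mathcal{C}^2$ set.

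For the first step I would argue by contradiction, assuming that there are $\lambda_k\to+\infty$ and $x_k\in\Sigma_{\lambda_k}$ with $w_{\lambda_k}(x_k)<0$. Since $u\in\mathcal{C}^s(\R^N)$ (by Theorem~\ref{thm: regularity}, which is natural in the setting of Theorem~\ref{thm: radial point}), the function $w_{\lambda_k}$ is continuous on $\R^N$, vanishes on $T_{\lambda_k}$, tends to $0$ at infinity, and is negative somewhere in $H_{\lambda_k}$; hence its infimum over $\overline{H_{\lambda_k}}$ is negative and attained at some interior point, which I still call $x_k$. This point cannot be the puncture, because $u(x)\to0$ as $|x|\to+\infty$ forces $u((0',2\lambda_k))<a$ for $\lambda_k$ large, so $w_{\lambda_k}((0',2\lambda_k))=a-u((0',2\lambda_k))>0$. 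Thus $x_k\in\Sigma_{\lambda_k}$ and the equation $(-\Delta)^s w_{\lambda_k}+c_{\lambda_k}w_{\lambda_k}=0$ holds at $x_k$. Now the reflection computation \eqref{io3}--\eqref{conclusion computation delta s}, which only uses that $x_k$ minimizes $w_{\lambda_k}$ over $\overline{H_{\lambda_k}}$, yields $(-\Delta)^s w_{\lambda_k}(x_k)\le0$, with strict inequality unless $w_{\lambda_k}$ is a.e.\ constant. At the same time, since $(x_k)_N>\lambda_k\to+\infty$ we have $|x_k|\to+\infty$, hence $0\le u(x_k^{\lambda_k})<u(x_k)\to0$, so both arguments eventually lie in the interval where $f$ is non-increasing, and \eqref{def: c_lambda} gives $c_{\lambda_k}(x_k)\ge0$, so that $c_{\lambda_k}(x_k)w_{\lambda_k}(x_k)\le0$. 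Exactly as in Lemma~\ref{lem: moving initial}, the equation then forces $w_{\lambda_k}$ to be a.e.\ equal to the negative constant $w_{\lambda_k}(x_k)$, contradicting its anti-symmetry with respect to $T_{\lambda_k}$.

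For the second step I would fix $\lambda>R$: by Step~1 we have $w_\lambda\ge0$ in $\Sigma_\lambda$, and $w_\lambda((0',2\lambda))=a-u((0',2\lambda))>0$, so $w_\lambda\ge0$ in $H_\lambda$; moreover $w_\lambda$ is anti-symmetric and solves $(-\Delta)^s w_\lambda+c_\lambda w_\lambda=0$ in $\Sigma_\lambda\subset H_\lambda$ with $c_\lambda\in L^\infty(\R^N)$. Proposition~\ref{STRONG} then gives that either $w_\lambda>0$ in $\Sigma_\lambda$ or $w_\lambda\equiv0$ in $H_\lambda$; the latter would give $w_\lambda((0',2\lambda))=0$, i.e.\ $u((0',2\lambda))=a$, against $u((0',2\lambda))<a$. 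Hence $w_\lambda>0$ in $\Sigma_\lambda$ for every $\lambda>R$. The only (mild) obstacle is the bookkeeping around the puncture, i.e.\ ensuring that the minimum produced in the contradiction argument does not sit at $(0',2\lambda)$; this is guaranteed by the strict inequality $u((0',2\lambda))<a$ coming from the decay of $u$ at infinity, and everything else is the verbatim transcription of Lemma~\ref{lem: moving initial} together with the observation that $c_{\lambda_k}(x_k)\ge0$ because $x_k$ escapes to infinity and $f$ is non-increasing near $0$.
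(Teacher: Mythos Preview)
Your proposal is correct and follows essentially the same route as the paper: first obtain $w_\lambda\ge 0$ by repeating the contradiction argument of Lemma~\ref{lem: moving initial}, then upgrade to strict positivity via Proposition~\ref{STRONG}, ruling out $w_\lambda\equiv 0$ because it would force $u((0',2\lambda))=a$ against the decay of $u$. Your extra care in checking that the contradiction minimum cannot sit at the puncture $(0',2\lambda)$ is a welcome detail the paper leaves implicit.
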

\begin{proof}
Exactly as in Lemma \ref{lem: moving initial}, it is possible to show that $w_\lambda \ge 0$ in $\Sigma_\lambda$. By the strong maximum principle, Proposition \ref{STRONG}, either $w_\lambda>0$ in $\Sigma_\lambda$ or $w_\lambda \equiv 0$ in $H_\lambda$. 
If $w_\lambda \equiv 0$ in $H_\lambda$, then $u(0^\lambda) = u(0',2\lambda) = a$. On the other hand, since $u(x) \to 0$ as $|x| \to +\infty$ we have that for $\lambda$ very large $u(x) \le a/2$ in the whole half-space $H_\lambda$, a contradiction. 
\end{proof}

Thus, the quantity $\mu:= \inf \Lambda \ge 0$ is a real number.

\begin{lemma}\label{lem: monot}
The function $u$ is monotone strictly decreasing in $x_N$ in the half-space $\{x_N > \mu\}$.
\end{lemma}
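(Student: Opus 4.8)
The plan is to read off the strict monotonicity directly from the information already encoded in $\mu=\inf\Lambda$, with no new machinery. The key preliminary observation is that, by the very definition of $\Lambda$, for every $\lambda>\mu$ one has $w_\lambda>0$ throughout $\Sigma_\lambda$: indeed, choosing $\lambda'\in\Lambda$ with $\mu\le\lambda'<\lambda$, the membership $\lambda'\in\Lambda$ says precisely that $w_t>0$ in $\Sigma_t$ for all $t>\lambda'$, in particular for $t=\lambda$. So I would first record this, and then exploit it through the standard reflection pairing of points.

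Concretely, I would fix an arbitrary $x'\in\R^{N-1}$ and two heights $\mu<a_1<a_2$, set $\lambda:=(a_1+a_2)/2$ and $x:=(x',a_2)$, so that $\lambda>\mu$ and $x^\lambda=(x',a_1)$. Since $a_2>\lambda$ we have $x\in H_\lambda$, and since $a_1>\mu\ge0$ we have $a_1+a_2>a_2$, so $x$ is distinct from the punctured point $(0',2\lambda)=(0',a_1+a_2)$; hence $x\in\Sigma_\lambda=H_\lambda\setminus\{(0',2\lambda)\}$. The preliminary observation then gives $0<w_\lambda(x)=u(x^\lambda)-u(x)=u(x',a_1)-u(x',a_2)$. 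As $x'$, $a_1$ and $a_2$ were arbitrary, this is exactly the assertion that $u$ is strictly decreasing in the $x_N$ direction on $\{x_N>\mu\}$. This is the same reflection-pairing argument already used, e.g., at the end of the proof of Theorem~\ref{thm: main 1} to derive strict monotonicity of $u$ from the strict positivity of $w_\lambda$.

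I do not expect a genuine obstacle here: the entire substance of the lemma is the positivity $w_\lambda>0$ for $\lambda>\mu$, which is available by construction, and the statement is merely its pointwise reformulation. The only care needed is to make sure the test point $(x',a_2)$ does not coincide with the puncture $(0',2\lambda)$ — which is guaranteed by $a_1>0$ — and, for the same reason, that the reflected point $(x',a_1)$ is never the origin, so that no ambiguity arises in evaluating $u$ there.
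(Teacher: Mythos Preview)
Your proposal is correct and follows essentially the same argument as the paper: fix two points on a vertical line, take $\lambda$ to be the midpoint of their heights, and read off strict monotonicity from $w_\lambda>0$ in $\Sigma_\lambda$, the only care being that the higher point avoids the puncture $(0',2\lambda)$, which is guaranteed by $a_1>\mu\ge0$. The paper phrases the puncture check contrapositively (if $z=0^\lambda$ then $y=0$, contradicting $y_N>\mu$), but the content is identical.
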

\begin{proof}
Let $y,z \in \{x_N > \mu\}$ with $y_N<z_N$. We aim at showing that $u(y) > u(z)$. For $\lambda:= (y_N+z_N)/2$, we claim that $z \in \Sigma_\lambda$. Once that this is shown, the desired conclusion simply follows by the fact that 
\[
u(y)-u(z) = w_\lambda(z) >0,
\]
as $\lambda>\mu$. Since $z_N>\lambda$, if $z \not \in \Sigma_\lambda$, then necessarily $z=0^\lambda$. This means that $y=0$, in contradiction with the fact that $y_N > \mu  \ge 0$. 
\end{proof}

We are ready to complete the proof of Theorem \ref{thm: radial point} by showing that $\mu=0$.

\begin{lemma}
It holds $\Lambda=(0,+\infty)$.
\end{lemma}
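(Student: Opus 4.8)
The plan is to argue by contradiction, assuming $\mu = \inf\Lambda > 0$. By continuity of the incremental quotients defining $w_\lambda$ (recall $u$ is bounded, hence continuous away from $0$ and in fact $w_\mu \geq 0$ in $\Sigma_\mu$), and then invoking the strong maximum principle (Proposition~\ref{STRONG}) applied with $H = H_\mu$ and the punctured set $\Sigma_\mu = H_\mu \setminus \{(0',2\mu)\}$, we get that either $w_\mu > 0$ in $\Sigma_\mu$ or $w_\mu \equiv 0$ in $H_\mu$. The latter would force $u((0',2\mu)) = u(0) = a$; but $\mu > 0$ means $(0',2\mu) \neq 0$, and by monotonicity (Lemma~\ref{lem: monot}) $u$ is strictly decreasing in $x_N$ on $\{x_N > \mu\} \ni (0',2\mu)$, so $u((0',2\mu)) < a$ — a contradiction. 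Hence $w_\mu > 0$ in $\Sigma_\mu$.

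Now, since $\mu = \inf\Lambda$, there are sequences $0 < \lambda_k < \mu$ with $\lambda_k \to \mu$ and points $x_k \in \Sigma_{\lambda_k}$ with $w_{\lambda_k}(x_k) < 0$. Because $w_{\lambda_k} = 0$ on $T_{\lambda_k}$ and $w_{\lambda_k} \to 0$ at infinity, each $x_k$ may be taken to be an interior negative minimum point of $w_{\lambda_k}$ in $\Sigma_{\lambda_k}$. The case $|x_k| \to +\infty$ is excluded exactly as in Lemma~\ref{lem: moving initial} (the computation showing $(-\Delta)^s w_{\lambda_k}(x_k) \leq 0$ and $c_{\lambda_k}(x_k) w_{\lambda_k}(x_k) \leq 0$, hence $w_{\lambda_k}$ constant, contradicting anti-symmetry). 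So $\{x_k\}$ is bounded and, up to a subsequence, $x_k \to \bar x \in \overline{\Sigma_\mu}$. Using $u \in \mathcal{C}^s(\R^N)$ (or at least local uniform continuity of $u$ away from $0$), the functions $w_{\lambda_k}$ converge locally uniformly to $w_\mu$, so $w_\mu(\bar x) = 0$. Since $w_\mu > 0$ throughout the open set $\Sigma_\mu$, the point $\bar x$ must lie on $\partial\Sigma_\mu = T_\mu \cup \{(0',2\mu)\}$.

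Two sub-cases must be ruled out. If $\bar x = (0',2\mu)$, the isolated removed point: here we should observe that near this point $w_\mu$ is actually continuous and positive in a punctured neighborhood in $\Sigma_\mu$, and since $(0',2\mu)$ is an interior point of $H_\mu$ with $u((0',2\mu))<a = u(0) = u((0',2\mu)^\mu)$, we get $w_\mu((0',2\mu)) = u(0) - u((0',2\mu)) = a - u((0',2\mu)) > 0$ by strict monotonicity (Lemma~\ref{lem: monot}), contradicting $w_\mu(\bar x)=0$. If instead $\bar x \in T_\mu$, so $\bar x_N = \mu$: then for $k$ large $\bar x$ is an interior point of the doubled domain $\mathrm{int}(\overline{\Sigma_{\lambda_k} \cup \Sigma_{\lambda_k}^{\lambda_k}})$ (a punctured neighborhood, but the puncture is far away since $\bar x \neq (0', 2\mu)$ forces $\bar x \neq (0',2\lambda_k)$ for $k$ large), and there interior regularity gives $w_{\lambda_k}$ uniformly bounded in $\mathcal{C}^{1,\sigma}$ near $\bar x$; interior minimality gives $\nabla w_{\lambda_k}(x_k) = 0$, hence in the limit $\nabla w_\mu(\bar x) = 0$, in particular $\partial_{x_N} w_\mu(\bar x) = 0$. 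But $w_\mu$ is an anti-symmetric nonnegative (and positive in $\Sigma_\mu$) solution vanishing on $T_\mu$, and $\bar x \in T_\mu$ is an \emph{interior} point of $H \cap \Omega'$ with $\Omega' = \R^N \setminus \{0\}$ (minus a far-away point); applying Proposition~\ref{prop: new hopf} with this $\Omega'$ we get $-\liminf_{t\to 0^+} w_\mu(\bar x - t e_N)/t < 0$, i.e. $w_\mu$ has a nonzero first-order behavior at $\bar x$, contradicting $\nabla w_\mu(\bar x) = 0$.

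The main obstacle, and the point where care is needed, is precisely handling the isolated singularity $\{(0',2\mu)\}$: one must check that the strong maximum principle, the $\mathcal{C}^{1,\sigma}$ interior estimates, and the boundary lemma (Proposition~\ref{prop: new hopf}) all apply on the punctured domain, which is legitimate because the relevant points $\bar x$ stay at positive distance from the puncture for $k$ large (since $\bar x \neq (0',2\mu)$ is handled separately by the elementary monotonicity argument). Everything else is a direct transcription of Lemmas~\ref{lem: condizione per simmetria} and~\ref{lem: moving continuation} with $G$ replaced by the single point $\{0\}$, for which there is no orthogonality or internal-tangency case (Cases~2 and~3 of Lemma~\ref{lem: moving continuation} are vacuous), so only Case~1 survives and is disposed of by Proposition~\ref{prop: new hopf} exactly as above. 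This yields $\mu = 0$, i.e. $\Lambda = (0,+\infty)$, completing the proof.
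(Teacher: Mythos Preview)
Your proof is correct and follows essentially the same approach as the paper: contradiction via $\mu>0$, strong maximum principle to get $w_\mu>0$ in $\Sigma_\mu$ (ruling out $w_\mu\equiv 0$ via Lemma~\ref{lem: monot} and $u\le a$), then extracting a limiting minimum point $\bar x\in\partial\Sigma_\mu$, eliminating $\bar x=(0',2\mu)$ by the same monotonicity argument, and eliminating $\bar x\in T_\mu$ via interior $\mathcal{C}^{1,\sigma}$ regularity and Proposition~\ref{prop: new hopf}. The only cosmetic difference is that where you infer $u((0',2\mu))<a$ directly from strict monotonicity (implicitly using $u\le a$), the paper instead assumes $u((0',2\mu))=a$ and derives $u((0',y_N))>a$ for $y_N\in(\mu,2\mu)$; these are contrapositives of the same observation.
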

\begin{proof}
By contradiction, let $\mu>0$. At first, by continuity $w_\mu \ge 0$ in $\Sigma_\mu$. 
Thus, by the strong maximum principle we have that either $w_\mu >0$ in $\Sigma_\mu$, or $w_\mu \equiv 0$ in $H_\mu$. To rule out the latter alternative, we observe that having assumed $\mu>0$, we obtain $u(0',2\mu) = a$. Thanks to the previous lemma, we infer that $u(0',x_N) > a$ whenever $x_N \in (\mu,2\mu)$, in contradiction with the maximality of $a$. 

Thus, it remains to reach a contradiction when $w_\mu > 0$ in $\Sigma_\mu$. By the definition of $\inf$, there exist sequences $0<\lambda_k<\mu$ and $x_k \in \Sigma_{\lambda_k}$ such that $\lambda_k \to \mu$ and $w_{\lambda_k}(x_k)<0$. Since $w_{\lambda_k} \ge 0$ in $\pa \Sigma_{\lambda_k}$ and tends to $0$ as $|x| \to +\infty$, it is not restrictive to assume that $x_k$ is an interior minimum points for $w_{\lambda_k}$ in $\Sigma_{\lambda_k}$. If $|x_k| \to +\infty$, we obtain a contradiction as in Lemma \ref{lem: moving initial}. Hence, up to a subsequence $x_k \to \bar x\in \overline{\Sigma_\mu}$. Notice that by uniform convergence $w_\mu(\bar x) = 0$, which forces $\bar x \in \partial \Sigma_\mu$. If $\bar x= (0',2\mu)$, this means that $u(0',2\mu) = a$, and by Lemma \ref{lem: monot} we obtain a contradiction with the fact that $u \le a$ in $\R^N$. Therefore $\bar x \in T_\mu$. This means that all the points $x_k$, and also $\bar x$, are interior points for the anti-symmetric functions $w_{\lambda_k}$ and $w_\mu$ in the sets
\begin{align*}
\R^N \setminus \left( \{0\} \cup \{(0',2\lambda_k)\}\right) = \Sigma_{\lambda_k} \cup T_{\lambda_k} \cup \Sigma_{\lambda_k}^{\lambda^k} \\
\R^N \setminus \left( \{0\} \cup \{(0',2\mu)\} \right) = \Sigma_{\mu} \cup T_{\mu} \cup \Sigma_{\mu}^{\mu},
\end{align*}
respectively. As a consequence, we can argue as in case 1) of the proof of Lemma \ref{lem: moving continuation}, deducing that for some $\rho,\gamma>0$ the sequence $\{w_k\}$ is uniformly bounded in $\mathcal{C}^{1,\gamma}(\overline{B_\rho(\bar x)})$. This entails $\mathcal{C}^1$ convergence in $B_{\rho}(\bar x)$, and by minimality $\nabla w_\mu (\bar x) = 0$, in contradiction with Proposition \ref{prop: new hopf}. 
\end{proof}

\section{Existence results}\label{sec: existence}

This section is devoted to the proof of the existence of a solution to 
\begin{equation}\label{ex problem}
\begin{cases}
(-\Delta)^s u = f(u) & \text{in $\R^N \setminus \overline{B_1}$} \\
u = a & \text{in $\overline{B_1}$},
\end{cases}
\end{equation}
satisfying all the assumptions of Theorem \ref{thm: main 1 prime}. To this aim, we recall that the critical exponent for the embedding $H^s(\R^N) \hookrightarrow L^p(\R^N)$ is defined as $2^*_s:= 2N/N-2s$. Let~$
{\mathcal{R}}^s$
be the space of functions
in~$H^s(\R^N)$
that are $u$ radial and radially decreasing with respect to the origin.
We point out that, if~$u \in {\mathcal{R}}^s$, the decay estimate
\begin{equation}\label{DDCC}
|u(x)| \le C \|u\|_{L^2(\R^N)} |x|^{-N/2} \qquad \forall x \in \R^N
\end{equation}
holds (see e.g. Lemma 2.4 in \cite{DipPalVal} for a simple proof), and this
ensures that $u(x) \to 0$ as $|x| \to +\infty$.
Moreover, it is known\footnote{The details of the easy proof
of this compactness statement can be obtained as
follows. Given a bounded family $F$ in ${\mathcal{R}}^s$
and
fixed $\epsilon>0$, we find an $\epsilon$-net for~$F$.
That is, first we use~\eqref{DDCC}
to say that for any $u\in F$,
we have that $\|u\|_{L^p(\R^N\setminus B_R)}<\epsilon/2$
if $R$ is chosen suitably large (in dependence of $\epsilon$).
Then we use local compact embeddings (see e.g.
Corollary 7.2 in \cite{MR2944369})
for the compactness in $L^p(B_R)$: accordingly, we find
$h_1, \dots h_M\in L^p(B_R)$ such that for any $u\in F$ there
exists $i\in\{ 1,\dots,M\}$ such that $\| u-h_i\|_{L^p(B_R)}
<\epsilon/2$ (of course $M$ may also depend on $\epsilon$).
We extend $h_i$ as zero outside $B_R$, and we found
an $\epsilon$-net since in this way
$\| u-h_i\|_{L^p(\R^N)}\leq \| u-h_i\|_{L^p(B_R)}+
\| u\|_{L^p(\R^N\setminus B_R)}<\epsilon$.
This shows the compactness that we need.
For a more general and comprehensive treatment of this topic,
we refer to \cite{MR683027} and Theorem 7.1 in~\cite{PRPR}.
}
that~${\mathcal{R}}^s$
compactly embeds into $L^p(\R^N)$ for every $2 < p < 
2^*_s$.

\begin{theorem}
Let $f(t):= g(t) -t$ for some $g: \R \to \R$ continuous, odd, and such that
\begin{equation}\label{ass existence}
g(t) t \le 0  \qquad \text{for every $t \in \R$}.
\end{equation}
Then there exists a radially symmetric and radially decreasing solution $u \in \mathcal{C}^s(\R^N)$ of problem \eqref{ex problem}, satisfying the additional condition $0 \le u \le a$ in $\R^N$.
\end{theorem}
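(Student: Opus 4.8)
The proof will be variational. Set $G(t):=\int_0^t g(\tau)\,d\tau$; since $g$ is odd with $g(t)t\le 0$, the primitive $G$ is even, of class $\mathcal{C}^1$, and satisfies $G\le G(0)=0$, being non-increasing on $[0,+\infty)$ and non-decreasing on $(-\infty,0]$. Fix once and for all a radial, radially non-increasing function $\phi_0\in H^s(\R^N)$ with compact support such that $\phi_0\equiv a$ on $\overline{B_1}$ and $0\le\phi_0\le a$, let $X:=\{w\in H^s(\R^N):w\equiv 0\text{ in }\overline{B_1}\}$, and consider on the affine space $\phi_0+X$ the energy
\[
J(u):=\frac12\,\mathcal{E}(u,u)+\int_{\R^N\setminus\overline{B_1}}\Big(\frac{u(x)^2}{2}-G(u(x))\Big)\,dx .
\]
Since $-G\ge 0$, the second integrand is nonnegative, so $J\ge 0$ and $J\not\equiv+\infty$ (indeed $J(\phi_0)<+\infty$ because $\phi_0$ is compactly supported). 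By construction, a critical point of $J$ on $\phi_0+X$ is a weak solution of $(-\Delta)^s u=g(u)-u=f(u)$ in $\R^N\setminus\overline{B_1}$ with Dirichlet datum $u\equiv a$ on $\overline{B_1}$, i.e. of \eqref{ex problem}.

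The plan is to minimize $J$ within the class of good competitors
\[
\mathcal{M}:=\{u\in\phi_0+X:\ 0\le u\le a\text{ in }\R^N,\ u\ \text{radial and radially non-increasing}\},
\]
and then to recover that the minimizer is a free critical point. First I would check that $\inf_{\mathcal{M}}J$ equals the infimum of $J$ over \emph{all} radial functions of $\phi_0+X$. This rests on two elementary remarks. (a) For radial $w\in\phi_0+X$, the truncation $T(w):=\min\{\max\{w,0\},a\}$ still lies in $\phi_0+X$ (it equals $a$ on $\overline{B_1}$), is the composition of $w$ with a $1$-Lipschitz map vanishing at $0$, so that $\mathcal{E}(T(w),T(w))\le\mathcal{E}(w,w)$, while $T(w)^2\le w^2$ and $G(T(w))\ge G(w)$ pointwise (by the monotonicity of $G$ recorded above); hence $J(T(w))\le J(w)$. (b) For $v\ge 0$ in $\phi_0+X$ with $0\le v\le a$, its symmetric decreasing rearrangement $v^*$ is radial and radially non-increasing, is equimeasurable with $v$, satisfies $\mathcal{E}(v^*,v^*)\le\mathcal{E}(v,v)$ by the fractional P\'olya--Szeg\H{o} inequality, and still equals $a$ on $\overline{B_1}$ (because $\sup v=a$ is attained on a set of measure $\ge|B_1|$ which, after rearrangement, is the ball of that volume, hence contains $\overline{B_1}$); since the integrals over $B_1$ are constant on $\phi_0+X$, this gives $v^*\in\mathcal{M}$ and $J(v^*)\le J(v)$. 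Combining, $(T(w))^*\in\mathcal{M}$ with $J((T(w))^*)\le J(w)$, which yields the claimed equality of infima.

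Next I would run the direct method on $\mathcal{M}$. Taking a minimizing sequence $\{u_n\}\subset\mathcal{M}$, the bounds $\mathcal{E}(u_n,u_n)\le 2J(u_n)$ and $\int_{\R^N\setminus\overline{B_1}}u_n^2\le 2J(u_n)$, together with $\int_{B_1}u_n^2=a^2|B_1|$, show that $\{u_n\}$ is bounded in $H^s(\R^N)$, hence (being radial, radially non-increasing and uniformly bounded) bounded in $\mathcal{R}^s$. By the compact embedding $\mathcal{R}^s\hookrightarrow L^p(\R^N)$ for $2<p<2^*_s$ recalled above, a subsequence converges to some $u$ strongly in $L^p$, almost everywhere, and weakly in $H^s(\R^N)$; the limit $u$ still belongs to $\mathcal{M}$. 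Weak lower semicontinuity of $\mathcal{E}(\cdot,\cdot)$ and Fatou's lemma applied to the nonnegative integrand $\tfrac{u_n^2}{2}-G(u_n)$ give $J(u)\le\liminf_n J(u_n)$, so $u$ realizes $\min_{\mathcal{M}}J$, and therefore also the minimum of $J$ over all radial functions of $\phi_0+X$. In particular $u$ is a critical point of $J$ restricted to the radial functions, and since $J$ and the affine space $\phi_0+X$ are invariant under the natural $O(N)$-action (recall $\phi_0$ is radial), Palais' principle of symmetric criticality upgrades this to: $u$ is a critical point of $J$ on $\phi_0+X$, i.e. a weak solution of \eqref{ex problem}.

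Finally I would record the remaining properties: $u$ is radial, radially non-increasing and $0\le u\le a$ in $\R^N$ by construction; being a bounded weak solution, boundary regularity for the fractional Dirichlet problem (Theorem~\ref{thm: regularity}) gives $u\in\mathcal{C}^s(\R^N)$; and since $u\in\mathcal{R}^s$, the decay estimate~\eqref{DDCC} forces $u(x)\to 0$ as $|x|\to+\infty$, so $u$ is non-constant and genuinely radially decreasing. The main obstacle throughout is the loss of compactness (and of $L^2(\R^N)$-coercivity of the natural energy) on the unbounded set $\R^N\setminus\overline{B_1}$: it is precisely this that forces one to descend to the radial, radially decreasing class, where the quoted compact embedding and the decay~\eqref{DDCC} hold, and then to recover that the constrained minimizer solves the Euler--Lagrange equation through the truncation and rearrangement comparisons together with symmetric criticality. (As a check, once a radial weak solution with $0\le u\le a$ is at hand, the two pointwise bounds can also be re-derived directly by testing the equation with $(u-a)_+$ and with $(-u)_+$, using the standard convexity inequalities for the Dirichlet form and the sign of $g$.)
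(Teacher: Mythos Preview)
Your approach is correct and follows essentially the same variational route as the paper: both minimize the same energy $J$, use the fractional P\'olya--Szeg\H{o} inequality to pass to radial non-increasing competitors, exploit the compact embedding of $\mathcal{R}^s$ into $L^p(\R^N)$ for the direct method, and invoke Theorem~\ref{thm: regularity} for the $\mathcal{C}^s$ regularity. You are in fact slightly more careful than the paper on one point: you insert the truncation $T(w)=\min\{\max\{w,0\},a\}$ \emph{before} rearranging, which guarantees that the rearranged competitor still equals $a$ on $\overline{B_1}$ (without truncation, a competitor exceeding $a$ on a set of positive measure could, after rearrangement, violate the boundary constraint).

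The only point worth streamlining is your appeal to Palais' symmetric criticality. It is not needed: your truncation step~(a) works verbatim for \emph{every} $w\in\phi_0+X$, not just radial ones, so the very same argument yields $\inf_{\mathcal{M}}J=\inf_{\phi_0+X}J$ (this is how the paper proceeds, phrasing it as ``the minimizing sequence can be taken nonnegative, radial and radially non-increasing''). Hence your constrained minimizer $u$ is already a global minimizer on the full affine space, and the Euler--Lagrange equation follows directly by testing with $\varphi\in C_c^\infty(\R^N\setminus\overline{B_1})$ (differentiability being unproblematic since $u\in L^\infty$). This avoids any technical worry about whether $J$ is $C^1$ on the whole of $H^s$ in the absence of growth assumptions on $g$.
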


\begin{proof}
We denote $\Omega = \R^N \setminus \overline{B_1}$, and set
\[
X:= \left\{ u \in H^s(\Omega): \text{$u = a$ a.e. in $B_1$} \right\}.
\]
Let $J:X \to \R$ be defined by
\[
J(u) := \frac{1}{2} \int_{\R^{2N}} \frac{(u(x)-u(y))^2}{|x-y|^{N-2s}}\,dx dy + \frac{1}{2}\int_{\Omega} u^2 - \int_{\Omega} G(u),
\]
where $G$ denotes a primitive of $g$. It is not difficult to check that if $u \in X$ is a minimizer for $J$, then $u$ solves \eqref{ex problem}, and hence in the following we aim at proving the existence of such a minimizer. Let $c:= \inf_{X} J$. Since $G \le 0$ by assumption \eqref{ass existence}, we have that
\[
J(u) \ge \frac{1}{2} \int_{\R^{2N}} \frac{(u(x)-u(y))^2}{|x-y|^{N-2s}}\,dx dy + \frac{1}{2}\int_{\Omega} u^2 = \frac{1}{2} \|u\|_{H^s(\Omega)}^2,
\]
which implies that $J$ is coercive on $H^s(\Omega)$. Let $\{u_n\}$ be a minimizing sequence for $c$. Since $g$ is odd we can suppose that $u_n \ge 0$ for every $n$ (recall that, if $u \in H^s(\Omega)$, also $|u| \in H^s(\Omega)$), and thanks to the fractional Polya-Szego inequality (see \cite{Park}) it is not restrictive to assume that each $u_n$ is radially symmetric and radially non-increasing with respect to $0$. Thus $\{u_n\}$ is a bounded sequence in ${\mathcal{R}}^s$, so that by compactness we can extract a subsequence of $\{u_n\}$ (still denoted $\{u_n\}$), and find a function $u \in {\mathcal{R}}^s$, such that $u_n \to u$ weakly in $H^s(\Omega)$ and a.e. in $\R^N$. Notice that $u \in X$. Now by weak lower semi-continuity we infer
\[
c \le J(u) \le \liminf_{n \to +\infty} J(u_n) = c,
\]
namely $u$ is a minimizer for $c$, and hence a solution of \eqref{ex problem}. By convergence, it is radially symmetric and radially non-increasing with respect to $0$, and is nonnegative. Moreover, 
\[
\begin{cases}
 (-\Delta)^s u = g(u) -u \le 0 & \text{in $\R^N \setminus \overline{B_1}$}\\
u \le a & \text{in $\overline{B_1}$} \\
u(x) \to 0& \text{as $|x| \to +\infty$},
\end{cases}
\]  
which, as in the proof of Corollary \ref{corol: subharmonicity}, implies $u \le a$ in $\R^N$. Finally, by Theorem \ref{thm: regularity} it results $u \in \mathcal{C}^s(\R^N)$.
\end{proof}

%

\appendix

\section{Regularity of weak solutions in unbounded domains}

In this section we discuss the regularity of weak solutions of
\begin{equation}\label{pb: regularity}
\begin{cases}
(-\Delta)^s w= g & \text{in $\Omega$} \\
w = 0 & \text{in $\R^N \setminus \Omega$},
\end{cases}
\end{equation}
where $\Omega$ is an arbitrary open set of $\R^N$, neither necessarily bounded, nor necessarily connected. When $\Omega$ is bounded domain, the regularity of weak solutions has been studied in \cite{SeR-O, SerValViscosity}, and in what follows we show how to adapt the arguments therein to deal with the general case considered here. Notice that if $u$ is a bounded solution of the first two equations in \eqref{pb: overdet}, then the difference $w=a-u$ solves a problem of type \eqref{pb: regularity}.

\begin{theorem}\label{thm: regularity}
Let $\Omega$ be an arbitrary open set of class $\mathcal{C}^{1,1}$, and let $g \in \mathcal{C}(\R^N) \cap L^\infty(\R^N)$. If $w \in L^\infty(\R^N)$ is a bounded weak solution of \eqref{pb: regularity}, then $w \in \mathcal{C}^s(\R^N) \cap \mathcal{C}^{1,\sigma}(\Omega)$ for some $\sigma \in (0,1)$, and the integral representation \eqref{integral representation} holds point-wise. Moreover, if $\delta$ denotes the distance from the boundary of $\Omega$, then $w/\delta^s \in \mathcal{C}^{0,\gamma}(\overline{\Omega})$ for some $\gamma \in (0,1)$.
\end{theorem}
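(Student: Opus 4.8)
The plan is to reduce the global assertion to purely local statements and then invoke the interior and boundary regularity theory for the fractional Dirichlet problem on bounded $\mathcal{C}^{1,1}$ domains from \cite{SeR-O,SerValViscosity}, checking that every quantitative ingredient there is either local or depends on the exterior data only through the finite quantities $\|w\|_{L^\infty(\R^N)}$ and $\|g\|_{L^\infty(\R^N)}$. (By \cite{SerValViscosity} the weak solution $w$ is also a viscosity solution, which legitimizes the pointwise manipulations below.)

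The first step is to absorb the nonlocal tail. Fixing $x_0\in\R^N$ and $r>0$, for $x\in B_{r/2}(x_0)$ one has
\[
\left| \int_{\R^N\setminus B_r(x_0)} \frac{w(y)}{|x-y|^{N+2s}}\,dy \right| \le C(N,s)\, r^{-2s}\,\|w\|_{L^\infty(\R^N)},
\]
so that, splitting $(-\Delta)^s w$ into its singular part on $B_r(x_0)$ and this bounded remainder, $w$ solves in $B_r(x_0)$ a fractional equation whose right-hand side is still in $\mathcal{C}\cap L^\infty$, with norm controlled by $\|g\|_{L^\infty(\R^N)}+C r^{-2s}\|w\|_{L^\infty(\R^N)}$. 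This is the only point at which the possible unboundedness or disconnectedness of $\Omega$ could enter, and it is harmless. Interior regularity is then immediate: for $x_0\in\Omega$ pick $r$ with $B_{2r}(x_0)\Subset\Omega$; the interior estimates of \cite{SeR-O,SerValViscosity} give $w\in\mathcal{C}^{\beta}(\overline{B_{r/2}(x_0)})$ for every $\beta<2s$, which already yields $\mathcal{C}^{1,\sigma}$ when $2s>1$ and, in general, is raised to $\mathcal{C}^{1,\sigma}$ for some $\sigma=\sigma(N,s)\in(0,1)$ by a finite Hölder bootstrap (in the application $g=-f(a-w)$ with $f$ locally Lipschitz, so $g$ inherits the local Hölder modulus of $w$ at each step). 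A covering argument gives $w\in\mathcal{C}^{1,\sigma}(\Omega)$.

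For the behaviour at $\partial\Omega$ I would localize near a boundary point: since $\Omega$ is (uniformly) of class $\mathcal{C}^{1,1}$, a neighbourhood of any $x_0\in\partial\Omega$ is, after a $\mathcal{C}^{1,1}$ change of variables, a piece of a bounded $\mathcal{C}^{1,1}$ domain, so the barrier construction of \cite{SeR-O} applies: comparing $w$ from above and below with suitable multiples of $\delta^s$ (a function for which $(-\Delta)^s(\delta^s\phi)$ stays bounded near $\partial\Omega$, $\phi$ a smooth cut-off) gives $|w(x)|\le C\,\delta(x)^s$ near $\partial\Omega$. Together with the interior Hölder bound, with $w\equiv0$ on $\R^N\setminus\Omega$, and with $\delta(x)\le|x-y|$ whenever $y\notin\Omega$, the standard ``$\mathcal{C}^s$ up to the boundary'' argument of \cite{SeR-O} upgrades this to $w\in\mathcal{C}^s(\R^N)$; the exponent $s$ is optimal because of the $\delta^s$ boundary profile. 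Finally, $w/\delta^s\in\mathcal{C}^{0,\gamma}(\overline{\Omega})$ is, away from $\partial\Omega$, just $w\in\mathcal{C}^{1,\sigma}$ divided by the positive $\mathcal{C}^{1,1}$ function $\delta^s$, while near $\partial\Omega$ it follows, after the same flattening, from the corresponding theorem of \cite{SeR-O}.

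\textbf{Main obstacle.} The delicate point is this last one: one must check that none of the compactness/blow-up steps in the proof of $w/\delta^s\in\mathcal{C}^{0,\gamma}$ secretly uses global boundedness of $\Omega$ or a global a priori estimate. The remedy is exactly the first step above, which shows that, once localized, the equations are of precisely the type treated in the bounded case, with all constants depending only on $N$, $s$, the local $\mathcal{C}^{1,1}$ character of $\partial\Omega$, and the finite numbers $\|w\|_{L^\infty(\R^N)}$ and $\|g\|_{L^\infty(\R^N)}$; since the conclusions $\mathcal{C}^s(\R^N)$, $\mathcal{C}^{1,\sigma}(\Omega)$ and $w/\delta^s\in\mathcal{C}^{0,\gamma}(\overline{\Omega})$ are qualitative, no uniform global estimate is actually needed.
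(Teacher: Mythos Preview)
Your approach is in the same spirit as the paper's---both reduce to the bounded-domain theory of \cite{SeR-O,SerValViscosity}---but the paper's argument is considerably more economical and avoids two weak points in your write-up.

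The paper observes that once $w\in\mathcal{C}(\R^N)$, one can follow \cite[Theorem~1]{SerValViscosity} (or \cite[Remark~2.11]{SeR-O}) verbatim to pass from weak to viscosity solution and then read off \emph{all} three regularity conclusions from the existing theory. Hence the only thing to prove is continuity: interior continuity is \cite[Proposition~5]{SerValViscosity}, and for continuity at a boundary point $x_0$ the paper uses the \emph{exterior ball} directly---placing a ball $B_{\rho_0}\subset\R^N\setminus\Omega$ tangent at $x_0$ and scaling/translating the explicit supersolution of \cite[Lemma~2.6]{SeR-O} (supported outside that ball) to get $|w|\le C\delta^s$ near $x_0$. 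No tail-splitting, no bootstrap, no boundary flattening are needed.

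Two points in your sketch deserve correction. First, your opening parenthetical invokes \cite{SerValViscosity} to say that the weak solution is already a viscosity solution; but that equivalence requires continuity of $w$, which is precisely what has to be proved, so as written the argument is circular. Second, the phrase ``after a $\mathcal{C}^{1,1}$ change of variables'' is problematic: the fractional Laplacian is not invariant (nor does it transform in any simple way) under diffeomorphisms, so one cannot literally flatten the boundary. Fortunately you do not need it---the barrier of \cite{SeR-O} is built from the exterior sphere condition in the original coordinates, exactly as the paper does. With these two fixes your localization-and-barrier outline becomes correct, but it is then essentially the paper's proof with extra scaffolding.
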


We recall that the definition of weak solution has been given in Section \ref{sec: preliminaries}.
\begin{proof}
We wish to prove that $w$ is a viscosity solution of \eqref{pb: regularity}, so that the regularity theory for viscosity solutions, developed in \cite{CafSil, CafSil2, BRR}, gives the desired result. 

We show that $w \in \mathcal{C}(\R^N)$. Once that this is done, the proof can be concluded following the argument in \cite[Theorem 1]{SerValViscosity} or \cite[Remark 2.11]{SeR-O}. 

By Proposition 5 in \cite{SerValViscosity}, $w \in \mathcal{C}(\Omega)$, and therefore it remains to rule out the possibility that $w$ has a discontinuity on $\pa \Omega$. To this aim, we argue by contradiction assuming that there exists a point of discontinuity $x_0 \in \partial \Omega$ for $w$. Let $\varphi \in H^s_{\loc}(\R^N)$ be such that 
\[
\begin{cases}
(-\Delta)^s \varphi \ge 1 & \text{in $B_4 \setminus B_1$} \\
\varphi \equiv 0 & \text{in $\overline{B_1}$} \\
0 \le \varphi \le C(|x|-1)^s & \text{in $B_4 \setminus B_1$} \\
1 \le \varphi \le C & \text{in $\R^N \setminus B_4$}
\end{cases}
\]
for some $C>0$. The existence of $\varphi$ has been proved in \cite[Lemma 2.6]{SeR-O}. Since $\Omega$ satisfies an exterior sphere condition, there exists a ball $B_{\rho_0} \subset \R^N \setminus \Omega$ which is tangent to $\Omega$ in $x_0$. By scaling and translating $\varphi$, we find an upper barrier for $w$ in $B_{4 \rho_0} \setminus \overline{B_{\rho_0}}$, vanishing in $\overline{B_{\rho_0}}$ (here we use the fact that both $g$ and $w$ are in $L^\infty(\R^N)$). Arguing in the same way on $-w$, we deduce that $|w|<C \delta^s$ in $\Omega \cap B_{4 \rho_0}$, yielding in particular $w(x) \to 0$ as $x \to x_0$. This implies that $w$ is continuous in $x_0$, a contradiction. 

Therefore $w \in \mathcal{C}(\R^N)$ and, as observed, the desired regularity results follow. As far as the validity of the integral representation is concerned, we first notice that any $L^\infty(\R^N)$ weak solution is also a distributional solution, according to the definition in Section 2 in \cite{Silv}. We can then apply Proposition 2.8 therein to deduce that, in case $s>1/2$, the exponent $\sigma$ is strictly larger than $2s-1$ (we observe that, while in \cite{Silv} the result is stated in the whole space $\R^N$, the proof is local, and then can be adapted to our setting with minor changes). As a consequence, Proposition 2.4  in \cite{Silv} yields the validity of the integral representation \eqref{integral representation}.
\end{proof}

\end{document}